\newcommand{\cone}{\operatorname{cone}}
\renewcommand{\P}{\mathbf{P}}
\newcommand{\Q}{\mathbf{Q}}
\newcommand{\Z}{\mathbf{Z}}
\newcommand{\sA}{\mathcal{A}}
\newcommand{\sF}{\mathcal{F}}
\newcommand{\sG}{\mathcal{G}}
\newcommand{\sH}{\mathcal{H}}
\newcommand{\sI}{\mathcal{I}}
\newcommand{\sK}{\mathcal{K}}
\newcommand{\sX}{\mathcal{X}}
\newcommand{\bG}{\mathbb{G}}
\renewcommand{\phi}{\varphi}
\renewcommand{\epsilon}{\varepsilon}
\newcommand{\Spec}{\operatorname{Spec}}
\newcommand{\Ker}{\operatorname{Ker}}
\newcommand{\Coker}{\operatorname{Coker}}
\newcommand{\IM}{\operatorname{Im}}
\newcommand{\tors}{{\operatorname{tors}}}
\newcommand{\DM}{\operatorname{\mathbf{DM}}}
\newcommand{\Br}{\operatorname{Br}}
\newcommand{\Hom}{\operatorname{Hom}}
\newcommand{\End}{\operatorname{End}}
\newcommand{\Pic}{\operatorname{Pic}}
\newcommand{\NS}{\operatorname{NS}}
\newcommand{\Griff}{\operatorname{Griff}}
\newcommand{\Nis}{{\operatorname{Nis}}}
\newcommand{\uHom}{\operatorname{\underline{Hom}}}
\newcommand{\Ext}{\operatorname{Ext}}
\newcommand{\Tor}{\operatorname{Tor}}
\newcommand{\Ab}{\operatorname{\mathbf{Ab}}}
\newcommand{\car}{\operatorname{char}}
\newcommand{\alg}{{\operatorname{alg}}}
\newcommand{\ind}{{\operatorname{ind}}}
\newcommand{\num}{{\operatorname{num}}}
\newcommand{\nr}{{\operatorname{nr}}}
\newcommand{\Sm}{\operatorname{\mathbf{Sm}}}
\newcommand{\NST}{{\operatorname{\mathbf{NST}}}}
\newcommand{\EST}{{\operatorname{\mathbf{EST}}}}
\newcommand{\ES}{{\operatorname{\mathbf{ES}}}}
\newcommand{\HI}{\operatorname{\mathbf{HI}}}
\newcommand{\et}{{\text{\rm \'et}}}
\renewcommand{\o}{{\text{\rm o}}}
\newcommand{\eff}{{\text{\rm eff}}}
\newcommand{\by}{\xrightarrow}
\newcommand{\yb}{\xleftarrow}
\newcommand{\iso}{\by{\sim}}
\newcommand{\osi}{\yb{\sim}}
\newcommand{\inj}{\hookrightarrow}
\newcommand{\Inj}{\lhook\joinrel\longrightarrow}
\newcommand{\surj}{\rightarrow\!\!\!\!\!\rightarrow}
\newcommand{\colim}{\varinjlim}
\renewcommand{\lim}{\varprojlim}
\renewcommand{\qed}{\hfill $\Box$\medskip}
\newcounter{spec}
\newenvironment{thlist}{\begin{list}{\rm{(\roman{spec})}}%
{\usecounter{spec}\labelwidth=20pt\itemindent=0pt\labelsep=10pt}}%
{\end{list}}%
\newtheorem{Th}{Theorem}
\newtheorem{thm}{Theorem}[section]
\newtheorem{lemma}[thm]{Lemma}
\newtheorem{prop}[thm]{Proposition}
\theoremstyle{definition}
\newtheorem{rks}[thm]{Remarks}
\numberwithin{equation}{section}
\begin{document}
\title{The derived functors of unramified cohomology}
\author{Bruno Kahn}
\address{Institut de Math\'ematiques de Jussieu\\ Case 247\\ 4 Place Jussieu\\75252 Paris Cedex 05\\France}
\email{bruno.kahn@imj-prg.fr}
\author{R. Sujatha}
\address{Department of Mathematics, University of British Columbia\\ Vancouver\\Canada
V6T1Z2}
\email{sujatha@math.ubc.ca}
\date{September 15, 2017}
\begin{abstract} We study the first ``derived functors of unramified cohomology" in the sense of \cite{birat-tri}, applied to the sheaves $\bG_m$ and $\sK_2$. We find interesting connections with classical cycle-theoretic invariants of smooth projective varieties, involving notably a version of the Griffiths group and the group of  indecomposable $(2,1)$-cycles.
\end{abstract}
\subjclass[2010]{19E15, 14E99}
\thanks{The first author acknowledges the support of Agence Nationale de la Recherche (ANR) under reference ANR-12-BL01-0005 and the second author  that of NSERC Grant 402071/2011.} 
\maketitle

\tableofcontents

\section*{Introduction} To a perfect field $F$, Voevodsky associates in \cite{voetri} a \emph{triangulated category of (bounded above) effective motivic complexes} $\DM_-^\eff(F)=\DM_-^\eff$. In \cite{birat-tri}, we rather worked with the unbounded version $\DM^\eff$. We introduced a \emph{triangulated category of birational motivic complexes} $\DM^\o$, and constructed a triple of adjoint functors
\[\DM^\eff\begin{smallmatrix}\overset{R_\nr}{\longrightarrow}\\\overset{i^\o}{\longleftarrow}\\\overset{\nu_{\le 0}}{\longrightarrow} \end{smallmatrix}\DM^\o\]
with $i^\o$ fully faithful. Via $i^\o$, the homotopy $t$-structure of $\DM^\eff$ induces a $t$-structure on $\DM^\o$ (also called the homotopy $t$-structure), and the functors $\nu_{\le 0}$, $i^\o$ and $R_\nr$ are respectively right exact, exact and left exact.

The heart of $\DM^\eff$ is the abelian category $\HI$ of \emph{homotopy invariant Nisnevich sheaves with transfers} (see \cite{voetri}). The heart of $\DM^\o$ is the thick subcategory $\HI^\o\subset \HI$ of \emph{birational sheaves}: an object $\sF\in \HI$ lies in $\HI^\o$ if and only if it is locally constant for the Zariski topology.

In \cite{birat-tri} we also started to study the right adjoint $R_\nr$. Let 
$R^0_\nr=\sH^0\circ R_\nr:\HI\to \HI^\o$ be the induced functor. We proved that $R^0_\nr$ is given by the formula $R^0_\nr\sF=\sF_\nr$, where for a homotopy invariant sheaf $\sF\in \HI$, $\sF_\nr$ is defined by
\begin{equation}\label{eq1} \tag{1}
\sF_\nr(X) =\Ker\left(\sF(K)\to \prod_v \sF_{-1}(F(v))\right).
\end{equation}

Here $X$ is a smooth connected $F$-variety, $K$ is its function field, $v$ runs through all divisorial discrete valuations on $K$ trivial on $F$, with residue field $F(v)$, and $\sF_{-1}$ denotes the contraction of $\sF$ (see \cite{voepre} or \cite[Lect. 23]{mvw}). Thus $R^0_\nr \sF$ is the \emph{unramified part} of $\sF$.

Here is the example which connects the above to the classical situation of unramified cohomology. Let $i\ge 0$, $n\in\Z$ and let $m$ be an integer invertible in $F$. Then the Nisnevich sheaf $\sF=\sH^i_\et(\mu_m^{\otimes n})$ associated to the presheaf 
\[U\mapsto H^i_\et(U,\mu_m^{\otimes n})\]
defines an object of $\HI$, and $R^0_\nr\sF$ is the usual unramified cohomology \cite{ct}.

But the functor $R_\nr$ contains more information: for a general sheaf $\sF\in \HI$, the birational sheaves
\[R^q_\nr \sF =\sH^q(R_\nr\sF)\]
need not be $0$ for $q>0$. Can we compute them, at least in some cases?

In this paper, we try our hand at the simplest examples: $\sF=\bG_m(=\sK_1)$ and $\sF=\sK_2$. We cannot compute explicitly further than $q=2$, except for varieties of dimension $\le 2$; but this already yields interesting connections with other birational invariants. For simplicity, \emph{we restrict to the case where $F$ is algebraically closed}; throughout this paper, the cohomology we use is Nisnevich cohomology. The main results are the following:

\begin{Th}\label{t2.1} Let $X$ be a connected smooth projective $F$-variety. Then
\begin{thlist}
\item $R^0_\nr\bG_m(X) =F^*$.
\item $R^1_\nr\bG_m(X)\iso \Pic^\tau(X)$.
\item There is a short exact sequence
\[0\to D^1(X)\to R^2_\nr\bG_m(X)\to \Hom(\Griff_1(X),\Z)\to 0.\] 
\item For $q\ge 3$, we have short exact sequences
\begin{multline}\label{eq4.5}
0 \to \Ext_\Z(\NS_1(X,q-3),\Z) \to R_\nr^q\bG_m(X)\\
 \to \Hom_\Z(\NS_1(X,q-2),\Z) \to 0.
\end{multline}
\end{thlist}
Here the notation is as follows: $\Pic^\tau(X)$ is the group of cycle classes in $\Pic(X)=CH^1(X)$ which are numerically equivalent to $0$. We write $\Griff_1(X)=\Ker\left(A_1^\alg(X)\to N_1(X)\right)$, where $A_1^\alg(X)$ (resp. $N_1(X)$) denotes the group of $1$-cycles on $X$ modulo algebraic (resp. numerical) equivalence, and 
\[D^1(X)=\Coker\left(N^1(X)\to \Hom(N_1(X),\Z)\right)\]
where $N^1(X)=\Pic(X)/\Pic^\tau(X)$ and the map is induced by the intersection pairing. Finally, the groups $\NS_1(X,r)$ are those defined by Ayoub and Barbieri-Viale in \cite[3.25]{abv}.
\end{Th}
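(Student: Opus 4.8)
The plan is to reduce everything to a computation in $\DM^\eff$ via the adjunction $\nu_{\le 0}\dashv i^\o$ and then to run a single long exact sequence. First I would record that for a birational sheaf $\sG\in\HI^\o$ and a smooth projective $X$ one has $H^s_\Nis(X,i^\o\sG)=0$ for $s>0$: over an algebraically closed field birational sheaves are essentially constant, hence acyclic on smooth projective varieties. Granting this, the hypercohomology spectral sequence for $i^\o R_\nr\bG_m$ degenerates and yields
\[R^q_\nr\bG_m(X)=\Hom_{\DM^\eff}(M(X),i^\o R_\nr\bG_m[q]).\]
The counit of the adjunction $i^\o\dashv R_\nr$ gives a distinguished triangle $i^\o R_\nr\bG_m\to\bG_m\to C\to i^\o R_\nr\bG_m[1]$ in $\DM^\eff$, where $C$ lies in the right orthogonal $(\DM^\o)^\perp$ of the birational motives, i.e.\ in the ``higher weight'' part of $\bG_m=\Z(1)[1]$ (the part built from the twists $\Z(j)$, $j\ge 2$). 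Applying $\Hom_{\DM^\eff}(M(X),-[q])$ to this triangle produces the long exact sequence that drives the whole proof.

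Next I would compute the main term. Since $\Z(1)=\bG_m[-1]$ and $\bG_m$ has motivic (Nisnevich) cohomology concentrated in degrees $0,1$, one gets $\Hom_{\DM^\eff}(M(X),\bG_m[q])=H^{q+1}(X,\Z(1))$, equal to $F^*$ for $q=0$, to $\Pic(X)$ for $q=1$, and to $0$ otherwise. Feeding this into the long exact sequence gives (i) (together with the vanishing $\Hom_{\DM^\eff}(M(X),C[0])=0$), shows that for $q\ge 3$ the connecting maps are isomorphisms
\[R^q_\nr\bG_m(X)\iso\Hom_{\DM^\eff}(M(X),C[q-1]),\]
and exhibits $R^1_\nr\bG_m(X)$ and $R^2_\nr\bG_m(X)$ as, respectively, the kernel and the cokernel of a single map $\phi\colon\Pic(X)\to\Hom_{\DM^\eff}(M(X),C[1])$.

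The crux is the identification of $\Hom_{\DM^\eff}(M(X),C[j])$ with cycle-theoretic data. Here I would invoke the duality between $\bG_m=\Z(1)$ (codimension-one, Picard) information and $1$-dimensional cycles (Albanese), lifted to $\DM^\eff$: the object $C$ should be dual, in the relevant range, to the motivic complex $K_\bullet(X)$ of Ayoub--Barbieri-Viale, whose homology is $H_i(K_\bullet(X))=\NS_1(X,i-1)$. Concretely the goal is to prove $\Hom_{\DM^\eff}(M(X),C[j])\cong H^j(R\Hom_\Z(K_\bullet(X),\Z))$. Because $\Z$ has global dimension $1$, the universal coefficient spectral sequence collapses to the two-term short exact sequences
\[0\to\Ext_\Z(\NS_1(X,j-2),\Z)\to\Hom_{\DM^\eff}(M(X),C[j])\to\Hom_\Z(\NS_1(X,j-1),\Z)\to 0,\]
which, after the substitution $j=q-1$, is exactly (iv) for $q\ge 3$. \emph{This identification is the main obstacle}: it demands both the precise comparison of $C$ with the Ayoub--Barbieri-Viale construction and the degeneration of the duality/universal-coefficient spectral sequence, and it is here that the passage from codimension one to dimension one really takes place.

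It remains to treat the low degrees $q=1,2$, where the groups $\NS_1(X,r)$ degenerate to classical ones and the clean formula fails. I would show that in degree $j=1$ the target is the algebraic-equivalence group $\Hom_\Z(A_1^\alg(X),\Z)$ and that $\phi$ is induced by the intersection pairing $\Pic(X)\times A_1^\alg(X)\to\Z$. Since intersection numbers are invariant under algebraic equivalence, $\phi$ factors through $\Hom_\Z(N_1(X),\Z)$, so its kernel is the group of numerically trivial divisors, giving $R^1_\nr\bG_m(X)=\Pic^\tau(X)$, which is (ii). For (iii) I would use that $N_1(X)$ is free of finite rank, so that dualizing $0\to\Griff_1(X)\to A_1^\alg(X)\to N_1(X)\to 0$ yields
\[0\to\Hom_\Z(N_1(X),\Z)\to\Hom_\Z(A_1^\alg(X),\Z)\to\Hom_\Z(\Griff_1(X),\Z)\to 0.\]
Computing $R^2_\nr\bG_m(X)=\Coker\phi$ through this filtration separates off $D^1(X)=\Coker(N^1(X)\to\Hom_\Z(N_1(X),\Z))$ as the subobject and $\Hom_\Z(\Griff_1(X),\Z)$ as the quotient, which is precisely the short exact sequence (iii).
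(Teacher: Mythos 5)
Your outline reproduces the architecture of the paper's own proof: your long exact sequence is exactly Proposition \ref{p8.5} in adjoint form (by adjunction, $\Hom_{\DM^\eff}(M(X),C[j])\simeq \DM^\eff(\nu^{\ge 1}M(X),\bG_m[j])$ with $\nu^{\ge 1}M(X)=\uHom(\Z(1),M(X))(1)$, and your acyclicity claim for birational sheaves is Lemma \ref{l14.0}), while your endgame --- universal coefficients, $\NS_1(X,0)=A_1^\alg(X)$, the intersection pairing, and the diagram chase using freeness of $N_1(X)$ --- is also the paper's. The problem is that the two load-bearing steps, which you yourself flag, are only announced, and the hints you give for them would not work as stated.

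First, the identification $\Hom_{\DM^\eff}(M(X),C[j])\cong H^j\bigl(R\Hom_\Z(K_\bullet(X),\Z)\bigr)$ is asserted via a ``duality'' between $C$ and an Ayoub--Barbieri-Viale complex, but any attempt to identify $C$ itself is circular: the cohomology sheaves of $C$ are (extensions built from) precisely the unknown sheaves $R^q_\nr\bG_m$ you are trying to compute, so $C$ is not ``the part built from $\Z(j)$, $j\ge 2$'' in any usable a priori sense. The non-circular mechanism, which is the paper's key idea, is to move the cone across the adjunction: $\Hom(M(X),C[j])\simeq\DM^\eff(\nu^{\ge 1}M(X),\bG_m[j])$, then apply Voevodsky's cancellation theorem to strip the Tate twist, getting $\DM^\eff(\uHom(\Z(1)[2],M(X)),\Z[j-1])$, and finally note that since $\Z[j-1]\in d_{\le 0}\DM^\eff\simeq D(\Ab)$, the Ayoub--Barbieri-Viale adjunction $L\pi_0\dashv j$ converts this into $D(\Ab)(L\pi_0\uHom(\Z(1)[2],M(X)),\Z[j-1])$, where universal coefficients applies and $\NS_1(X,s)=H_s(L\pi_0\uHom(\Z(1)[2],M(X)))$ by definition. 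Your auxiliary vanishings ($\Hom(M(X),C[0])=0$, and the absence of an $\Ext$ term in degree $j=1$) also need an argument: they come from $\NS_1(X,s)=0$ for $s<0$, which uses right $t$-exactness of $L\pi_0$ together with $\uHom(\Z(1)[2],M(X))\in(\DM^\eff)^{\le 0}$ (Proposition \ref{p8.5a}). Second, the statement that $\phi$ is induced by the intersection pairing is again only a promise (``I would show''); this is the paper's Lemma \ref{l8.8}, and it is a genuine argument --- one must compare the map coming from $\nu^{\ge 1}M(X)\to M(X)$ with the $\sH^0$-truncation of the pairing $\uHom(M(X),\Z(1)[2])\otimes\uHom(\Z(1)[2],M(X))\to\Z[0]$, i.e.\ with $\underline{CH}^1(X)\otimes\underline{CH}_1(X)\to\Z$, and check injectivity of the relevant evaluation map. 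Without it, neither (ii) nor (iii) follows. Once these two steps are supplied, the remainder of your argument (the isomorphisms for $q\ge 3$, the kernel/cokernel description in degrees $1,2$, and the chase yielding $D^1(X)$ and $\Hom(\Griff_1(X),\Z)$) is correct and coincides with the paper's.
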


Note that $D^1(X)$ is a finite group since  $N_1(X)$ is finitely generated.

After Colliot-Th\'el\`ene complained that there was no unramified Brauer group in sight, we tried to invoke it by considering
\[\bG_m^\et = R\alpha_*\alpha^*\bG_m\]
where $\alpha$ is the projection of the \'etale site on smooth $k$-varieties onto the corresponding Nisnevich site. There is a natural map $\bG_m\to \bG_m^\et$, and

\begin{Th}\label{t2.1bis} The map $R^q_\nr \bG_m\to R^q_\nr \bG_m^\et$ is an isomorphism for $q\le 1$, and for $q=2$ there is an exact sequence for any smooth projective $X$:
\[0\to R^2_\nr \bG_m(X)\to R^2_\nr \bG_m^\et(X)\to \Br(X). \]
\end{Th}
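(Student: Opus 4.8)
The strategy is to analyse the cone $C=\cone(\bG_m\to\bG_m^\et)$ and to propagate information through the long exact sequence obtained by applying the triangulated functor $R_\nr$. The first step is to compute the cohomology sheaves of $\bG_m^\et=R\alpha_*\alpha^*\bG_m$. By Hilbert's Theorem 90, $\sH^0(\bG_m^\et)=\bG_m$ and the map $\bG_m\to\bG_m^\et$ is an isomorphism on $\sH^0$; the sheaf $\sH^1(\bG_m^\et)$ is the Nisnevich sheafification of $U\mapsto H^1_\et(U,\bG_m)=\Pic(U)$, which vanishes since the Picard group of a Henselian local ring is trivial; and $\sH^2(\bG_m^\et)$ is the sheafification of $U\mapsto H^2_\et(U,\bG_m)=\Br(U)$, with stalk $\Br(K)$ at a function field $K$. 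From the long exact sequence of cohomology sheaves it then follows that $C$ is concentrated in degrees $\ge 2$ and that $\sH^2(C)\iso R^2\alpha_*\alpha^*\bG_m$.

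Next I would apply $R_\nr$ to the triangle $\bG_m\to\bG_m^\et\to C\to\bG_m[1]$. Because $R_\nr$ is left $t$-exact, there is a hypercohomology spectral sequence
$$E_2^{p,q}=R^p_\nr\bigl(\sH^q(C)\bigr)\Rightarrow R^{p+q}_\nr C.$$
Since $\sH^q(C)=0$ for $q\le 1$, this yields $R^j_\nr C=0$ for $j\le 1$, while in total degree $2$ the only surviving term gives an edge isomorphism $R^2_\nr C\iso R^0_\nr\sH^2(C)=(R^2\alpha_*\alpha^*\bG_m)_\nr$. Substituting into the long exact sequence of the triangle at once gives $R^q_\nr\bG_m\iso R^q_\nr\bG_m^\et$ for $q\le 1$, and, using $R^1_\nr C=0$, the exact sequence
$$0\to R^2_\nr\bG_m(X)\to R^2_\nr\bG_m^\et(X)\to (R^2\alpha_*\alpha^*\bG_m)_\nr(X).$$

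It then remains to identify $(R^2\alpha_*\alpha^*\bG_m)_\nr(X)$ with $\Br(X)$. Here formula \eqref{eq1} computes this group as the kernel of a residue map $\Br(K)\to\prod_v (R^2\alpha_*\alpha^*\bG_m)_{-1}(F(v))$, where $K=F(X)$; the point is that the contraction $(R^2\alpha_*\alpha^*\bG_m)_{-1}$ is the sheaf $F(v)\mapsto H^1_\et(F(v),\Q/\Z)$ and the induced map is the usual Brauer residue, so that the kernel is exactly the unramified Brauer group $\Br_\nr(K/F)$. Since $X$ is smooth and projective, purity for the Brauer group identifies $\Br(X)$ with the classes in $\Br(K)$ unramified at every codimension-one point of $X$, and the valuative criterion of properness shows that each divisorial valuation of $K/F$ is centred on $X$; together these give $\Br_\nr(K/F)\iso\Br(X)$, which furnishes the final arrow.

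The step I expect to be the main obstacle is this last identification: one must verify that the abstract contraction appearing in \eqref{eq1} really recovers the classical residue on the Brauer group (equivalently, compute $(R^2\alpha_*\alpha^*\bG_m)_{-1}$ explicitly), and then invoke purity to pass from ``unramified at the codimension-one points of $X$'' to ``unramified at all divisorial valuations''. A subsidiary point, needed before the spectral sequence can even be formed, is to know that $\bG_m^\et$ lies in $\DM^\eff$; this rests on the homotopy invariance and transfer structure of the higher direct images $R^i\alpha_*\alpha^*\bG_m$.
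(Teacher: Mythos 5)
Your proposal is correct in substance and its core is the same as the paper's: your computation of the cohomology sheaves of $\bG_m^\et$ is exactly the paper's Lemma \ref{l4.1}, and your cone-plus-spectral-sequence step is a repackaging of the paper's hypercohomology spectral sequence $E_2^{p,q}=R^p_\nr R^q\alpha_*\alpha^*\bG_m\Rightarrow R^{p+q}_\nr\bG_m^\et$, whose low-degree terms give at once the isomorphisms for $q\le 1$ and the five-term exact sequence $0\to R^2_\nr\bG_m\to R^2_\nr\bG_m^\et\to R^0_\nr\Br\to R^3_\nr\bG_m\to R^3_\nr\bG_m^\et$. The genuine divergence is the endgame, precisely the step you flag as the main obstacle. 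The paper never passes through classical residues and purity: it only needs a map $R^0_\nr\Br(X)\to\Br(X)$ making the sequence exact at $R^2_\nr\bG_m^\et(X)$, and this is supplied formally by the counit monomorphism $i^\o R^0_\nr\sF\inj\sF$ of Proposition \ref{p8.3}, applied to $\sF=\Br$ and evaluated at $X$ (formula \eqref{eq1} then identifies $R^0_\nr\Br$ with the unramified Brauer group, as in \cite{ct}). Your route instead proves the stronger statement $(R^2\alpha_*\alpha^*\bG_m)_\nr(X)\iso\Br(X)$, which pins down the target as the full unramified Brauer group, but at the cost of two heavy external inputs: the identification of the contraction $\Br_{-1}$ with $H^1_\et(-,\Q/\Z)$ together with the matching of the maps in \eqref{eq1} with classical residues, and purity for the Brauer group. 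Prime to the characteristic these are classical (Gabber's absolute purity); but the paper works over an algebraically closed field of \emph{arbitrary} characteristic, and for the $p$-primary part of $\Br$ in characteristic $p$ both the residue description and purity are much deeper, so as written your last step has a characteristic-$p$ gap that the paper's formal argument simply never opens. In short: your approach buys a sharper identification of the image when its hypotheses hold, while the paper's buys the stated theorem for free within its framework. Your subsidiary point that one must know $\bG_m^\et\in\DM^\eff$ (homotopy invariance and transfers for the sheaves $R^q\alpha_*\alpha^*\bG_m$) is well taken, though note the paper glosses over it equally.
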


Considering now $\sF=\sK_2$:

\begin{Th}\label{t3.1} We have an exact sequence
\begin{multline*}
0\to \Pic^\tau(X)F^*\to R^1_\nr\sK_2(X)\to H^1_\ind(X,\sK_2)\\
\by{\bar \delta} \Hom(\Griff_1(X),F^*)
\to R^2_\nr\sK_2(X)\to CH^2(X)
\end{multline*}
for any smooth projective variety $X$. Here 
\[H^1_\ind(X,\sK_2)=\Coker\left(\Pic(X)\otimes F^*\to H^1(X,\sK_2)\right)\]
 and 
 \[\Pic^\tau(X)F^*=\IM\left(\Pic^\tau(X)\otimes F^*\to H^1(X,\sK_2)\right).\]
\end{Th}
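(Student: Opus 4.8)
The plan is to run the machinery that yielded Theorem~\ref{t2.1}, now with $\sF=\sK_2$, exploiting the contraction identities $(\sK_2)_{-1}=\bG_m$, $(\sK_2)_{-2}=\Z$ and $(\sK_2)_{-3}=0$. As in the $\bG_m$ case, $R_\nr\sK_2(X)$ is to be computed from the Gersten complex of $\sK_2$ on the smooth projective models of $K=F(X)$, taken in the limit over all such models; its two lowest cohomology groups are $H^1(X,\sK_2)$ and, by Bloch's formula, $H^2(X,\sK_2)=CH^2(X)$. The decisive new feature is that the codimension-one residues of $\sK_2$ now take values in the nonconstant sheaf $\bG_m=(\sK_2)_{-1}$ rather than in $\Z=(\bG_m)_{-1}$; this is precisely what replaces the $\Z$-coefficients of Theorem~\ref{t2.1} by the $F^*$-coefficients appearing throughout the statement.

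I would organise the computation around two kinds of contribution, exactly as for $\bG_m$. The \emph{cohomological} contributions come from $H^i(X,\sK_2)$. Using the product $\bG_m\otimes\bG_m\to\sK_2$ I would split $H^1(X,\sK_2)$: its decomposable part is the image of $\Pic(X)\otimes F^*$, and, just as only $\Pic^\tau(X)$ survived in Theorem~\ref{t2.1}(ii), only the numerically trivial decomposable classes $\Pic^\tau(X)F^*$ survive in $R^1_\nr$, the indecomposable quotient being $H^1_\ind(X,\sK_2)$. The group $H^2(X,\sK_2)=CH^2(X)$ supplies the right-hand end of the sequence.

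The \emph{dual-cycle} contributions come from pairing $1$-cycles against $\sK_2$-classes. Restricting $\xi\in H^1(X,\sK_2)$ to a curve $C\subset X$ and using the isomorphism $H^1(C,\sK_2)\iso F^*$ (Weil reciprocity, $F$ being algebraically closed) produces an $F^*$-valued pairing, the exact analogue of the intersection pairing that gave $\Hom(\Griff_1(X),\Z)$ for $\bG_m$; here it gives $\Hom(\Griff_1(X),F^*)$, the reduction to numerical equivalence being again forced by the limit over models. Assembling the two kinds of contribution from the spectral sequence underlying $R_\nr$ yields the asserted exact sequence, in which $\bar\delta\colon H^1_\ind(X,\sK_2)\to\Hom(\Griff_1(X),F^*)$ is the connecting differential, namely the pairing of an indecomposable class against numerically trivial $1$-cycles, evaluated in $F^*$ and depending only on the class in $\Griff_1(X)$. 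Exactness at $R^1_\nr\sK_2(X)$ and at $H^1_\ind(X,\sK_2)$ says that $\Ker\bar\delta$ is exactly the indecomposable part lifting to $R^1_\nr$ over $\Pic^\tau(X)F^*$; exactness at $\Hom(\Griff_1(X),F^*)$ and at $R^2_\nr\sK_2(X)$ identifies $\Coker\bar\delta$ with the dual-cycle part of $R^2_\nr$, whose complement maps to $CH^2(X)$.

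The hard part will be the analysis around $\bar\delta$. Unlike the $\bG_m$ case the pairing is $F^*$-valued and governed by the nonconstant residue sheaf $\bG_m$, so the passage to the limit over models must be controlled much more carefully: one has to check that the tame-symbol differentials of the Gersten complexes of $\sK_2$ on successive blow-ups are compatible with restriction to curves, that the resulting $F^*$-valued pairing of indecomposable classes with numerically trivial $1$-cycles vanishes on those that are algebraically trivial (so that it factors through $\Griff_1(X)$), and that the whole construction is compatible with the limit over models so as to yield the clean duality into $F^*$. Establishing that $\bar\delta$ is precisely this pairing, together with the computation of $\Ker\bar\delta$ and $\Coker\bar\delta$, is the crux; granting it, the flanking inclusion $\Pic^\tau(X)F^*\inj R^1_\nr\sK_2(X)$, the edge map $R^2_\nr\sK_2(X)\to CH^2(X)$, and the remaining exactness follow formally from the spectral sequence as in Theorem~\ref{t2.1}.
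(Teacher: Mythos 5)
Your proposal has the right target in view (decomposable versus indecomposable classes, an $F^*$-valued pairing with $1$-cycles, $CH^2(X)$ at the end), and the heuristic that the contraction $(\sK_2)_{-1}=\bG_m$ is responsible for the $F^*$-coefficients is sound motivation. But the proof rests on a foundation that does not exist and that you do not construct: a computation of the higher functors $R^q_\nr$ from ``the Gersten complex of $\sK_2$ on smooth projective models, taken in the limit over all such models'', together with an unspecified ``spectral sequence underlying $R_\nr$''. The functor $R_\nr$ is defined abstractly as the right adjoint of $i^\o$, and only $R^0_\nr$ has the Gersten-type description \eqref{eq1}; the entire point of Section \ref{s2} of the paper is that the higher $R^q_\nr$ cannot a priori be computed by such naive resolutions, which is why Proposition \ref{p8.5} is introduced. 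That proposition (not a Gersten spectral sequence) is also what proves Theorem \ref{t2.1}, so your closing claim that the remaining exactness ``follows formally from the spectral sequence as in Theorem \ref{t2.1}'' is not grounded either. The actual argument converts $H^n(X,R_\nr\sK_2)$ into $\DM^\eff(\nu^{\ge 1}M(X),\sK_2[n])$ via the triangle $\nu^{\ge 1}M(X)\to M(X)\to i^\o\nu_{\le 0}M(X)$, then needs a d\'evissage you do not have (Lemma \ref{l3.1}: replacing $\sK_2[0]$ by $\Z(2)[2]$, using that the negative-degree cohomology sheaves of $\Z(2)[2]$ are birational), and the cancellation theorem to land in $\DM^\eff(\uHom(\Z(1)[2],M(X)),\bG_m[n-1])$.

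The second, equally serious, gap is the identification of your ``dual-cycle'' term. You assert that pairing against curves produces $\Hom(\Griff_1(X),F^*)$, with the passage to the appropriate equivalence relation ``forced by the limit over models'', but you give no mechanism for this (and you in fact misplace where each equivalence enters: the connecting map $\delta$ lands in $\Hom(A_1^\alg(X),F^*)$, and $\Griff_1(X)$ only appears after the decomposable classes are quotiented out). In the paper this is precisely where the theorem of Ayoub and Barbieri-Viale is indispensable: the chain of adjunctions \eqref{eq3.3}, using that $\underline{CH}_1(X)$ is a birational sheaf and Theorem \ref{t2.1}(i), identifies $\DM^\eff(\uHom(\Z(1)[2],M(X)),\bG_m[0])$ with $\Hom(L_0\pi_0\underline{CH}_1(X),F^*)$, and \eqref{eq2.1} ($\NS_1(X,0)=A_1^\alg(X)$) is what makes algebraic equivalence appear at all. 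Only then is $\delta$ identified with an intersection-type pairing (as in Lemma \ref{l8.8}), the product map $c:\Pic(X)\otimes F^*\to H^1(X,\sK_2)$ brought in, and the sequence extracted by a diagram chase in which the surjectivity of $\Pic(X)\otimes F^*\to\Hom(A_1^\num(X),F^*)$ (divisibility of $F^*$ plus finiteness of $D^1(X)$) is what forces the map to $R^2_\nr\sK_2(X)$ to factor through $\Hom(\Griff_1(X),F^*)$. Your restriction-to-curves pairing is a reasonable a posteriori description of $\delta$, but it cannot replace the adjunction argument that puts the group $\Hom(A_1^\alg(X),F^*)$ into the exact sequence in the first place.
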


The group $H^1(X,\sK_2)$ appears in other guises, as the higher Chow group $CH^2(X,1)$ or as the motivic cohomology group $H^3(X,\Z(2))$; its quotient $H^1_\ind(X,\sK_2)$ has been much studied and is known to be often nonzero. Note that, while it is not clear from the literature whether there exist smooth projective varieties $X$ such that $\Hom(\Griff_1(X),\Z)\allowbreak\ne 0$, no such issue arises for $\Hom(\Griff_1(X),F^*)$ since $F^*$ is divisible.

The following theorem was suggested by James Lewis. For a prime $l\ne \car F$, write $e_l$ for the exponent of the torsion subgroup of the $l$-adic cohomology group $H^3(X,\Z_l)$. Then $e_l=1$ for almost all $l$: in characteristic $0$ this follows from comparison with Betti cohomology, and in characteristic $>0$ it is a famous theorem of Gabber \cite{gabber}. Set $e=\text{\rm lcm}(e_l)$: in characteristic $0$, $e$ is the exponent of $H^3_B(X,\Z)_\tors$, where $H^*_B$ denotes Betti cohomology.

\begin{Th} \label{t6.1} Assume that homological equivalence equals numerical equivalence on $CH_1(X)\otimes \Q$. Then, $e\bar \delta=0$  in Theorem \ref{t3.1}.
\end{Th}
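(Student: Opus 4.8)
The plan is to realize the pairing $\bar\delta$ $l$-adically and to show that, under the hypothesis, its values are governed by the torsion-linking form on $l$-adic cohomology, whose exponent is exactly $e_l$. Since $F$ is algebraically closed, the torsion subgroup of $F^*$ is $\bigoplus_{l\ne\car F}\mu_{l^\infty}$, so the target of $\bar\delta$ has its torsion assembled from the various $l$-primary pieces. Writing $d=\dim X$, I want to show that for each $\xi\in H^1_\ind(X,\sK_2)$ and each $\gamma\in\Griff_1(X)$ the element $\bar\delta(\xi)(\gamma)\in F^*$ is a root of unity whose $l$-primary part is killed by $e_l$ for every $l\ne\car F$; taking $e=\operatorname{lcm}(e_l)$ then gives $\bar\delta(\xi)(\gamma)\in\mu_e$, i.e. $e\bar\delta=0$.

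First I would give a cohomological description of $\bar\delta(\xi)(\gamma)$. The class $\xi$, viewed in $H^1(X,\sK_2)=CH^2(X,1)$, has an $l$-adic regulator (étale Chern class) $r_l(\xi)\in H^3_\et(X,\Z_l(2))$, while $\gamma$ has a cycle class $cl_l(\gamma)\in H^{2d-2}_\et(X,\Z_l(d-1))$. The naive cup product
\[H^3_\et(X,\Z_l(2))\otimes H^{2d-2}_\et(X,\Z_l(d-1))\to H^{2d+1}_\et(X,\Z_l(d+1))=0\]
vanishes for dimension reasons, so $\bar\delta$ must be realized by the associated secondary (Bockstein) operation, which is defined precisely when one of the two classes is torsion. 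This is where the hypothesis enters decisively: since homological equivalence equals numerical equivalence on $CH_1(X)\otimes\Q$, the numerically trivial cycle $\gamma$ is homologically trivial rationally, whence $cl_l(\gamma)$ is torsion, lying in $H^{2d-2}_\et(X,\Z_l(d-1))_\tors$. By Poincaré duality the linking form on torsion is a perfect pairing between $H^3_\et(X,\Z_l(2))_\tors$ and $H^{2d-2}_\et(X,\Z_l(d-1))_\tors$, so these finite groups share the exponent $e_l$, and the secondary product takes values in $H^{2d}_\et(X,\Z/l^n(d+1))=\mu_{l^n}\subset F^*$. Consequently $\bar\delta(\xi)(\gamma)$ is computed from $cl_l(\gamma)$ through this form and its $l$-part is annihilated by $e_l$.

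The step I expect to be the main obstacle is the comparison: identifying the motivically defined connecting map $\bar\delta$ of Theorem \ref{t3.1} with the $l$-adic torsion-linking pairing just described. Concretely, one must check that the construction of $R_\nr\sK_2$, and the boundary map producing $\bar\delta$, is compatible with the étale realization functor, and that under realization this boundary becomes the secondary product of $r_l(\xi)$ against a Bockstein lift of $cl_l(\gamma)$. A crucial point to verify is that the potential non-torsion (``Abel--Jacobi'') contribution, which would land in the divisible part of $F^*$ and would not be annihilated by any integer, is exactly the part suppressed by the hypothesis, leaving only the finite linking contribution; assembling the prime-by-prime computations into the single statement $\bar\delta(\xi)(\gamma)\in\mu_e\subset F^*$ is part of this comparison as well. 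Once the compatibility is established, the exponent bound of the previous paragraph applies for each $l$, and taking the least common multiple yields $e\bar\delta=0$.
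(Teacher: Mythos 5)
Your plan founders on the step you yourself flag as the ``main obstacle,'' and the problem is not that the comparison is tedious but that, in the setting you work in, there is nothing to compare with. You compute entirely over the algebraically closed field $F$, where every group that could receive an $l$-adic shadow of the value $\bar\delta(\xi)(\gamma)\in F^*$ vanishes: $H^{2d+1}_\et(X,\Z_l(d+1))=0$ (as you note), $H^1_\et(F,\Z_l(1))=0$, and $F^*/l^n=0$ because $F^*$ is divisible. Consequently no cohomological computation carried out over $F$ itself --- primary cup product, Bockstein/secondary product, or torsion-linking form --- can distinguish a torsion element of $F^*$ from an element of infinite order: the divisible (``Abel--Jacobi'') part that you propose to show is ``suppressed by the hypothesis'' is invisible to every such realization. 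So the assertion that $\bar\delta$ ``must be realized by the associated secondary operation'' is not a verification you may postpone; it is the entire content of the theorem, and it cannot even be formulated, let alone proved, within the framework you set up.

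The paper supplies exactly the missing handle by spreading out. One chooses a regular $\Z$-algebra $R$ of finite type and a smooth projective model $p:\sX\to\Spec R$ with $X=\sX\otimes_R F$, and reduces by a limit argument to bounding the pairing on classes coming from $\sX$. Over $R$ the pairing \eqref{eq6.2} takes values in $R^*$, which is finitely generated, so the Kummer map $R^*\to H^1_\et(R,\Z_l(1))$ has finite kernel of order prime to $l$: this is what makes the $l$-adic pairing \eqref{eq6.3} faithfully reflect the $K$-theoretic one, and it is precisely what fails over $F$. The bound then comes from the Leray filtration of $p$: the product of two classes in $F^1$ lies in $F^2$, and $p_*F^2H^{2d+1}_\et(\sX,\Z_l(d+1))=0$ (Lemma \ref{l6.2}); the Colliot-Th\'el\`ene--Raskind finiteness of the image of the regulator $H^1(X,\sK_2)\to H^3_\et(X,\Z_l(2))$ shows that $H^1(X,\sK_2)/H^1(X,\sK_2)_0$ has exponent dividing $e_l$ (Lemma \ref{l6.1}); and the hypothesis that homological equals numerical equivalence (together with the duality between the torsion of $H^3$ and of $H^{2d-2}$ that you correctly invoke) is what places the classes of Griffiths cycles into the filtration as well. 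Thus several of your ingredients --- the torsionness of $cl_l(\gamma)$ under the hypothesis, the exponent bound $e_l$ via duality, the finiteness of the regulator image --- do occur in the paper's argument, but they are deployed over the arithmetic model $\sX/\Spec R$, not over $F$; without that spreading out, your argument cannot get started.
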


\begin{rks}  1) This hypothesis holds if $\car F = 0$ by Lieberman \cite[Cor. 1]{lieberman}. His argument shows that, in characteristic $p$, it holds for $l$-adic cohomology if and only if the Tate conjecture holds for divisors on $X$ -- more correctly, for divisors on a model of $X$ over a finitely generated field. In particular, it holds if $X$ is an abelian variety; in this case, $e=1$.\\
2) The prime to the characteristic part of the unramified Brauer group also appears in the exact sequence of Theorem \ref{t3.1} as a Tate twist of the torsion of $H^1_\ind(X,\sK_2)$ \cite[Th. 1]{indec}.
\end{rks}

\begin{Th}\label{ts} Suppose $\dim X\le 2$ in Theorems \ref{t2.1} and \ref{t3.1}. Then there exists an integer $t>0$ such that
\begin{thlist}
\item $R^2_\nr\bG_m(X)\simeq D^1(X)$, $R^3_\nr\bG_m(X)\simeq\Ext_\Z(A_1^\alg(X),\Z)\simeq \break\Hom_\Z(\NS(X)_\tors,\Q/\Z)$ and  $tR^q_\nr\bG_m(X)=0$ for $q>3$.
\item  $tR^q_\nr\sK_2(X)=0$ for $q>3$ and $R^3_\nr\sK_2(X)=0$. Moreover,
if $\dim X=2$, the last map of Theorem \ref{t3.1} identifies 
$R^2_{\nr}\sK_2(X)$ with an extension of the Albanese kernel by a finite group.
\end{thlist}
We have $t=1$ if $\dim X<2$, and $t$ only depends on the Picard variety $\Pic^0_{X/F}$ if $\dim X=2$.
\end{Th}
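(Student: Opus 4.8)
The plan is to substitute the dimension hypothesis into the exact sequences of Theorems~\ref{t2.1} and~\ref{t3.1} and to exploit that, once $1$-cycles are divisors, both $\Griff_1(X)$ and the higher Ayoub--Barbieri-Viale groups collapse. The decisive first step is that for $\dim X\le 2$ the group $\Griff_1(X)$ is \emph{finite}: when $\dim X=2$ the $1$-cycles are divisors, so $A_1^\alg(X)=\NS(X)$ while $N_1(X)=\NS(X)/\NS(X)_\tors$ (for divisors, algebraic and numerical equivalence differ only by torsion), whence $\Griff_1(X)=\NS(X)_\tors$; for $\dim X\le 1$ one has $\Griff_1(X)=0$. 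Hence $\Hom(\Griff_1(X),\Z)=0$, and Theorem~\ref{t2.1}(iii) degenerates to $R^2_\nr\bG_m(X)\simeq D^1(X)$.

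For $R^3_\nr\bG_m$ I would unwind the two terms of Theorem~\ref{t2.1}(iv) at $q=3$ from the definition in \cite[3.25]{abv}: one checks that $\NS_1(X,0)=A_1^\alg(X)$ (equal to $\NS(X)$ for a surface) and that $\NS_1(X,1)$ is torsion, so the $\Hom_\Z(-,\Z)$-term dies and $R^3_\nr\bG_m(X)\simeq\Ext_\Z(A_1^\alg(X),\Z)$; the universal-coefficient identity $\Ext_\Z(\NS(X),\Z)\cong\Hom(\NS(X)_\tors,\Q/\Z)$ then gives the stated form. For $q>3$ both indices $q-3,q-2$ are $\ge 1$, so if the $\NS_1(X,r)$ with $r\ge 1$ are torsion the $\Hom_\Z(-,\Z)$-term always vanishes and $R^q_\nr\bG_m(X)\simeq\Ext_\Z(\NS_1(X,q-3),\Z)$. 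I would then show that, for a surface, these $\NS_1(X,r)$ are torsion of \emph{bounded} exponent---they are controlled by the motive of the abelian variety $A=\Pic^0_{X/F}$---so that a single $t=t(A)$ annihilates all of them and hence every $R^q_\nr\bG_m(X)$ with $q>3$. This is the asserted dependence of $t$ on $\Pic^0_{X/F}$, and the reason $t=1$ when $\dim X<2$, where $\Pic^0$ contributes no such torsion.

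For $\sK_2$ the same geometric inputs feed Theorem~\ref{t3.1}, now with coefficients in the divisible group $F^*$. Finiteness of $\Griff_1(X)=\NS(X)_\tors$ makes $\Hom(\Griff_1(X),F^*)$ finite, so the kernel of $R^2_\nr\sK_2(X)\to CH^2(X)$---a quotient of $\Hom(\Griff_1(X),F^*)$ by $\IM(\bar\delta)$---is finite. For $\dim X=2$, where $CH^2(X)=CH_0(X)$, I would identify the image of this map with the Albanese kernel $T(X)=\Ker(CH_0(X)_{\deg 0}\to\Albv(X))$: I expect the image to be degree-$0$ and Albanese-trivial (the degree and abelian-variety layers of $CH_0(X)$ being lower-weight invariants not detected by $R^2_\nr\sK_2$), so that it lands in $T(X)$, and surjectivity onto $T(X)$ should come from the generation of the Albanese kernel by $\sK_2$-symbols; granting this, $R^2_\nr\sK_2(X)$ is an extension of $T(X)$ by the finite group above. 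For the higher degrees I would use the $\sK_2$-analogues of the sequences in Theorem~\ref{t2.1}(iv) (with the appropriate Tate twist): divisibility of $F^*$ kills every $\Ext_\Z(-,F^*)$-term, leaving only $\Hom_\Z(-,F^*)$-terms. Since $\Hom(M,F^*)$ is $t$-torsion whenever $M$ is (its image lies in $\mu_t(F)$), the same bounded-exponent statement on the higher $\NS_1(X,r)$ gives $tR^q_\nr\sK_2(X)=0$ for $q>3$ with the same $t=t(\Pic^0_{X/F})$; and $R^3_\nr\sK_2(X)=0$ follows because the single ABV group surviving at $q=3$ vanishes for a surface.

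The main obstacle is the computation and uniform boundedness of the Ayoub--Barbieri-Viale groups $\NS_1(X,r)$ for $r\ge 1$: one must extract from \cite{abv} that, for a surface, they are governed entirely by the abelian variety $\Pic^0_{X/F}$ and that a \emph{single} integer $t$ depending only on $\Pic^0_{X/F}$ annihilates them all, so that $t$ simultaneously controls $\Ext_\Z(\NS_1(X,r),\Z)$ for $\bG_m$ and $\Hom_\Z(\NS_1(X,r),F^*)$ for $\sK_2$. Everything else---the collapse of $\Griff_1(X)$, the universal-coefficient identifications, and the divisibility argument for $F^*$---is formal. The one remaining piece of genuine geometry is the surjectivity of $R^2_\nr\sK_2(X)\to CH^2(X)$ onto the Albanese kernel, which rests on the theory of $0$-cycles and $\sK_2$ rather than on the formal machinery above.
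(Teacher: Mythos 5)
Your part (i) is essentially the paper's own argument: kill $\Hom(\Griff_1(X),\Z)$, feed Theorem \ref{t2.1}(iv), and use the universal-coefficient identification $\Ext_\Z(\NS(X),\Z)\simeq\Hom_\Z(\NS(X)_\tors,\Q/\Z)$. The one ingredient you defer --- that a single integer $t$ depending only on $\Pic^0_{X/F}$ kills all $\NS_1(X,r)$, $r\ge 1$ --- is, however, not something to ``extract from \cite{abv}'': it is Proposition \ref{p3.2} of the paper, proved there by Poincar\'e duality ($\uHom(\Z(1)[2],M(X))\simeq\uHom(M(X),\Z(1)[2])$, with cohomology sheaves $\bG_m$ and $\Pic_{X/F}$, triangle \eqref{eq3.4}), the sequence \eqref{eq3.5}, and Lemma \ref{l3.2}~d) (a curve whose Jacobian surjects onto $\Pic^0_{X/F}$, split up to $t$ by complete reducibility, so that $L\pi_0\Pic^0_{X/F}$ is killed by $t$). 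So this half is correct in outline, with an honestly flagged hole that you would still have to fill.

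Part (ii) contains a genuine error, not just a deferred lemma. There is no ``$\sK_2$-analogue of Theorem \ref{t2.1}(iv) with $F^*$-coefficients'': the universal-coefficient sequence \eqref{eq4.4} rests on the adjunction isomorphism \eqref{eq8.3}, which is available only because the target $\Z[n-1]$ lies in $d_{\le 0}\DM^\eff$, i.e.\ in the image of $j$. The groups relevant to $\sK_2$ are $\DM^\eff(\uHom(\Z(1)[2],M(X)),\bG_m[i])$, and $\bG_m=\Z(1)[1]$ is \emph{not} a zero-dimensional motive, so maps into it are not computed by $L\pi_0$. The failure is not cosmetic: for a surface and $i=1$, the paper shows (diagram \eqref{eq6.4}, Lemma \ref{l6.4}, Proposition \ref{p6.1}) that this group is $\sA_X(F)$, the group of points of the Serre--Ramachandran Albanese scheme --- divisible, non-torsion, containing $\Albv(X)(F)$ --- whereas your formula predicts a group killed by $t$. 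Consequently your derivation of $R^3_\nr\sK_2(X)=0$ for $\dim X=2$ (``the single ABV group surviving at $q=3$ vanishes'') fails: the target of $\phi$ in \eqref{eq3.2} does not vanish. The correct statement is that $\phi$ \emph{is} the Albanese map $CH_0(X)\to\sA_X(F)$, which is surjective because $F$ is algebraically closed; this single identification simultaneously gives $R^3_\nr\sK_2(X)=0$ and $\IM\bigl(R^2_\nr\sK_2(X)\to CH^2(X)\bigr)=\Ker\phi=$ the Albanese kernel. Proving it (via the biduality diagram \eqref{eq6.4}, Lemma \ref{l6.3}, the computation $DA\simeq A^*\oplus\tau_{\ge 2}DA$ of Lemma \ref{l6.4}, and Proposition \ref{p6.1}) is the real work of part (ii); your sketch replaces it by the expectation that ``surjectivity should come from the generation of the Albanese kernel by $\sK_2$-symbols'', which is not an argument. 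Finally, for $q>3$ your conclusion $tR^q_\nr\sK_2(X)=0$ is true, but it must be obtained by running the d\'evissage of Proposition \ref{p3.2} directly on $\DM^\eff(\uHom(\Z(1)[2],M(X)),\bG_m[i])$ using \eqref{eq3.4} and \eqref{eq3.5} (the paper's unnumbered Proposition a) in the proof of Theorem \ref{ts}(ii)), not via the nonexistent universal-coefficient sequence.
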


Let $CH^2(X)_\alg$ denote the subgroup of $CH^2(X)$ consisting of cycle classes algebraically equivalent to $0$. Recall Murre's higher Abel-Jacobi map
\[AJ^3:CH^2(X)_\alg\to J^3(X)\]
where $J^3(X)$ is an algebraic intermediate Jacobian of $X$ \cite{murre}. Theorem \ref{ts} (ii)  suggests  that in general, $\IM(R^2_{\nr} (\sK_2)(X) \to CH^2 (X))$ should be contained in $\Ker AJ^3$. 

A key ingredient in the proofs of Theorems \ref{t2.1} and \ref{t3.1} is the work of \text{Ayoub} and Barbieri-Viale \cite{abv}, which identifies the ``maximal $0$-dim\-en\-sion\-al quotient" of the Nisnevich sheaf (with transfers) associated to the pre\-sheaf $U\mapsto CH^n(X\times U)$ with the group $A^n_\alg(X)$ of cycles modulo algebraic equivalence (see \eqref{eq2.1}).

The example $\sF=\sH^i_\et(\mu_m^{\otimes i})$ considered at the beginning relates to the sheaves studied in Theorems \ref{t2.1} and \ref{t3.1}  through the Bloch-Kato conjecture: Kummer theory for $\sK_1$ and the Merkurjev-Suslin theorem for $\sK_2$. Unfortunately, Theorem \ref{t2.1} barely suffices to compute $R^q_\nr(\bG_m/m)$ for $q\le 1$ and we have not been able to deduce from Theorem \ref{t3.1} any meaningful information on $R^*_\nr(\sK_2/m)$. We give the result for $R^1_\nr(\bG_m/m)$ without proof; there is an exact sequence, where $\NS(X)$ is the N\'eron-Severi group of $X$:
\[0\to (\NS(X)_{\text{tors}})/m\to R^1_\nr(\bG_m/m)\to {}_m D^1(X)\to 0\]
 and encourage the reader to test his or her insight on this issue.

Let us end this introduction by a comment on the content of the statement ``the assignment $X\mapsto \sF(X)$ makes $\sF$ an object of $\HI^\o$'', which applies to the objects appearing in Theorems \ref{t2.1} and \ref{t3.1}. It implies of course that $\sF(X)$ is a (stable) birational invariant of smooth projective varieties, which was already known in most cases; but it also implies some non-trivial functoriality, due to the additional structure of presheaf with transfers on $\sF$. For example, it yields a contravariant map $i^*:\sF(X)\to \sF(Y)$ for any closed immersion $i:Y\to X$. This does not seem easy to prove \emph{a priori}, say for $\sF(X)=D^1(X)=R^2_\nr\bG_m(X)_\tors$ in Theorem \ref{t2.1} (iii).

\section{Some results on birational motives}

We recall here some results from \cite{birat-tri}.

\begin{lemma}\label{l14.0} For any birational sheaf $\sF\in \HI^\o$ and any smooth variety $X$, $H^q(X,\sF)=0$ for $q>0$.
\end{lemma}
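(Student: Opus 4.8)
The plan is to reduce the asserted Nisnevich vanishing to the flasqueness of a constant sheaf for the Zariski topology, and then to transport the result across the Nisnevich--Zariski comparison available for objects of $\HI$. Since Nisnevich cohomology commutes with the finite disjoint decomposition of $X$ into connected components, and a smooth connected variety is irreducible, I may assume from the outset that $X$ is irreducible with generic point $\eta$.

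The first step uses the characterization recalled above that $\sF\in\HI^\o$ is locally constant for the Zariski topology. Set $A=\sF_\eta$. On the irreducible space $X$ every nonempty open $U$ is again irreducible, hence connected and containing $\eta$, so the constant sheaf $\underline A$ has $\underline A(U)=A$ with all restriction maps the identity. The restriction-to-the-generic-stalk maps $\sF(U)\to\sF_\eta=A$ are compatible with restriction and therefore assemble into a morphism of Zariski sheaves $\sF\to\underline A$; on a cover trivializing $\sF$ this morphism is an isomorphism, so it is an isomorphism globally. Thus $\sF|_{X_\Zar}\iso\underline A$; in particular $\sF(V)\to\sF(U)$ is an isomorphism for every dense open $U\subseteq V$, which is exactly the birational nature of $\sF$.

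The second step is then immediate: the constant sheaf $\underline A$ on the irreducible space $X$ has all of its restriction maps equal to the identity of $A$, hence surjective, so $\underline A$ is flasque and therefore acyclic. This gives $H^q_\Zar(X,\sF)=0$ for $q>0$. To conclude, I would invoke Voevodsky's theorem that for a homotopy invariant Nisnevich sheaf with transfers over a perfect field the Zariski and Nisnevich cohomologies agree (\cite{voepre}, \cite{mvw}), whence $H^q_\Nis(X,\sF)=H^q_\Zar(X,\sF)=0$ for $q>0$.

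The genuinely deep input --- and thus the main obstacle --- is this last comparison $H^*_\Nis=H^*_\Zar$, which rests on Voevodsky's strict homotopy invariance theorem and the associated Gersten resolution; everything else is elementary. If one prefers to stay within the Nisnevich topology throughout, an alternative is to argue directly with the Gersten (coniveau) complex computing $H^*_\Nis(X,\sF)$, whose term in cohomological degree $p$ is $\bigoplus_{x\in X^{(p)}}\sF_{-p}(F(x))$. Here the birational condition forces $\sF_{-1}=0$: indeed $\sF_{-1}(X)=\Coker(\sF(\mathbb A^1_X)\to\sF(\mathbb G_{m,X}))$, and the restriction along the dense open immersion $\mathbb G_{m,X}\hookrightarrow\mathbb A^1_X$ is an isomorphism by birationality. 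Consequently $\sF_{-p}=0$ for all $p\ge1$, the Gersten complex is concentrated in degree $0$, and $H^q_\Nis(X,\sF)=0$ for $q>0$ on the nose.
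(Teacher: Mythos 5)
Your argument is correct, but it takes a genuinely different route from the one the paper relies on. The paper's own ``proof'' is just the citation \cite[Prop.~1.3.3~b)]{birat-tri}, where the vanishing is established for birational Nisnevich sheaves as such, by an argument internal to the Nisnevich site (birationality makes Nisnevich covers, resp.\ elementary distinguished squares, split, resp.\ degenerate, over a dense open), so that neither transfers, nor homotopy invariance, nor the Zariski--Nisnevich comparison theorem is needed there. You instead use the full hypothesis $\sF\in\HI^\o\subset\HI$ twice: first the locally constant description of birational sheaves, to get that $\sF$ is constant, hence flasque and acyclic, on an irreducible $X$ for the \emph{Zariski} topology; and then---as you correctly flag---the deep input of Voevodsky's theorem that $H^*_\Nis(X,\sF)\simeq H^*_\Zar(X,\sF)$ for homotopy invariant Nisnevich sheaves with transfers over a perfect field. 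That step is legitimate here, since the Lemma is only asserted for $\sF\in\HI^\o$, but it is much heavier than what the cited proof uses, and it would not cover birational sheaves without the $\HI$-structure. Your Gersten-complex variant is also valid and rests on the same deep theorems; note that its key point, $\sF_{-1}=0$ for birational $\sF$, is exactly Proposition \ref{p2.3} of the paper (quoted from \cite[Prop.~1.5.2]{birat-tri}), so you could have cited it rather than rederived it. In short: what your approach buys is a transparent reduction to the classical fact that constant sheaves on irreducible spaces are acyclic; what the paper's cited proof buys is greater generality and economy of means, proving the vanishing for arbitrary birational Nisnevich sheaves by elementary excision-type arguments.
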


\begin{proof} See \cite[Prop. 1.3.3 b)]{birat-tri}.
\end{proof}

For the next proposition, let us write
\begin{equation}\label{eq1.1}
\nu^{\ge 1}M:=\uHom(\Z(1),M)(1)
\end{equation} 
for $M\in\DM^\eff$, where $\uHom$ is the internal Hom \cite[Prop. 3.2.8]{voetri}.

\begin{prop}\label{l5.3} For $M$ as above, we have a functorial exact triangle
\[\nu^{\ge 1}M\to M\to i^\o\nu_{\le 0} M\by{+1}.\]
Moreover, $M\in \IM i^\o$ if and only if $\uHom(\Z(1),M)=0$.
\end{prop}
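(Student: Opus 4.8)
The plan is to read this statement as a direct manifestation of the localization picture recalled from \cite{birat-tri}: the functor $\nu_{\le 0}$ exhibits $\DM^\o$ as the Verdier quotient of $\DM^\eff$ by the localizing subcategory $\sL$ generated by the objects $N(1)$, $N\in\DM^\eff$, and its fully faithful right adjoint $i^\o$ identifies $\DM^\o$ with the right orthogonal $\sL^\perp$. Under this dictionary the triangle is simply the decomposition of $M$ into its ``$\sL$-part'' $\nu^{\ge 1}M$ and its birational (that is, $\sL$-local) part $i^\o\nu_{\le 0}M$, while the ``moreover'' clause is the explicit description of $\sL^\perp$. I would therefore prove the last assertion first, since it is purely formal, and then use it to pin down the cone in the triangle.

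For the equivalence, note that because $\sL$ is generated as a localizing subcategory by the $N(1)$, an object $M$ lies in $\sL^\perp=\IM i^\o$ if and only if $\Hom(N(1)[j],M)=0$ for all $N\in\DM^\eff$ and all $j\in\Z$. By the tensor--internal-hom adjunction this reads $\Hom(N[j],\uHom(\Z(1),M))=0$ for all $N,j$; letting $N$ run over the motives of smooth varieties (a generating family) forces $\uHom(\Z(1),M)=0$, and the converse implication is immediate. This gives $M\in\IM i^\o\iff\uHom(\Z(1),M)=0$.

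To produce the triangle I would take the evaluation map (the counit of $-\otimes\Z(1)\dashv\uHom(\Z(1),-)$), namely $\nu^{\ge 1}M=\uHom(\Z(1),M)(1)\to M$, and let $Q$ be its cone, so that $\nu^{\ge 1}M\to M\to Q\by{+1}$. The crux is to show that $Q$ is birational, i.e. $\uHom(\Z(1),Q)=0$. Applying the exact functor $\uHom(\Z(1),-)$ to this triangle, it suffices to check that the induced map $\uHom(\Z(1),\nu^{\ge 1}M)\to\uHom(\Z(1),M)$ is an isomorphism. Here Voevodsky's cancellation theorem enters, supplying the natural isomorphism $\uHom(\Z(1),N(1))\cong N$; feeding this into the triangle identities for the adjunction $-\otimes\Z(1)\dashv\uHom(\Z(1),-)$ shows the map in question is an isomorphism, whence $\uHom(\Z(1),Q)=0$ and $Q\in\IM i^\o$ by the previous paragraph. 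This verification is the main obstacle: everything else is formal, but identifying $\uHom(\Z(1),\mathrm{ev})$ as an isomorphism genuinely requires cancellation together with the zig-zag relations of the adjunction.

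It then remains to identify $Q$ with $i^\o\nu_{\le 0}M$. Since $\nu^{\ge 1}M\in\DM^\eff(1)\subset\sL=\ker\nu_{\le 0}$, applying $\nu_{\le 0}$ to the triangle yields $\nu_{\le 0}M\iso\nu_{\le 0}Q$; and as $Q$ is $\sL$-local, its unit $Q\iso i^\o\nu_{\le 0}Q$ is an isomorphism, giving $Q\iso i^\o\nu_{\le 0}M$. Naturality of the unit shows that under this identification the map $M\to Q$ becomes the canonical map $M\to i^\o\nu_{\le 0}M$, and functoriality of the whole triangle follows because both the evaluation map and the unit are natural transformations. Finally, the two implications of the ``moreover'' clause can also be re-derived from the triangle itself (if $\uHom(\Z(1),M)=0$ then $\nu^{\ge 1}M=0$ and $M\iso i^\o\nu_{\le 0}M$), which provides a useful consistency check.
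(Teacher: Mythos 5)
Your proof is correct, but note that the paper itself offers no argument for this proposition: the ``proof'' is a citation to \cite{birat-tri} (Prop.\ 3.6.2 and Lemma 3.5.4), so what you have written is a reconstruction of the argument living in that reference rather than a comparison with anything in the present text. Your reconstruction is the expected one and matches how $\DM^\o$ is actually constructed there: $\nu_{\le 0}$ is the Verdier quotient by the localizing subcategory $\sL$ generated by the Tate-twisted objects $N(1)$, and $i^\o$ is its fully faithful right adjoint with essential image $\sL^\perp$. You correctly isolate the two real inputs: the adjunction $-\otimes\Z(1)\dashv\uHom(\Z(1),-)$ plus a generating family (for the ``moreover'' clause), and Voevodsky's cancellation theorem fed through a zig-zag identity of that adjunction (to see that $\uHom(\Z(1),\mathrm{ev}_M)$ is an isomorphism, hence that the cone $Q$ is $\sL$-local). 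One point you pass over too quickly: functoriality of the \emph{whole} triangle, including the connecting map $i^\o\nu_{\le 0}M\to\nu^{\ge 1}M[1]$, does not follow merely from the naturality of the evaluation map and of the unit, since cones are not functorial in general. It follows from the orthogonality $\Hom(\sL,\sL^\perp[j])=0$ for all $j$, which guarantees that any morphism $M\to M'$ extends \emph{uniquely} to a morphism between the two decomposition triangles; this uniqueness is what makes the assignment $M\mapsto(\nu^{\ge 1}M\to M\to i^\o\nu_{\le 0}M\by{+1})$ functorial. This is a standard lemma in Bousfield localization theory, but it deserves to be said explicitly.
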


\begin{proof} See \cite[Prop. 3.6.2 and Lemma 3.5.4]{birat-tri}.
\end{proof}

\begin{prop}\label{p8.3} For any $\sF\in \HI$, the counit map
\[i^\o R^0_\nr\sF \to \sF\]
is a monomorphism.
\end{prop}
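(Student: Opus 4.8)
The plan is to reduce the statement to a formal property of the coreflection $R^0_\nr$ together with one structural fact about $\HI^\o$, namely that it is closed under subobjects in $\HI$. Write $\epsilon\colon i^\o R^0_\nr\sF\to\sF$ for the counit in question and set $K=\Ker(\epsilon)$, computed in the abelian category $\HI$; I must show $K=0$. First I would record that the adjunction $(i^\o,R_\nr)$ restricts to the hearts. Since $i^\o$ is exact and $R_\nr$ is left exact for the homotopy $t$-structures, $R_\nr\sF\in\DM^\o_{\ge0}$, so for $\sG\in\HI^\o$ and $\sF\in\HI$ one has
\[
\Hom_{\HI}(i^\o\sG,\sF)=\Hom_{\DM^\eff}(i^\o\sG,\sF)=\Hom_{\DM^\o}(\sG,R_\nr\sF)=\Hom_{\HI^\o}(\sG,R^0_\nr\sF),
\]
the last equality because $\sG$ lies in the heart while $R_\nr\sF\in\DM^\o_{\ge0}$. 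Thus $R^0_\nr$ is right adjoint to the fully faithful exact inclusion $i^\o\colon\HI^\o\to\HI$, and $\epsilon$ is the counit of this adjunction of abelian categories.

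Next I would use that $K$ is a subobject of $i^\o R^0_\nr\sF$, which lies in $\HI^\o$. By the excerpt, $\HI^\o$ is a \emph{thick} (hence Serre) subcategory of $\HI$; in particular it is closed under subobjects. (Equivalently, one may recall from \cite{birat-tri} that $\HI^\o=\{\sF\in\HI:\sF_{-1}=0\}$ and that the contraction $\sF\mapsto\sF_{-1}$ is exact on $\HI$.) Hence $K\in\HI^\o$, and by full faithfulness of $i^\o$ the inclusion $j\colon K\hookrightarrow i^\o R^0_\nr\sF$ has the form $j=i^\o(j')$ for a unique morphism $j'\colon K'\to R^0_\nr\sF$ in $\HI^\o$, where $K=i^\o K'$.

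Finally I would feed this back into the adjunction. By definition of $K=\Ker(\epsilon)$ the composite $\epsilon\circ j\colon i^\o K'\to\sF$ vanishes. But the adjunction bijection $\Hom_{\HI^\o}(K',R^0_\nr\sF)\iso\Hom_{\HI}(i^\o K',\sF)$ is precisely $g\mapsto \epsilon\circ i^\o(g)$, and under it $j'\mapsto \epsilon\circ i^\o(j')=\epsilon\circ j=0$. Since this map is a bijection, $j'=0$, whence $j=i^\o(j')=0$ and $K=\IM(j)=0$. Therefore $\epsilon$ is a monomorphism.

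The main obstacle is the structural input that $\HI^\o$ is closed under subobjects in $\HI$: the purely formal argument of the last paragraph yields monomorphy only after one knows that the kernel $K$, a priori merely an object of $\HI$, already lies in the coreflective subcategory $\HI^\o$; everything else is the bookkeeping of restricting a triangulated adjunction with one-sided exactness to the hearts. Should one wish to avoid invoking the Serre property abstractly, an equivalent route is to pass to the cone $C$ of the counit $i^\o R_\nr\sF\to\sF$ in $\DM^\eff$: one checks $R_\nr C=0$ (apply $R_\nr$ and use $R_\nr i^\o=\mathrm{id}$), and since $\sF$ lies in the heart while $i^\o R_\nr\sF\in\DM^\eff_{\ge0}$, the long exact cohomology sequence identifies $\sH^{-1}(C)$ with $K$; applying the left exact $R_\nr$ to the truncation triangle $\sH^{-1}(C)[1]\to C\to\tau_{\ge0}C$ then forces $R^0_\nr K=0$, which combined with $K\in\HI^\o$ (so $R^0_\nr K=K$) again gives $K=0$. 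Either way the closure of $\HI^\o$ under subobjects is the one nonformal ingredient.
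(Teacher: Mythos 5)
Your argument is correct, but it is genuinely different from the paper's: the paper disposes of this proposition by citing \cite[Prop.~1.6.3]{birat-tri}, and that proof is concrete rather than formal --- it rests on the identification $R^0_\nr\sF=\sF_\nr$ of formula \eqref{eq1} in the introduction, under which the counit, evaluated on a smooth connected $X$ with function field $K$, composes with Voevodsky's injection $\sF(X)\hookrightarrow\sF(K)$ to give the defining inclusion of the kernel $\Ker\bigl(\sF(K)\to \prod_v \sF_{-1}(F(v))\bigr)$, so that monomorphy is immediate from the sheaf-theoretic description. Your proof instead runs entirely at the categorical level: the three hom-identifications restricting the adjunction $(i^\o,R_\nr)$ to the hearts are justified exactly as you say (fullness of the heart, the triangulated adjunction, and $R_\nr\sF\in(\DM^\o)^{\ge 0}$ by left exactness), and the whole statement is then reduced to the single structural fact that $\HI^\o$ is closed under subobjects in $\HI$. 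Your justification of that fact is also the right one: by Proposition \ref{p2.3}, $K\hookrightarrow i^\o R^0_\nr\sF$ with $(i^\o R^0_\nr\sF)_{-1}=0$ forces $K_{-1}=0$, and for this you only need left exactness of $(-)_{-1}$, which is immediate from its description as a kernel (the full $t$-exactness cited in Section 4 from \cite[Prop.~4.1.1]{birat-tri} is more than you need). The trade-off is clear: the paper's (deferred) proof yields the explicit formula \eqref{eq1} for $R^0_\nr$, which is used throughout the paper and carries real geometric content, whereas your argument is shorter, self-contained given the background results quoted in Section 1, and isolates the general principle --- the counit of a coreflection onto a full subcategory that is also closed under subobjects is automatically a monomorphism --- so it would apply verbatim in any such situation. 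Your alternative ending via the cone of $i^\o R_\nr\sF\to\sF$ is likewise correct and makes the same essential use of closure under subobjects, so either variant stands.
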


\begin{proof} See \cite[Prop. 1.6.3]{birat-tri}.
\end{proof}

\begin{prop}\label{p2.3} Let $\sF\in \HI$. Then $\sF\in \HI^0$ if and only if $\sF_{-1}=0$, where $\sF_{-1}$ is the contraction of $\sF$ (\cite{voepre} or \cite[Lect. 23]{mvw}).
\end{prop}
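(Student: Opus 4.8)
The plan is to deduce this from the characterization of $\IM i^\o$ in Proposition \ref{l5.3}, after reinterpreting both conditions. First I would observe that, since $i^\o$ is fully faithful and $t$-exact for the homotopy $t$-structure, it identifies the heart $\HI^\o$ of $\DM^\o$ with the full subcategory of $\HI$ consisting of those $\sF$ whose image in $\DM^\eff$ (placed in degree $0$) lies in $\IM i^\o$: indeed, if $\sF=i^\o M$ with $\sF$ in the heart, then $t$-exactness gives $i^\o\sH^j(M)=\sH^j(i^\o M)=0$ for $j\ne 0$, and full faithfulness on hearts forces $M$ to lie in the heart of $\DM^\o$. Thus the condition ``$\sF\in\HI^\o$'' is equivalent to ``$\sF\in\IM i^\o$''.

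By the second assertion of Proposition \ref{l5.3}, $\sF\in\IM i^\o$ if and only if $\uHom(\Z(1),\sF)=0$. It therefore remains to identify $\uHom(\Z(1),\sF)$, for $\sF\in\HI$, with the contraction $\sF_{-1}$. For this I would use Voevodsky's computation $M(\bG_m)\cong\Z\oplus\Z(1)[1]$: applying $\uHom(-,\sF)$ and taking sections over a smooth $U$ recovers the canonical splitting $\sF(\bG_m\times U)=\sF(U)\oplus\sF_{-1}(U)$, so that the $\Z(1)[1]$-summand yields $\uHom(\Z(1)[1],\sF)=\sF_{-1}$, i.e. $\uHom(\Z(1),\sF)\cong\sF_{-1}[1]$. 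Since $\HI$ is stable under contraction, $\sF_{-1}\in\HI$ is concentrated in a single degree, whence $\uHom(\Z(1),\sF)=0$ if and only if $\sF_{-1}=0$. Combining the two equivalences proves the proposition.

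The main point to pin down is this last identification of the internal $\Hom$ with the contraction: one must check that $\uHom(\Z(1),\sF)$ has no spurious higher cohomology sheaves and really equals, up to shift, the Nisnevich sheaf $\sF_{-1}$, rather than merely agreeing on sections. This is exactly where I would invoke the stability of $\HI$ under contraction together with the representability of the sections of $\uHom(-,\sF)$ by $\sF(\bG_m\times-)$, citing \cite{voetri} (or \cite[Lect. 23]{mvw}); everything else is formal from Proposition \ref{l5.3}.
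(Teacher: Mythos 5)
Your proof is correct in substance, and it is necessarily a different route from the paper's, because the paper gives no argument at all: its proof of Proposition \ref{p2.3} is the bare citation \cite[Prop.~1.5.2]{birat-tri}. Your reduction is sound and each step can be justified from results quoted elsewhere in this paper: the identification of $\HI^\o$ with $\{\sF\in\HI : \sF[0]\in\IM i^\o\}$ works as you say ($t$-exactness lets $i^\o$ commute with the cohomology functors $\sH^j$, and full faithfulness makes $i^\o$ reflect zero objects, so $\sH^j(M)=0$ for $j\ne 0$ and $M$ lies in the heart), and Proposition \ref{l5.3} then translates $\sF[0]\in\IM i^\o$ into the vanishing of $\uHom(\Z(1),\sF[0])$.

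For the last step, which you rightly single out as the main point, you do not need to re-derive anything from $M(\bG_m)\simeq\Z\oplus\Z(1)[1]$: it is exactly Proposition \ref{p4.6} of the paper with $C=\sF[0]$, which gives $\sH^i(\uHom(\Z(1)[1],\sF[0]))=\sH^i(\sF[0])_{-1}$, hence $\uHom(\Z(1)[1],\sF[0])=\sF_{-1}[0]$ and $\uHom(\Z(1),\sF[0])=\sF_{-1}[1]$. I stress this because your own justification (``stability of $\HI$ under contraction plus representability of sections'') does not by itself close the gap you identified: agreement on sections over every smooth $U$ only pins down the degree-zero cohomology presheaf, while excluding higher cohomology sheaves requires Voevodsky's isomorphism $H^n_\Nis(\bG_m\times U,\sF)\simeq H^n_\Nis(U,\sF)\oplus H^n_\Nis(U,\sF_{-1})$ together with the vanishing, for $n>0$, of the Nisnevich sheafification of $U\mapsto H^n_\Nis(U,\sF_{-1})$; Proposition \ref{p4.6} packages precisely this. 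One final caveat: within this paper, Propositions \ref{l5.3} and \ref{p4.6} are black boxes, so your deduction is legitimate here; but in \cite{birat-tri} the statement you are proving (Prop.~1.5.2 there) appears in Section 1, before the results underlying Propositions \ref{l5.3} and \ref{p4.6} (Sections 3 and 4), so as a reconstruction of the original argument yours runs the development backwards and one should verify no circularity is introduced. What your approach buys in exchange is that, granted the triangulated formalism, the sheaf-level criterion falls out purely formally, with no computation.
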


\begin{proof} This is \cite[Prop. 1.5.2]{birat-tri}.
\end{proof}

\begin{prop}\label{p4.6} Let $C\in \DM^\eff$, and let $D=\uHom(\Z(1)[1],C)$. Then
\[\sH^i(D)=\sH^i(C)_{-1}\]
for any $i\in\Z$.
\end{prop}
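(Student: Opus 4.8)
The plan is to reduce the statement to the case where $C$ is a single homotopy invariant sheaf in degree $0$, and then to promote it to arbitrary complexes by proving that the functor $T:=\uHom(\Z(1)[1],-)$ is $t$-exact for the homotopy $t$-structure. The starting point is that $\Z(1)[1]$ is nothing but the sheaf $\bG_m=\sO^*$ placed in degree $0$: indeed $M(\bG_m)=\Z\oplus\Z(1)[1]$, so $\Z(1)[1]$ is the reduced summand of $M(\bG_m)$ and in particular a direct summand of a compact (geometric) object, hence compact. Thus $T=\uHom(\bG_m,-)$, and it commutes with arbitrary coproducts.

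\textbf{Base case.} For $\sG\in\HI$ I first compute $\sH^i(T\sG)$. By the adjunction between $-\otimes\Z(1)[1]$ and $T$, and the splitting $M(U\times\bG_m)=M(U)\oplus\bigl(M(U)\otimes\Z(1)[1]\bigr)$ induced by the unit section, for every smooth $U$ and every $i$ the group
\[\Hom_{\DM^\eff}(M(U),T\sG[i])=\Hom_{\DM^\eff}\bigl(M(U)\otimes\Z(1)[1],\sG[i]\bigr)\]
is the direct summand of $H^i_\Nis(U\times\bG_m,\sG)$ complementary to $H^i_\Nis(U,\sG)$. For $i<0$ this vanishes since $\sG$ is a sheaf in degree $0$; for $i=0$ it is by definition $\sG_{-1}(U)$, which is already a Nisnevich sheaf, so $\sH^0(T\sG)=\sG_{-1}$. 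For $i>0$ I claim the associated sheaf vanishes; this amounts to the assertion that $T\sG$ lies in $\HI$, concentrated in degree $0$, for $\sG\in\HI$ (equivalently $R^iq_*\sG=0$ for $i>0$, where $q\colon\bG_m\times-\to-$), which is a result of Voevodsky on homotopy invariant sheaves with transfers (\cite{voepre}, \cite[Lect. 23]{mvw}). Hence $T(\HI)\subseteq\HI$ and the induced functor on hearts is $\bar T=(-)_{-1}$.

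\textbf{$t$-exactness and conclusion.} Since $\Z(1)[1]\in\HI\subset\DM^{\eff,\le 0}$ and the tensor product is right $t$-exact, the left adjoint $-\otimes\Z(1)[1]$ is right $t$-exact, so its right adjoint $T$ is left $t$-exact; thus $TC\in\DM^{\eff,\ge 0}$ whenever $C\in\DM^{\eff,\ge 0}$. For right $t$-exactness I use that $T$ commutes with coproducts, hence with homotopy colimits: writing any $C\in\DM^{\eff,\le 0}$ as $\hocolim_n\tau_{\ge -n}C$ and applying $T$ reduces the claim $TC\in\DM^{\eff,\le 0}$ to bounded $C$, where it follows from $T(\HI)\subseteq\HI$ by devissage along the truncation triangles. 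Therefore $T$ is $t$-exact, so it commutes with the cohomology functors of the $t$-structures, $\sH^i\circ T=\bar T\circ\sH^i$, and by the base case $\bar T=(-)_{-1}$. This gives $\sH^i(D)=\sH^i(T C)=(\sH^i C)_{-1}$, as asserted.

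\textbf{Main obstacle.} The only non-formal ingredient is the degree-$0$ concentration of $T\sG$ in the base case, that is, the vanishing for $i>0$ of the Nisnevich sheaf associated to $U\mapsto H^i_\Nis(U\times\bG_m,\sG)$ for $\sG\in\HI$. Once this is granted, everything else — the adjunction, the identification of the heart-level functor with the contraction, left $t$-exactness, compactness of $\Z(1)[1]$, and the homotopy-colimit devissage handling the unbounded case — is formal.
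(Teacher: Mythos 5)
Your strategy --- prove the statement for a single sheaf $\sG\in\HI$, where it amounts to Voevodsky's theorem on contractions (the split exact sequences $0\to H^i_\Nis(U,\sG)\to H^i_\Nis(U\times\bG_m,\sG)\to H^i_\Nis(U,\sG_{-1})\to 0$ furnished by homotopy invariance of cohomology and purity), and then promote it to complexes via $t$-exactness of $T=\uHom(\Z(1)[1],-)$ --- is the natural one; note that the paper gives no argument for this proposition beyond the citation of \cite[(4.1)]{birat-tri}. Your base case, the bounded d\'evissage, and the formal left $t$-exactness of $T$ are all correct, and you correctly isolate Voevodsky's sheaf-level theorem as the only non-formal input \emph{in the bounded range}.

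However, the step handling unbounded objects fails as written. For $C\in\DM^{\eff,\le 0}$ the truncations $\tau_{\ge -n}C$ do \emph{not} form an inductive system: the natural maps are $C\to\tau_{\ge -n}C$ and $\tau_{\ge -n-1}C\to\tau_{\ge -n}C$, so $\{\tau_{\ge -n}C\}_n$ is a Postnikov tower (a pro-object), and $C$ is not its homotopy colimit; at best $C\iso\holim_n\tau_{\ge -n}C$, and only if the homotopy $t$-structure is left complete. (The genuine colimit presentation $C\simeq\hocolim_n\tau_{\le n}C$ exists, but its terms are bounded only when $C$ is bounded \emph{below} for the $t$-structure, which is the opposite of the situation you need.) Replacing $\hocolim$ by $\holim$ does not rescue the argument either: $T$, being a right adjoint, does commute with homotopy limits, but your closure argument for the aisle collapses, because $\holim$ is built from infinite products, which are not exact on Nisnevich sheaves in general, so $\DM^{\eff,\le 0}$ is not closed under them. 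Thus right $t$-exactness on unbounded objects is genuinely not formal. The standard repair invokes the finiteness of the Nisnevich cohomological dimension of each smooth $U$: the fibre of $C\to\tau_{\ge -N}C$ lies in $\DM^{\eff,\le -N-1}$ and $U\times\bG_m$ has cohomological dimension $\dim U+1$, whence
\[\Hom_{\DM^\eff}\bigl(M(U)(1)[1],C[i]\bigr)\iso\Hom_{\DM^\eff}\bigl(M(U)(1)[1],\tau_{\ge -N}C[i]\bigr)\qquad\text{for }N>\dim U-i;\]
testing $\sH^i(TC)$ against each fixed $U$ (over whose small Nisnevich site such an $N$ may be chosen uniformly) then reduces the computation of all cohomology sheaves of $TC$ to the bounded case you have already established. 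With that substitution for your homotopy-colimit step, the proof is complete.
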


\begin{proof} This is \cite[(4.1)]{birat-tri}.
\end{proof}

\section{Computational tools}\label{s2}

For $q\ge 0$, the $R^q_\nr$'s define a cohomological $\delta$-functor from $\HI$ to $\HI^\o$. Since $\HI$ is a Grothendieck
category (it has a set of generators and exact filtering direct limits),
it has enough injectives, so it makes sense to wonder if $R^q_\nr$ is the $q$-th derived functor of $R^0_\nr$. However, if $\sI\in \HI$
is injective, while $R^0_\nr\sI$ is clearly injective in $\HI^\o$, it is not clear whether $R^q_\nr\sI=0$ for $q>0$: the problem is similar to the one raised in \cite[Rk. 1 after Prop. 3.1.8]{voetri}. (In particular, the title of this paper should be taken with a pinch of salt.) Thus one cannot \emph{a priori} compute the higher $R^q_\nr$'s via injective resolutions; we give here another approach.

\begin{lemma}\label{l8.6} Let $\sF\in \HI$, and let $X$ be a smooth variety. Then the hypercohomology spectral sequence
\[E_2^{p,q}=H^p(X,R^q_\nr\sF)\Rightarrow H^{p+q}(X,R_\nr\sF)\]
degenerates, yielding isomorphisms
\[H^n(X,R_\nr\sF)\simeq H^0(X,R^n_\nr\sF).\]
\end{lemma}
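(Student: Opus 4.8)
The plan is to exploit that the cohomology sheaves of $R_\nr\sF$ are birational, hence Nisnevich-acyclic in positive degrees by Lemma~\ref{l14.0}, so that the spectral sequence collapses onto a single column.

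First I would note that the spectral sequence in question is the hypercohomology spectral sequence computing $H^*(X,C)$ for the complex $C=R_\nr\sF\in\DM^\o$ out of the Nisnevich cohomology of its cohomology sheaves $\sH^q(C)=R^q_\nr\sF$. Because $R_\nr$ is left exact for the homotopy $t$-structure (as recalled in the Introduction), $R^q_\nr\sF=0$ for $q<0$; thus $C$ is cohomologically bounded below, the spectral sequence is supported in the region $p,q\ge 0$, and its convergence to $H^{p+q}(X,R_\nr\sF)$ poses no difficulty.

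The crucial input is that each cohomology sheaf $R^q_\nr\sF=\sH^q(R_\nr\sF)$ lies in $\HI^\o$, the heart of $\DM^\o$. Applying Lemma~\ref{l14.0} to this birational sheaf and to the smooth variety $X$ gives $H^p(X,R^q_\nr\sF)=0$ for every $p>0$. Consequently
\[
E_2^{p,q}=0 \quad (p>0), \qquad E_2^{0,q}=H^0(X,R^q_\nr\sF),
\]
so the $E_2$-page is concentrated in the single column $p=0$.

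To finish I would invoke the elementary fact that a spectral sequence supported in one column degenerates at $E_2$: each differential $d_r\colon E_r^{p,q}\to E_r^{p+r,q-r+1}$ with $r\ge 2$ either emanates from a column $p>0$, where the source vanishes, or lands in the column $p+r\ge 2$, where the target vanishes; hence all $d_r$ are zero and $E_2=E_\infty$. Convergence then identifies $H^n(X,R_\nr\sF)$ with $E_\infty^{0,n}=E_2^{0,n}=H^0(X,R^n_\nr\sF)$, as claimed. I expect essentially no serious obstacle here: the whole argument reduces to recognizing Lemma~\ref{l14.0} as the source of the column vanishing, and the only point deserving a moment's care is the boundedness needed for convergence of the spectral sequence, which comes from the left exactness of $R_\nr$.
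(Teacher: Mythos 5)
Your proof is correct and is essentially the paper's own argument: the paper's entire proof is the observation that $E_2^{p,q}=0$ for $p>0$ by Lemma~\ref{l14.0}, which is exactly your key step. Your additional remarks on convergence (via left exactness of $R_\nr$, so $R^q_\nr\sF=0$ for $q<0$) are a harmless and reasonable elaboration of a point the paper leaves implicit.
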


\begin{proof} Indeed, $E_2^{p,q}=0$ for $p>0$ by Lemma \ref{l14.0}.
\end{proof}

\begin{prop}\label{p8.5} Let $C\in \DM^{\eff}$, and let $X$ be a smooth variety. Then we have a long exact sequence
\begin{multline*}
\dots\to H^n(X,R_\nr C)\to H^n(X,C)\\ 
\to \DM^\eff(\nu^{\ge 1}M(X),C[n])\to H^{n+1}(X,R_\nr C)\to\dots
\end{multline*}
In particular, if $C=\sF[0]$ for $\sF\in \HI$, we get a long exact sequence
\begin{multline*}
0\to R^0_\nr\sF(X)\to \sF(X)\to \DM^\eff(\nu^{\ge 1}M(X),\sF[0])\to\dots\\
\to R^n_\nr \sF(X)\to H^n(X,\sF)
\to \DM^\eff(\nu^{\ge 1}M(X),\sF[n])\to\dots
\end{multline*}
\end{prop}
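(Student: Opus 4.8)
The plan is to derive both long exact sequences formally, by applying the contravariant cohomological functor $\DM^\eff(-,C)$ to the distinguished triangle of Proposition~\ref{l5.3} taken at $M=M(X)$, the motive of $X$. Concretely, I would start from
\[\nu^{\ge 1}M(X)\to M(X)\to i^\o\nu_{\le 0}M(X)\by{+1}\]
and apply $\DM^\eff(-,C)$. Since $\Hom$ out of a distinguished triangle is exact, this produces a long exact sequence whose three recurring terms (indexed by the shift $C[n]$) are $\DM^\eff(i^\o\nu_{\le 0}M(X),C[n])$, $\DM^\eff(M(X),C[n])$ and $\DM^\eff(\nu^{\ge 1}M(X),C[n])$, with connecting maps raising $n$ by one. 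The middle term is $H^n(X,C)$ by representability of Nisnevich hypercohomology by $M(X)$, and the third is left untouched; the whole point is then to identify the first term with $H^n(X,R_\nr C)$.

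For that identification I would chain the two adjunctions $\nu_{\le 0}\dashv i^\o\dashv R_\nr$. All three functors are triangulated, hence commute with the shift $[n]$, so
\[\DM^\eff(i^\o\nu_{\le 0}M(X),C[n])=\DM^\o(\nu_{\le 0}M(X),R_\nr C[n])=\DM^\eff(M(X),i^\o R_\nr C[n])=H^n(X,R_\nr C),\]
using first the right adjunction $i^\o\dashv R_\nr$, then the left adjunction $\nu_{\le 0}\dashv i^\o$, and finally representability once more. Feeding these identifications into the long exact sequence yields the first display verbatim.

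The second sequence I would obtain by specializing to $C=\sF[0]$ with $\sF\in\HI$. Here $H^n(X,R_\nr\sF)=R^n_\nr\sF(X)$: Lemma~\ref{l8.6} gives $H^n(X,R_\nr\sF)\simeq H^0(X,R^n_\nr\sF)$, and the latter is simply the group of global sections of the birational sheaf $R^n_\nr\sF$. As $\sF$ sits in degree $0$, both $H^n(X,\sF)$ and $R^n_\nr\sF(X)$ vanish for $n<0$, so the sequence begins at $n=0$; the leading injectivity $0\to R^0_\nr\sF(X)\to\sF(X)$ is the effect on global sections (a left exact operation) of the counit $i^\o R^0_\nr\sF\to\sF$, which is a monomorphism of sheaves by Proposition~\ref{p8.3}.

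The argument is essentially formal, so the only point needing care — and the one I would treat as the main obstacle — is verifying that the adjunction isomorphisms above are natural enough to identify not merely the objects but the maps of the long exact sequence: one must confirm that the arrow $R^n_\nr\sF(X)\to H^n(X,\sF)$ induced by $M(X)\to i^\o\nu_{\le 0}M(X)$ agrees with the one coming from the counit, so that the leading injectivity really is the one supplied by Proposition~\ref{p8.3}. Granting this compatibility, both sequences drop out of the exactness of $\Hom$ applied to the triangle of Proposition~\ref{l5.3}.
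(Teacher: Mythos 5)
Your proof is correct and takes essentially the same route as the paper: apply $\DM^\eff(-,C)$ to the triangle of Proposition~\ref{l5.3} at $M(X)$, identify the term $\DM^\eff(i^\o\nu_{\le 0}M(X),C[n])$ with $H^n(X,R_\nr C)$ by the iterated adjunctions $\nu_{\le 0}\dashv i^\o\dashv R_\nr$, and then specialize to $C=\sF[0]$ using Lemma~\ref{l8.6} and the monomorphism of Proposition~\ref{p8.3}. The adjunction/counit compatibility you flag as the main point of care is precisely the standard fact the paper's proof also uses (silently).
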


\begin{proof} By iterated adjunction, we have
\begin{multline*}
H^n(X,R_\nr C)\simeq \DM^\eff(M(X),i^\o R_\nr C[n])\\
\simeq \DM^\o(\nu_{\le 0}M(X),R_\nr C[n])
\simeq \DM^\eff(i^\o\nu_{\le 0}M(X), C[n]).
\end{multline*}

The first exact sequence then follows from Proposition \ref{l5.3}. The second follows from the first, Lemma \ref{l8.6} and Proposition \ref{p8.3} b).
\end{proof}

\begin{prop}\label{p8.5a} Let $X$ be smooth and proper, and let $n\ge 0$. Then 
\[\uHom(\Z(n)[2n],M(X))\in (\DM^\eff)^{\le 0}.\]
Moreover,
\[\sH^0\left(\uHom(\Z(n)[2n],M(X)) \right)=\underline{CH}_n(X)\]
with
\[\underline{CH}_n(X)(U)=CH_n(X_{F(U)})\]
for any smooth connected variety $U$. Similarly, we have
\[\uHom(M(X),\Z(n)[2n])\in (\DM^\eff)^{\le 0}\]
and
\[\sH^0\left(\uHom(M(X),\Z(n)[2n]) \right)=\underline{CH}^n(X)\]
with
\[\underline{CH}^n(X)(U)=CH^n(X_{F(U)}).\]
\end{prop}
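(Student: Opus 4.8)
The plan is to reduce both assertions to the motivic cohomology of $U\times X$ by adjunction, to use the vanishing of higher Chow groups in negative degree for the $t$-structure statement, and to read off $\sH^0$ from the sections at the generic points. I will treat the contravariant object first and deduce the covariant one by Poincaré duality.

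First, set $C=\uHom(M(X),\Z(n)[2n])$. For any smooth $U$ and any $i\in\Z$, the adjunction defining $\uHom$ together with $M(U)\otimes M(X)=M(U\times X)$ gives
\[\DM^\eff(M(U),C[i])=\DM^\eff(M(U\times X),\Z(n)[2n+i])=H^{2n+i}(U\times X,\Z(n))=CH^n(U\times X,-i),\]
the last equality being Voevodsky's comparison of motivic cohomology with Bloch's higher Chow groups. Since $CH^n(-,j)=0$ for $j<0$, this group vanishes for $i>0$. Next I would deduce $C\in(\DM^\eff)^{\le 0}$. The cohomology sheaves $\sH^i(C)$ lie in $\HI$, and such a sheaf is zero as soon as its sections over every finitely generated field $K=F(U)$ vanish (by the injection of sections into the generic stalk). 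Passing to the generic point kills higher Nisnevich cohomology, so in the colimit the hypercohomology spectral sequence degenerates and yields
\[\sH^i(C)(K)=\colim_U \DM^\eff(M(U),C[i])=\colim_U CH^n(U\times X,-i),\]
the colimit running over smooth models $U$ of $K$. By the previous step this is $0$ for $i>0$, giving $C\in(\DM^\eff)^{\le 0}$, while for $i=0$ continuity of Chow groups identifies it with $\colim_U CH^n(U\times X)=CH^n(X_K)=CH^n(X_{F(U)})$. This exhibits $\sH^0(C)$ as $\underline{CH}^n(X)$.

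The covariant statement then follows formally. For $X$ smooth projective of dimension $d$, Poincaré duality gives $M(X)\cong\uHom(M(X),\Z(d)[2d])$, whence
\[\uHom(\Z(n)[2n],M(X))\cong\uHom(M(X),\Z(d-n)[2(d-n)]),\]
the cancellation theorem handling effectivity for $0\le n\le d$ (the remaining range being degenerate, as $\underline{CH}_n(X)=0$ for $n>d$). This is literally the contravariant object with $n$ replaced by $d-n$, so both the $t$-structure statement and the computation $\sH^0=\underline{CH}^{d-n}(X)=\underline{CH}_n(X)$, with sections $CH_n(X_{F(U)})$, transfer directly.

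The genuinely delicate points, where I expect the main work to lie, are the identification $\sH^i(C)(K)=\colim_U\DM^\eff(M(U),C[i])$ — i.e. the commutation of the cohomology-sheaf computation with passage to the generic point, which rests on the vanishing of $H^{>0}_{\Nis}$ of a field and on continuity — and, in positive characteristic, ensuring that Poincaré duality and the cancellation theorem are genuinely available for the smooth proper $X$ under consideration.
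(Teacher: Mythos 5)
Your argument is correct in the range $0\le n\le\dim X$, and it is in substance the same proof as the paper's: the paper's own ``proof'' is a citation of \cite[Th.\ 2.2]{motiftate}, and the argument there runs on exactly the ingredients you use --- identify sections of the cohomology sheaves over a function field $K=F(U)$ with (higher) Chow groups of $U\times X$, resp.\ $X_K$, invoke the vanishing of higher Chow groups in negative degrees, and conclude via the facts that objects of $\HI$ are detected on function fields and that fields have no higher Nisnevich cohomology. The only organizational difference is the direction of the duality step: you prove the contravariant statement first, where bare adjunction suffices, and deduce the covariant one by Poincar\'e duality, whereas \cite{motiftate} proves the covariant statement directly via duality (the paper then declares the contravariant one ``similar''). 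Your order has the small virtue of isolating exactly where properness of $X$ enters. Your closing worry about characteristic $p$ is reasonable but unfounded: the duality $M(X)\simeq\uHom(M(X),\Z(d)[2d])$ comes from the class of the diagonal and holds over any perfect field (the paper itself uses it, citing \cite[Prop.\ 2.5.4]{bar-kahn}), and both the cancellation theorem and the comparison of motivic cohomology with higher Chow groups are available over perfect fields in any characteristic.

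The one genuine loose end is the case $n>d:=\dim X$ of the covariant statement, which the proposition does assert ($n\ge 0$ is arbitrary, and e.g.\ $n=1$, $X$ a point occurs in the paper). There your reduction breaks down, since cancellation cannot produce $\Z(d-n)$ with $d-n<0$, and the remark that $\underline{CH}_n(X)=0$ for $n>d$ only identifies what the answer must be; it does not show that $\uHom(\Z(n)[2n],M(X))$ lies in $(\DM^\eff)^{\le 0}$ with vanishing $\sH^0$. The fix is one more run of your own method: by the generic-point principle it suffices to show $\DM^\eff(M(U)(n)[2n],M(X)[i])=0$ for all smooth $U$ and all $i\in\Z$; by Poincar\'e duality, adjunction and cancellation this group is $\DM^\eff(M(U\times X)(n-d)[2(n-d)],\Z[i])$, which vanishes because $n-d\ge 1$ and $\Z[i]$ is a birational motive (Proposition \ref{l5.3}: a motive $M$ is birational if and only if $\uHom(\Z(1),M)=0$, so maps from any positively twisted object into $\Z[i]$ vanish) --- equivalently, because motivic cohomology in negative weights is zero. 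Thus the internal Hom is actually $0$ for $n>d$, and with this line added your proof is complete.
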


\begin{proof} The first statement is \cite[Th. 2.2]{motiftate}. The second is proven similarly.
\end{proof}

\begin{lemma}\label{l8.7} Let $\sF\in\HI^\o$. Then $R^q_\nr i^\o\sF=0$ for $q>0$.
\end{lemma}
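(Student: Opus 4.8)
The plan is to reduce the statement to the elementary categorical fact that the unit of an adjunction whose left adjoint is fully faithful is invertible. Concretely, I would argue that $R_\nr i^\o$ is canonically isomorphic to the identity functor on $\DM^\o$, so that $R_\nr i^\o\sF$ is concentrated in cohomological degree $0$ for any $\sF$ in the heart $\HI^\o$.

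First I would recall from the introduction that we are in the situation $\nu_{\le 0}\dashv i^\o\dashv R_\nr$, so that $i^\o$ is a \emph{left} adjoint of $R_\nr$, and that $i^\o$ is fully faithful. The standard fact about adjunctions then applies: the unit $\eta\colon \operatorname{id}_{\DM^\o}\to R_\nr i^\o$ is a natural isomorphism exactly because the left adjoint $i^\o$ is fully faithful. Hence for every $G\in\DM^\o$ the unit $\eta_G\colon G\to R_\nr i^\o G$ is an isomorphism in $\DM^\o$; in particular $R_\nr i^\o G\cong G$.

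Next I would specialize to $G=\sF\in\HI^\o$, viewed as an object of $\DM^\o$ sitting in degree $0$ for the homotopy $t$-structure. Since $i^\o$ is exact, $i^\o\sF$ is precisely the object of $\HI$ attached to $\sF$ under the inclusion of hearts $\HI^\o\subset\HI$, so that $R^q_\nr i^\o\sF=\sH^q(R_\nr i^\o\sF)$ is well defined. By the previous step $R_\nr i^\o\sF\cong \sF$ in $\DM^\o$, and because $\sF$ lies in the heart $\HI^\o$ of the homotopy $t$-structure its cohomology objects $\sH^q(\sF)$ vanish for all $q\ne 0$. Therefore
\[
R^q_\nr i^\o\sF=\sH^q(R_\nr i^\o\sF)\cong \sH^q(\sF)=0\qquad (q>0),
\]
which is the assertion; in fact the same computation gives the stronger vanishing $R^q_\nr i^\o\sF=0$ for every $q\ne 0$.

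I do not expect a genuine obstacle here: the only points that require care are bookkeeping ones, namely that the homotopy $t$-structures on $\DM^\eff$ and $\DM^\o$ are compatible through $i^\o$ (so that an object of the heart has no higher cohomology) and that the isomorphism $R_\nr i^\o\cong\operatorname{id}$ is the identification of $\sF$ with itself throughout. Both are immediate from the formalism recalled in Section 1, so the proof is essentially a one-line consequence of the full faithfulness of $i^\o$.
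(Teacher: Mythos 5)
Your proof is correct and is exactly the paper's argument: the paper's one-line proof invokes the adjunction isomorphism $\sF[0]\iso R_\nr i^\o\sF[0]$ coming from the full faithfulness of the left adjoint $i^\o$, which is precisely your observation that the unit of $i^\o\dashv R_\nr$ is invertible, whence $R_\nr i^\o\sF[0]$ is concentrated in degree $0$.
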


\begin{proof} This is obvious from the adjunction isomorphism (due to the full faithfulness of $i^\o$) $\sF[0]\iso R_\nr i^\o\sF[0]$.
\end{proof}

\section{Varieties of dimension $\le 2$}

As in \cite[\S 3.4]{voetri}, let $d_{\le 0} \DM^\eff$ be the localising subcategory of $\DM^\eff$ generated by motives of varieties of dimension $0$: since $F$ is algebraically closed, this category is equivalent to the derived category $D(\Ab)$ of abelian groups \cite[Prop. 3.4.1]{voetri}. In \cite[Cor. 2.3.3]{abv}, Ayoub and Barbieri-Viale show that the inclusion functor
\[j:d_{\le 0} \DM^\eff\Inj \DM^\eff\]
has a left adjoint $L\pi_0$. 

\begin{lemma}\label{l3.2} a) For any smooth connected variety $X$, the structural map $X\to \Spec F$ induces an isomorphism $L\pi_0M(X)\iso L\pi_0\Z=\Z$.\\ 
b) We have $L\pi_0 \bG_m=0$.\\
c) If $C$ is a smooth projective irreducible curve with Jacobian $J$ (viewed as an object of $\HI$), then $L\pi_0J=0$.\\
d) If $A$ is an abelian variety, viewed as an object of $\HI$ (cf. \cite[Lemma 3.2]{spsz} or \cite[Lemma 1.4.4]{bar-kahn}), then there exists an integer $t>0$ such that $tL\pi_0 A=0$. Moreover, $L_0\pi_0(A):= H_0(L\pi_0(A))=0$.
\end{lemma}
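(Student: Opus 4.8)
The plan is to use that $L\pi_0$ is a triangulated left adjoint, hence additive and exact, that it restricts to the identity on $d_{\le 0}\DM^\eff$ (so $L\pi_0\Z=\Z$), and that $L_0\pi_0=H_0\circ L\pi_0$ is right exact. I also record that for $K\in d_{\le 0}\DM^\eff\simeq D(\Ab)$ the object $jK$ is a complex of constant sheaves with transfers, whose homology sheaves $\sH^i(jK)$ are constant and hence lie in $\HI^\o$ (equivalently have vanishing contraction, Proposition \ref{p2.3}). For (a) I would use the defining adjunction $\Hom_{D(\Ab)}(L\pi_0M(X),K)\simeq\DM^\eff(M(X),jK)=H^0(X,jK)$. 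Since $\sH^i(jK)\in\HI^\o$, Lemma \ref{l14.0} collapses the hypercohomology spectral sequence onto the row $p=0$, and connectedness of $X$ gives $H^0(X,\underline A)=A$; thus $\DM^\eff(M(X),jK)\simeq H^0(K)=\Hom_{D(\Ab)}(\Z,K)$ naturally in $K$, and Yoneda yields $L\pi_0M(X)\iso\Z$, compatibly with the structural map.

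Part (b) is analogous: writing $\bG_m=\Z(1)[1]$, the internal-hom adjunction gives $\DM^\eff(\bG_m,jK)\simeq\DM^\eff(\Z,\uHom(\Z(1)[1],jK))$, and Proposition \ref{p4.6} computes $\sH^i(\uHom(\Z(1)[1],jK))=\sH^i(jK)_{-1}=0$ because $\sH^i(jK)$ is constant; hence $\uHom(\Z(1)[1],jK)=0$ and $L\pi_0\bG_m=0$. I would then deduce (c) from (a) and (b): for a curve $C$ with a rational point, the Chow--K\"unneth decomposition $M(C)\simeq\Z\oplus M_1(C)\oplus\Z(1)[2]$ gives, after applying the additive functor $L\pi_0$ and using $L\pi_0M(C)=\Z=L\pi_0\Z$ from (a) together with $L\pi_0\Z(1)[2]=(L\pi_0\bG_m)[1]=0$ from (b), that $L\pi_0M_1(C)=0$. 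Invoking the identification $M_1(C)\iso J$ in $\DM^\eff$ of the first Chow--K\"unneth motive of a curve with the homotopy invariant sheaf with transfers represented by its Jacobian then yields $L\pi_0J=0$.

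For (d)(i) I would realize $A$ as an isogeny factor of a Jacobian: a general curve section $C\hookrightarrow A$ of a projective embedding induces, by Albanese functoriality, a surjection $p\colon J_C\to A$, and Poincar\'e reducibility furnishes $s\colon A\to J_C$ with $p\circ s=[t]_A$ for some $t>0$. Since $p$ and $s$ induce maps of the associated sheaves, applying $L\pi_0$ and using $L\pi_0J_C=0$ from (c) shows that $L\pi_0(p\circ s)$ factors through $0$; by additivity $t\cdot\mathrm{id}_{L\pi_0A}=0$, i.e. $tL\pi_0A=0$. For (ii) I would use that $A$ is a divisible sheaf, so that right exactness of $L_0\pi_0$ makes $L_0\pi_0A$ divisible; a divisible group annihilated by $t$ (as in (i)) is zero, whence $L_0\pi_0A=0$.

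The two steps I expect to be delicate are the following. Part (c) rests on the external identification $M_1(C)\iso J$, which I would quote from the theory of motives of curves and $1$-motives rather than reprove. More seriously, in (d)(ii) the multiplication $[n]\colon A\to A$ is an epimorphism of Nisnevich sheaves with transfers only when $n$ is invertible in $F$; hence in characteristic $p$ the divisibility argument directly controls only the prime-to-$p$ part of $L_0\pi_0A$, and the $p$-primary part requires either passing to $\Z[1/p]$-coefficients or a separate argument. Settling this $p$-primary contribution is the main obstacle.
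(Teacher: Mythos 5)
Parts a), c) and the first claim of d) of your proposal are essentially the paper's own arguments: a) is the same adjunction-plus-Yoneda computation (the paper invokes flasqueness of constant sheaves where you invoke Lemma \ref{l14.0}; the collapse is the same), c) is the split form of the paper's exact triangle $\bG_m[1]\to M^0(C)\to J[0]\by{+1}$ from \cite[Th. 3.4.2]{voetri}, and d)(i) is the same complete-reducibility argument. Your b) is a genuinely different and valid route: the paper deduces $L\pi_0\bG_m=0$ from a) applied to $\P^1$ via $M(\P^1)\simeq\Z\oplus\bG_m[1]$, whereas you use the contraction computation $\sH^i(\uHom(\Z(1)[1],jK))=\sH^i(jK)_{-1}=0$ (Propositions \ref{p4.6} and \ref{p2.3}); both work, and yours has the small advantage of not needing the projective line at all.

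The proof of $L_0\pi_0(A)=0$, however, has a genuine gap, and it is worse than the caveat you state at the end. Your premise that $A$ is a divisible object of $\NST$ for multiplication by integers invertible in $F$ is false: $[n]\colon A\to A$ is an epimorphism of \emph{\'etale} sheaves when $(n,\car F)=1$, but it is not an epimorphism of \emph{Nisnevich} sheaves (with or without transfers) for any $n>1$. Indeed, the cokernel in $\NST$ is the Nisnevich sheafification of $U\mapsto A(U)/nA(U)$, and since fields and Hensel local rings are points of the Nisnevich site, its sections over a function field $K/F$ are $A(K)/nA(K)$, which is nonzero in general: for an elliptic curve $E$ and $K=F(E)$, the generic point $\eta\in E(K)$ is not divisible by $n$, since $\xi\in E(K)=\Hom_F(E,E)$ with $[n]\circ\xi=\mathrm{id}_E$ would contradict $\deg[n]=n^2>1$; the same failure already occurs for $\bG_m$, as $K^*/n\ne 0$. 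So the divisibility argument breaks down at \emph{every} prime, in every characteristic, not just at the $p$-primary part in characteristic $p$; right exactness of $L_0\pi_0$ then gives nothing, because the relevant cokernels do not vanish. The paper's proof is entirely different and avoids sheaf divisibility: by adjunction, $L_0\pi_0(A)=0$ is equivalent to $\Hom_\NST(A,\sF)=0$ for every constant sheaf $\sF$; given $f\colon A\to\sF$, evaluate at the tautological section $1_A\in A(A)$: for connected $X$ and $a\in A(X)=\Hom_F(X,A)$, naturality gives $f(a)=a^*f(1_A)=f(1_A)$, since $a^*$ is the identity on the constant sheaf $\sF$; hence $f$ is constant, and a constant additive map is zero. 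You should replace your d)(ii) by an argument of this kind; as written, that step fails.
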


\begin{proof} a) By adjunction and Yoneda's lemma, we have to show that for any object $C\in D(\Ab)$, the map
\[H^*_\Nis(F,C)\to H^*_\Nis(X,C)\]
is an isomorphism. This is well-known: by a hypercohomology spectral sequence, reduce to $C$ being a single abelian group; then $C$ is flasque (see \cite[Lemma 1.40]{riou}).

b) follows from a), applied to $X=\P^1$ (note that $M(\P^1)\simeq \Z\oplus \bG_m[1]$).

c) Let $M^0(C)$ be the fibre of the map $M(C)\to \Z$. By a), $L\pi M^0(C)=0$. By \cite[Th. 3.4.2]{voetri}, we have an exact triangle 
\begin{equation}\label{eq3.7}
 \bG_m[1]\to M^0(C)\to J[0]\by{+1}
 \end{equation}
so the claim follows from a) and b).

d) As is well-known, there exists a curve $C$ with Jacobian $J$ and an epimorphism $J\to A$, which is split up to some integer $t$ by complete reducibility. The first  claim then follows from c).

Let $\NST$ be the category of Nisnevich sheaves with transfers \cite{voetri}. To see that $L_0\pi_0(A)=0$, it is equivalent by adjunction to see that $\Hom_\NST(A,\sF)=0$ for any constant $\sF\in \NST$. We may identify $\sF$ with its value on any connected $X\in \Sm$. Let $f:A\to \sF$ be a morphism in $\NST$. Evaluating it on $1_A\in A(A)$, we get an element $f(1_A)\in \sF(A)=\sF$. If $X\in \Sm$ is connected and $a\in A(X)=\Hom_F(X,A)$, then $f(a)=a^*f(1_A)=f(1_A)$. So $f$ is constant, and since it is additive it must send $0$ to $0$. This proves that $f=0$, and thus $\Hom_\NST(A,\sF)=0$.
\end{proof}



\begin{prop}\label{p3.2}
Let $X/F$ be a smooth projective variety of dimension $\leq 2$. Then there exists an integer $t=t(X)>0$ such that $t\NS_1(X,i)=0$ for $i>0$. We have $t=1$ for $\dim X\le 1$, and we may take for $t$ the integer associated to $\Pic^0(X)$ in Lemma \ref{l3.2} d) for $\dim X=2$.
\end{prop}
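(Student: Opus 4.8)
The plan is to express each $\NS_1(X,i)$ through the functor $L\pi_0$ and then to reduce, in the surface case, to the single input of Lemma \ref{l3.2} d) on abelian varieties. Recall from \cite[3.25]{abv} that $\NS_1(X,i)=L_i\pi_0\bigl(\underline{CH}_1(X)\bigr)$ is the $i$-th homology of $L\pi_0$ applied to the cycle sheaf $\underline{CH}_1(X)$ of Proposition \ref{p8.5a}, so that $\NS_1(X,0)=A_1^\alg(X)$ by the Ayoub--Barbieri-Viale theorem. The whole statement therefore amounts to analysing this sheaf and the higher $L\pi_0$ of its constituents, using that $L\pi_0$ is a functor of triangulated categories (hence turns short exact sequences of sheaves into exact triangles, and so long exact sequences in homology).

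First I would dispose of $\dim X\le 1$. If $\dim X=0$ then $X=\Spec F$ and $\underline{CH}_1(X)=0$, so every $\NS_1(X,i)$ vanishes. If $\dim X=1$ then $\underline{CH}_1(X)(U)=CH_1(X_{F(U)})=\Z$, i.e. $\underline{CH}_1(X)$ is the constant sheaf $\Z$; since a finitely generated constant sheaf lies in the essential image of $j\colon d_{\le 0}\DM^\eff\Inj\DM^\eff$, its $L\pi_0$ is concentrated in degree $0$, whence $\NS_1(X,i)=0$ for $i>0$ and $t=1$ works.

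The heart of the matter is $\dim X=2$, where $1$-cycles are divisors and $\underline{CH}_1(X)=\underline{\Pic}(X)$. The key structural input I would establish is a short exact sequence of Nisnevich sheaves with transfers
\[0\to \Pic^0_{X/F}\to \underline{\Pic}(X)\to \underline{\NS}(X)\to 0,\]
in which $\Pic^0_{X/F}$ is the Picard variety viewed as an object of $\HI$ as in Lemma \ref{l3.2} d), and $\underline{\NS}(X)$ is the (constant, finitely generated) N\'eron--Severi sheaf. Applying $L\pi_0$ and reading off the long exact sequence, together with the two facts that $L_i\pi_0$ of a constant sheaf vanishes for $i>0$ and that $L_0\pi_0\Pic^0_{X/F}=0$ by Lemma \ref{l3.2} d), gives on the one hand $L_0\pi_0\underline{\Pic}(X)\cong \NS(X)$ (a reassuring recovery of $A_1^\alg(X)=\NS(X)$), and on the other hand isomorphisms $\NS_1(X,i)=L_i\pi_0\underline{\Pic}(X)\cong L_i\pi_0\Pic^0_{X/F}$ for every $i\ge 1$. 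Since $tL\pi_0\Pic^0_{X/F}=0$ for the integer $t$ attached to $\Pic^0_{X/F}$ in Lemma \ref{l3.2} d), this yields $t\NS_1(X,i)=0$ for all $i>0$ with $t$ depending only on $\Pic^0_{X/F}$, as asserted.

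I expect the main obstacle to be the justification of the displayed short exact sequence, that is, the identification of the cycle sheaf $\underline{\Pic}(X)$ as a sheaf with transfers with an extension of the constant N\'eron--Severi sheaf by the Picard variety. This rests on representability of the Picard functor and, crucially, on the fact that since $F$ is algebraically closed the N\'eron--Severi part is genuinely $0$-dimensional (so that its positive $L\pi_0$ vanish); the transcendental part of $H^2(X)$ plays no role because only divisor classes enter $\underline{\Pic}(X)$. Once this is in place the rest is formal. As a check, and as an alternative avoiding the sheaf-theoretic identification, one can instead feed a Chow--K\"unneth decomposition of $M(X)$ into $\uHom(\Z(1)[2],M(X))$ and compute $L\pi_0$ summand by summand via Lemma \ref{l3.2}: the Tate summands produce constant sheaves with no higher $L\pi_0$, the top summand yields $\bG_m[1]$ which $L\pi_0$ kills, and the only piece contributing nonzero homology in positive degrees is the one carrying $h^1(X)$, i.e. again the Picard variety.
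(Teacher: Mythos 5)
Your core surface argument --- applying $L\pi_0$ to the exact sequence $0\to \Pic^0_{X/F}\to \Pic_{X/F}\to \NS_{X/F}\to 0$ and quoting Lemma \ref{l3.2} d) --- is exactly the paper's, and you have correctly isolated representability of $\Pic_{X/F}$ as the key input. But your starting point misquotes the definition, and this hides a step that has to be proved. In this paper (following \cite[3.25]{abv}), $\NS_1(X,i)$ is \emph{not} defined as $L_i\pi_0\bigl(\underline{CH}_1(X)\bigr)$: it is $H_i(L\pi_0 C_X)$, where $C_X=\uHom(\Z(1)[2],M(X))$ is a \emph{complex} of which $\underline{CH}_1(X)$ is only the top cohomology sheaf. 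For a surface, $C_X$ is genuinely not concentrated in one degree: by Poincar\'e duality $C_X\simeq \uHom(M(X),\Z(1)[2])$ (this is \eqref{eq3.8}), and evaluating on smooth varieties one finds cohomology sheaves $\Pic_{X/F}$ and $\bG_m$, giving the exact triangle $\bG_m[1]\to C_X\to \Pic_{X/F}[0]\by{+1}$ of \eqref{eq3.4}. So before your computation can begin, you must prove this identification and then invoke $L\pi_0\bG_m=0$ (Lemma \ref{l3.2} b)) to get $L\pi_0C_X\simeq L\pi_0\Pic_{X/F}[0]$; as written, your argument computes the derived $\pi_0$ of an a priori different object, and the equality of the two is precisely the content of the skipped step. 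The same issue is present in dimensions $0$ and $1$: knowing $\sH^0(C_X)=0$, resp.\ $\Z$, does not determine $L\pi_0C_X$; one needs $C_X=0$ (because $M(X)$ is birational, via Proposition \ref{l5.3}), resp.\ $C_X\simeq\Z[0]$ (Poincar\'e duality again).

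This is a fixable gap rather than a wrong approach: indeed your closing Chow--K\"unneth sketch contains the missing idea (a summand producing $\bG_m[1]$, killed by $L\pi_0$), and the paper's proof is essentially that sketch made precise without needing a Chow--K\"unneth decomposition --- the triangle \eqref{eq3.4} suffices. But the passage from $C_X$ to its $\sH^0$ must appear in the main argument as a proved step, not be absorbed into the definition of $\NS_1(X,i)$. Once it is inserted, the rest of what you wrote (constant sheaves lie in $d_{\le 0}\DM^\eff$ and so have no higher $L\pi_0$; $L_0\pi_0\Pic^0_{X/F}=0$ and $t\,L\pi_0\Pic^0_{X/F}=0$ by Lemma \ref{l3.2} d)) coincides with the paper's proof, including the recovery of $\NS_1(X,0)=\NS(X)$.
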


\begin{proof}
Recall that $\NS_1(X,i):=H_i(L\pi_0\uHom(\Z(1)[2], M(X)))$ \cite[Def. 3.2.5]{abv}. For simplicity, write $C_X=\uHom(\Z(1)[2],M(X))$. We go case by case, using Poincar\'e duality as in \cite[Lemma B.1]{motiftate}:

If $\dim~X=0,$ then $X=\Spec F$ and hence $M(X)\simeq \Z$ is a birational motive; therefore $C_X=0$ (Proposition \ref{l5.3}) and $L\pi_0C_X=0$.

If $\dim X$=1, then Poincar\'e duality produces an isomorphism
\[
C_X\simeq \uHom(M(X),\Z)\simeq \Z[0].
\]

Hence $L\pi_0C_X=\Z[0]$.

Now suppose that $X$ is a smooth projective surface. By Poincar\'e duality, we get an isomorphism
\begin{equation}\label{eq3.8}
C_X \simeq \uHom(M(X),\Z(1)[2]).
\end{equation}

By evaluating the latter complex against a varying smooth variety, one computes its homology sheaves as $\Pic_{X/F}$ and $\bG_m$ in degrees $0$ and $1$
respectively and zero elsewhere.  Hence we have an exact triangle\footnote{It is split by the choice of a rational point of $X$, but this is useless for the proof.}
\begin{equation}\label{eq3.4}
 \bG_m[1]\to C_X \to \Pic_{X/F}[0]\by{+1}.
\end{equation}

We have $L\pi_0\bG_m[1]=0$  by Lemma \ref{l3.2} b). On the other hand, the representability of $\Pic_{X/F}$ yields an exact sequence 
\begin{equation}\label{eq3.5}
0\to \Pic^0_{X/F}\to \Pic_{X/F}\to \NS_{X/F}\to 0
\end{equation}
where $\Pic^0_{X/F}$ is the Picard variety of $X$ and $\NS_{X/F}$ is the (constant) sheaf of connected components of the group scheme $\Pic_{X/F}$. Hence an exact triangle
\[L\pi_0 \Pic^0_{X/F}\to L\pi_0\Pic_{X/F}\to L\pi_0\NS_{X/F}\by{+1}\]
where $L\pi_0\NS_{X/F}=\NS(X)$. By Lemma \ref{l3.2} d), $L\pi_0 \Pic^0_{X/F}$ is torsion, which concludes the proof. (The vanishing of $L_0\pi_0\Pic^0_{X/F}$ gives back the isomorphism $L_0\pi_0C_X\iso \NS(X)$ of \cite{abv}, see \eqref{eq2.1} below.)
\end{proof}

\section{Birational motives and indecomposable $(2,1)$-cycles}

In this section, we only assume $F$ perfect; we give proofs of two results promised in \cite[Rks 3.6.4 and 3.4.2]{birat-tri}.  These results are not used in the rest of the paper.

For the first one, let $X$ be a smooth projective variety, and let $M=\uHom(M(X),\Z(2)[4])$. Note that $M\simeq M(X)$ if $\dim X=2$ by Poincar\'e duality (cf. proof of Proposition \ref{p3.2}). The functor $\nu_{\le 0}$ is right $t$-exact as the left adjoint of the $t$-exact functor $i^\o$ \cite[Th. 3.4.1]{birat-tri}, so $\nu_{\le 0} M\in (\DM^\o)^{\le 0}$ since $M\in (\DM^\eff)^{\le 0}$ by Proposition \ref{p8.5a}. We want to compute the last two non-zero cohomology sheaves of $\nu_{\le 0} M$. Here is the result:

\begin{thm}\label{t4.1} With the above notation, we have 
\[\sH^i(\nu_{\le 0} M) =
\begin{cases}
\underline{CH}^2(X)& \text{for $i=0$}\\
\underline{H}^1_\ind(X,\sK_2)& \text{for $i=-1$}
\end{cases}
\]
where the sections of $\underline{H}^1_\ind(X,\sK_2)$ over a smooth connected $F$-variety $U$ with function field $K$ are given by the formula
\[\underline{H}^1_\ind(X,\sK_2)(U) = \Coker\left(\bigoplus_{[L:K]<\infty} \Pic(X_L)\otimes L^* \to H^1(X_K,\sK_2)\right)\]
in which the map is given by products and transfers.
\end{thm}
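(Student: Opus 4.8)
The plan is to feed the functorial triangle of Proposition \ref{l5.3} into the long exact cohomology sequence in $\HI$ and to compute everything in terms of the ``positive part'' $\nu^{\ge1}M=\uHom(\Z(1),M)(1)$. Since $i^\o$ is $t$-exact and fully faithful, $\sH^i(i^\o\nu_{\le0}M)$ is just $\sH^i(\nu_{\le0}M)$ read inside $\HI$, so it suffices to compute the latter in degrees $0$ and $-1$ from $\nu^{\ge1}M\to M\to i^\o\nu_{\le0}M\by{+1}$. Recall from Proposition \ref{p8.5a} (with $n=2$) that $M\in(\DM^\eff)^{\le0}$ with $\sH^0(M)=\underline{CH}^2(X)$; and, by the same computation, $\sH^{-1}(M)=\underline{H}^1(X,\sK_2)$, the sheaf $U\mapsto H^1(X_{F(U)},\sK_2)$, using $H^3(-,\Z(2))=H^1(-,\sK_2)$.

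Next I would identify $\nu^{\ge1}M$. By the $\otimes$--$\uHom$ adjunction and Voevodsky's cancellation theorem,
\[\uHom(\Z(1),M)=\uHom(M(X)(1),\Z(2)[4])=\uHom(M(X),\Z(1)[4]),\]
so $\nu^{\ge1}M=\uHom(M(X),\Z(1)[4])(1)=N(1)[2]$, where $N:=\uHom(M(X),\Z(1)[2])$. Evaluating $N$ against smooth varieties exactly as in Proposition \ref{p8.5a}, using $H^j(Y,\Z(1))=H^{j-1}(Y,\bG_m)$ (which is $\bG_m(Y)$ for $j=1$, $\Pic(Y)$ for $j=2$, and $0$ for $j\le0$), gives $N\in(\DM^\eff)^{\le0}$ with $\sH^0(N)=\underline{CH}^1(X)=\underline{\Pic}(X)$ and $\sH^{-1}(N)=\bG_m$, whence the triangle $\bG_m[1]\to N\to\underline{\Pic}(X)\by{+1}$, the analogue of \eqref{eq3.4} for arbitrary $X$.

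I then twist and shift. The elementary input is that, for any $\sG\in\HI$, right $t$-exactness of $\otimes\Z(1)$ together with $\Z(1)=\bG_m[-1]\in(\DM^\eff)^{[1,1]}$ gives $\sG(1)\in(\DM^\eff)^{\le1}$, with top cohomology $\sH^1(\sG(1))=\sG\otimes_{\HI}\bG_m$. Applying $(1)$ to the triangle for $N[2]$ yields $\bG_m(1)[3]\to\nu^{\ge1}M\to\underline{\Pic}(X)(1)[2]\by{+1}$, in which the first term lies in degrees $\le-2$ and the third in degrees $\le-1$. Reading off $\sH^0,\sH^1$ and $\sH^{-1}$ I get $\sH^0(\nu^{\ge1}M)=\sH^1(\nu^{\ge1}M)=0$ and $\sH^{-1}(\nu^{\ge1}M)=\sH^1(\underline{\Pic}(X)(1))=\underline{\Pic}(X)\otimes_{\HI}\bG_m$. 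Plugging these into the long exact sequence attached to $\nu^{\ge1}M\to M\to i^\o\nu_{\le0}M$ collapses it to $\underline{CH}^2(X)\iso\sH^0(\nu_{\le0}M)$ and to
\[\sH^{-1}(\nu_{\le0}M)=\Coker\big(\underline{\Pic}(X)\otimes_{\HI}\bG_m\to\underline{H}^1(X,\sK_2)\big),\]
the map being the cup product $\Pic\otimes\bG_m\to H^1(\sK_2)$.

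It remains to read off sections. As the cokernel is a birational sheaf, its value on a connected $U$ equals its stalk at the generic point $\Spec K$, $K=F(U)$. The last, and I expect hardest, step is to present the sections $(\underline{\Pic}(X)\otimes_{\HI}\bG_m)(K)$ via the Somekawa/Mackey description of tensor products in $\HI$: this realizes them as a quotient of $\bigoplus_{[L:K]<\infty}\Pic(X_L)\otimes L^*$, the map to $H^1(X_K,\sK_2)$ being cup products followed by transfers. Since all the Somekawa relations already hold in the target $H^1(X_K,\sK_2)$, the image of the tensor product coincides with that of $\bigoplus_{[L:K]<\infty}\Pic(X_L)\otimes L^*$, so the cokernel is exactly $\underline{H}^1_\ind(X,\sK_2)(U)$ as stated. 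The main obstacle is thus this identification of the tensor-product sections and of the product-and-transfer map; the twisting and concentration bookkeeping, and the structure of $N$, are routine given Propositions \ref{l5.3} and \ref{p8.5a}.
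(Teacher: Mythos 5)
Your proposal is correct and takes essentially the same route as the paper's proof: the triangle of Proposition \ref{l5.3}, the cancellation theorem to identify $\nu^{\ge1}M$ with $C_X\otimes\bG_m[1]$ (your $N(1)[2]$), right $t$-exactness of $\otimes$ together with Proposition \ref{p8.5a} to place it in degrees $\le -1$ with top cohomology $\underline{CH}^1(X)\otimes_{\HI}\bG_m$, and the surjectivity result of \cite[2.14]{somekawa} to convert the cokernel into the stated ``products and transfers'' description. The only difference is cosmetic: the paper dispenses with your truncation triangle for $N$ and its twist, reading off the concentration and the top cohomology sheaf of $C_X\otimes\bG_m[1]$ directly from right $t$-exactness.
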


\begin{proof} We use the exact triangle of Proposition \ref{l5.3}. From the cancellation theorem (\cite{voecan}, \cite[Prop. A.1]{motiftate}), we get an isomorphism 
\[\nu^{\ge 1} M\simeq \uHom(M(X),\Z(1)[4])(1)\simeq C_X\otimes \bG_m[1]\]
where $C_X=\uHom(M(X),\Z(1)[2])$.

By Proposition \ref{p8.5a}, $C_X\in (\DM^\eff)^{\le 0}$. On the other hand, $\otimes$ is right $t$-exact because it is induced by a right $t$-exact $\otimes$-functor on $D(\NST)$ via the right $t$-exact functor $LC:D(\NST)\to \DM^\eff$. Hence $\nu^{\ge 1}M\in (\DM^\eff)^{\le -1}$. 

Using Proposition \ref{p8.5a} again, this shows the assertion in the case $i= 0$ (compare \cite[Th. 2.2 and its proof]{motiftate}). For the case $i=-1$, the long exact sequence of cohomology sheaves yields an exact sequence:
\[\dots \to \sH^0(C_X\otimes \bG_m)\to \sH^{-1}(M)\to \sH^{-1}(i^\o\nu_{\le 0} M)\to 0.\]

Let $\sF=\sH^0(C_X) = \underline{CH}^1(X)$; then $\sH^0(C_X\otimes \bG_m)=\sF\otimes_{\HI} \bG_m$ by right $t$-exactness of $\otimes$; here $\otimes_{\HI}$ is the tensor structure induced by $\otimes$ on $\HI$. For any function field $K/F$, the map induced by transfers 
\[\bigoplus_{[L:K]<\infty} \sF(L)\otimes \bG_m(L)\to (\sF\otimes_{\HI} \bG_m)(K)\]
is surjective \cite[2.14]{somekawa}, which concludes the proof.
\end{proof}

The second result which was promised in \cite[Rk. 4.3.2]{birat-tri} is:

\begin{prop} Let $E$ be an elliptic curve over $F$. Then the sheaf
\[\Tor_1^{\DM}(E,E) := \sH^{-1}(E[0]\otimes E[0])\]
is not birational. Here $E$ is viewed as an object of $\HI$ \cite[Lemma 1.4.4]{bar-kahn}.
\end{prop}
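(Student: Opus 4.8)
The plan is to prove non-birationality through the criterion of Proposition \ref{p2.3}: writing $\sF=\sH^{-1}(E[0]\otimes E[0])=\Tor_1^{\DM}(E,E)$, it suffices to show that the contraction $\sF_{-1}$ is nonzero. For a complex $C\in\DM^\eff$ set $C_{-1}:=\uHom(\Z(1)[1],C)$; by Proposition \ref{p4.6} one has $\sH^i(C_{-1})=\sH^i(C)_{-1}$, so the target is exactly $\sH^{-1}\big((E[0]\otimes E[0])_{-1}\big)$. The key structural input is the triangle \eqref{eq3.7} for the curve $E$ (whose Jacobian is $E$ itself), namely $\bG_m[1]\to M^0(E)\to E[0]\by{+1}$, together with the fact that the sheaf $E$ is birational (a rational map from a smooth variety to an abelian variety is a morphism), so that $E_{-1}=0$ and, by Lemma \ref{l14.0}, $H^{>0}(Y,E)=0$ for every smooth $Y$. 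I would tensor this triangle with $E[0]$ and apply the triangulated functor $(-)_{-1}$, obtaining
\[(\bG_m[1]\otimes E)_{-1}\to(M^0(E)\otimes E)_{-1}\to(E[0]\otimes E[0])_{-1}\by{+1}.\]
Since $\bG_m[1]\otimes E=E\otimes\Z(1)[2]$, repeated use of the cancellation isomorphism $\uHom(\Z(1),\Z(1)\otimes M)\simeq M$ \cite{voecan} gives $(\bG_m[1]\otimes E)_{-1}\simeq E[1]$.

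The heart of the argument is the middle term. Because $\Z\otimes E=E$ contracts to $0$, one has $(M^0(E)\otimes E)_{-1}\simeq(M(E)\otimes E)_{-1}$. Now $M(E)$ is a dualizable geometric motive, and Poincaré duality (as in the proof of Proposition \ref{p3.2}) gives $M(E)^\vee\simeq M(E)(-1)[-2]$. Hence
\[(M(E)\otimes E)_{-1}=\uHom\big(\Z(1)[1],\uHom(M(E)^\vee,E)\big)\simeq\uHom\big(M(E)^\vee(1)[1],E\big)\simeq\uHom(M(E),E)[1],\]
the last step because $M(E)^\vee(1)[1]\simeq M(E)[-1]$. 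Since $E$ is birational, $\uHom(M(E),E)$ has vanishing cohomology sheaves in all degrees $\neq0$ (degrees $<0$ trivially, degrees $>0$ by Lemma \ref{l14.0}), and its $\sH^0$ is the sheaf $\sG$ with $\sG(U)=H^0(E\times U,E)=\operatorname{Mor}_F(E\times U,E)$. Thus $(M^0(E)\otimes E)_{-1}\simeq\sG[1]$, and the triangle above becomes $E[1]\to\sG[1]\to(E[0]\otimes E[0])_{-1}\by{+1}$. Taking $\sH^{-1}$ yields $\sF_{-1}=\Coker\big(\psi\colon E\to\sG\big)$, where $\psi$ is the map induced on $\sH^{-1}$.

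It remains to see $\Coker(\psi)\neq0$, and here I would avoid identifying $\psi$ explicitly. Restriction of a morphism $E\times U\to E$ to a fibre $E\times\{u\}$ is, by rigidity, of the form $x\mapsto\phi(x)+a_u$ with $\phi\in\End(E)$ independent of $u$; this defines a split surjection of sheaves $\sG\to\underline{\End(E)}$ onto the constant sheaf $\underline{\End(E)}$, nonzero since $\Z\subseteq\End(E)$. The composite $E\by{\psi}\sG\to\underline{\End(E)}$ is a morphism from the sheaf $E$ to a constant sheaf, hence vanishes by the argument in the proof of Lemma \ref{l3.2} d) ($\Hom_\NST(A,\sF)=0$ for $A$ an abelian variety and $\sF$ constant). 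Therefore $\Coker(\psi)$ surjects onto $\underline{\End(E)}\neq0$, so $\sF_{-1}\neq0$ and $\sF$ is not birational. The main obstacle is the middle computation $(M^0(E)\otimes E)_{-1}\simeq\sG[1]$, where duality, cancellation, and the birationality of $E$ (via Lemma \ref{l14.0}) must be combined correctly and the bookkeeping of Tate twists and shifts is most delicate; a secondary subtlety is the final step, where the vanishing $\Hom_\NST(E,\text{constant})=0$ is exactly what guarantees that $\IM(\psi)$ lies in the kernel of $\sG\to\underline{\End(E)}$.
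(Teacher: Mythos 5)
Your strategy is genuinely different from the paper's proof, which realizes $E[0]\otimes E[0]$ as a direct summand of $M(E\times E)$ and reads off the contraction from Weil's formula for $\Pic_{E\times E/F}$ after matching idempotents; your skeleton (tensor the triangle \eqref{eq3.7} with $E[0]$, contract, identify the middle term) is sound, and the endgame using rigidity and $\Hom_\NST(E,\underline{\End(E)})=0$ to force $\IM(\psi)$ into the summand $E\subset\sG$ is a nice way to avoid computing $\psi$. But the central computation has a genuine gap. The contraction $(-)_{-1}=\uHom(\Z(1)[1],-)$ of Propositions \ref{p2.3} and \ref{p4.6} is an internal Hom \emph{in} $\DM^\eff$, whereas your dual $M(E)^\vee\simeq M(E)(-1)[-2]$ is not effective: every step involving it lives in the rigid category $\DM_{\gm}$ obtained by inverting the Tate twist, where $\uHom_{\DM_{\gm}}(\Z(1)[1],C)=C(-1)[-1]$. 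The identity $C_{-1}\simeq C(-1)[-1]$ is false in general: by Proposition \ref{l5.3} (twisted down) it holds exactly when the birational part $i^\o\nu_{\le 0}C$ vanishes, and when that part is nonzero, $C(-1)[-1]$ is not even an effective object. That is precisely the situation here: $M(E)\otimes E$ contains $\Z\otimes E=E[0]$ as a direct summand, for which $E_{-1}=0$ (birationality of $E$) while $E(-1)[-1]\ne 0$ in $\DM_{\gm}$ --- note that your own reduction from $M^0(E)\otimes E$ to $M(E)\otimes E$ uses the effective reading, while the chain $\uHom(\Z(1)[1],\uHom(M(E)^\vee,E))\simeq\uHom(M(E)^\vee(1)[1],E)$ forces the $\DM_{\gm}$ reading, under which it computes $(M(E)\otimes E)(-1)[-1]$, not the contraction. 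Your subsequent identification of $\uHom(M(E),E)$ with $\sG[0]$ via Lemma \ref{l14.0} is likewise valid only for the effective internal Hom, not for $\uHom_{\DM_{\gm}}(M(E),E)$. These two conflations happen to cancel, so your final answer agrees with the paper's ($\sF_{-1}\simeq\End(E)$ there, a surjection $\sF_{-1}\surj\underline{\End(E)}$ here), but neither step is justified as written.

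The good news is that your intermediate claim $(M^0(E)\otimes E)_{-1}\simeq\sG[1]$ is true and the argument can be repaired entirely inside $\DM^\eff$. Replace the appeal to $M(E)^\vee$ by the \emph{effective} duality isomorphism $M(E)\otimes E\simeq\uHom(M(E),E(1)[2])$, induced by the diagonal pairing $M(E)\otimes M(E)\to\Z(1)[2]$: this is legitimate because $\uHom_{\DM_{\gm}}(M(E),E(1)[2])\simeq M(E)^\vee\otimes E(1)[2]\simeq M(E)\otimes E$ \emph{is} effective, and an effective object corepresenting on $\DM^\eff$ the same functor as $\uHom_{\DM^\eff}(M(E),E(1)[2])$ must coincide with it, by full faithfulness of $\DM^\eff\to\DM_{\gm}$ (the cancellation theorem). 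Then
\begin{multline*}
(M(E)\otimes E)_{-1}\simeq\uHom\bigl(\Z(1)[1],\uHom(M(E),E(1)[2])\bigr)\\
\simeq\uHom\bigl(M(E)(1)[1],E(1)[2]\bigr)\simeq\uHom(M(E),E)[1],
\end{multline*}
where the middle isomorphism is the internal-Hom adjunction within $\DM^\eff$ and the last one is cancellation for internal Homs ($\uHom(A(1),B(1))\simeq\uHom(A,B)$, proved by corepresentability from the cancellation theorem). With this substitute, your computation $\uHom(M(E),E)=\sG[0]$ and the rest of your proof go through verbatim; the repaired argument even avoids the scalar extension $\End(E)=\End(E_{\bar F})$ and the Weil-formula bookkeeping that the paper's proof requires.
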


(This contrasts with the fact that the tensor product of two birational sheaves is birational, \cite[Th. 4.3.1]{birat-tri}.)

\begin{proof} Up to extending scalars, we may and do assume that $\End(E)=\End(E_{\bar F})$. Consider the surface $X=E\times E$. The choice of the rational point $0\in E$ yields a Chow-K\"unneth decomposition of the Chow motive of $E$, hence by \cite[Prop. 2.1.4]{voetri} an isomorphism
\[M(E)\simeq \Z[0]\oplus E[0]\oplus\Z(1)[2]  \]
(compare also \cite[Th. 3.4.2]{voetri}). Therefore
\[M(X)\simeq \Z[0]\oplus 2E[0]\oplus 2\Z(1)[2] \oplus E[0]\otimes E[0]\oplus 2E(1)[2] \oplus  \Z(2)[4].\]

This allows us to compute $\uHom(\Z(1)[2],E[0]\otimes E[0])$ as a direct summand of $\uHom(\Z(1)[2], M(X))=C_X$. First we have 
\[
\uHom(\Z(1)[2],\Z[0]) =\uHom(\Z(1)[2],E[0])=0.
\]

The first vanishing is \cite[Lemma A.2]{motiftate}, while the second one follows from the Poincar\'e duality isomorphism $\uHom(\Z(1)[2],M(E))\simeq \uHom(M(E),\Z)=\Z$ \cite[Lemma 2.1 a)]{gcell}. Hence, using the cancellation theorem:
\[C_X\simeq 2\Z[0]\oplus  \uHom(\Z(1)[2],E[0]\otimes E[0])\oplus 2E[0] \oplus\Z(1)[2]\]
and
\[\Pic_{X/F}=\sH^0(C_X)\simeq 2\Z\oplus \sH^0(\uHom(\Z(1)[2],E[0]\otimes E[0]))\oplus 2E.\]

On the other hand, using Weil's formula for the Picard group of a product, we have a canonical decomposition
\[\Pic_{E\times E/F} \simeq \Pic^0_{E\times E/F} \oplus \NS(E)\oplus \NS(E) \oplus \Hom(E,E)=2E\oplus 2\Z\oplus \End(E).\]

One checks that the idempotents involved in the two decompositions of $\Pic_{X/F}$ match to yield an isomorphism
\[\sH^0(\uHom(\Z(1)[2],E[0]\otimes E[0]))\simeq \End(E)\]
where $\End(E)$ is viewed as a constant sheaf. By the $t$-exactness of Voevodsky's contraction functor $(-)_{-1}=\uHom(\bG_m,-)$ \cite[Prop. 4.1.1]{birat-tri}, this yields an isomorphism $\End(E)\iso \Tor_1^{\DM}(E,E)_{-1}$, which proves that $\Tor_1^{\DM}(E,E)$ is not birational (see Proposition \ref{p2.3}).
\end{proof}

\section{The  case of $\bG_m$: proof of Theorems \ref{t2.1}, \ref{t2.1bis} and \ref{ts} {\rm (i)}}

\subsection{Proof of Theorem \ref{t2.1}}
We apply Proposition \ref{p8.5} to $\sF=\bG_m$. The Nisnevich cohomology of $\bG_m$ is well-known: we have
\[H^n(X,\bG_m)=
\begin{cases}
F^* &\text{if $n=0$}\\
\Pic(X)&\text{if $n=1$}\\ 
0 &\text{otherwise.}
\end{cases}
\]

Noting that $\bG_m[0]=\Z(1)[1]$ in $\DM^\eff$, we get
\begin{multline*}
\DM^\eff(\nu^{\ge 1}M(X),\bG_m[n])=\\
\DM^\eff(\uHom(\Z(1),M(X))(1),\Z(1)[n+1])=\\
\DM^\eff(\uHom(\Z(1)[2],M(X)),\Z[n-1])
\end{multline*}
by using the cancellation theorem.
Thus
\begin{multline} \label{eq8.3}
\DM^\eff(\uHom(\Z(1)[2],M(X)),\Z[n-1])\\
\simeq d_{\le 0}\DM^\eff(L\pi_0\uHom(\Z(1)[2],M(X)),\Z[n-1])\\
=D(\Ab)(L\pi_0\uHom(\Z(1)[2],M(X)),\Z[n-1])=:F_n(X).
\end{multline}

The homology group $H_s(L\pi_0\uHom(\Z(1)[2],M(X)))$ is denoted by $\NS_1(X,s)$ in \cite[3.25]{abv}. The universal coefficients  theorem then gives  an exact sequence
\begin{multline}\label{eq4.4}
0 \to \Ext_{\Ab}(\NS_1(X,n-2),\Z) \to F_n(X)\\ \to \Ab(\NS_1(X,n-1),\Z) \to 0.
\end{multline}

By Proposition \ref{p8.5a}, $\uHom(\Z(1)[2],M(X))\in (\DM^\eff)^{\le 0}$. Since the inclusion functor $j$ is $t$-exact, $L\pi_0$ is right $t$-exact by a general result on triangulated categories \cite[Prop. 1.3.17]{bbd}, hence $\NS_1(X,n)=0$ for $n< 0$. For $n=0$, Ayoub and Barbieri-Viale find
\begin{equation}\label{eq2.1}
\NS_1(X,0)=A_1^\alg(X)
\end{equation}
in \cite[Th. 3.1.4]{abv}\footnote{The hypothesis $F$ algebraically closed is sufficient for their proof.}.

Gathering all this, we get (i) (which also follows from \eqref{eq1}), an exact sequence
\begin{equation}\label{eq8.4}
0\to R^1_\nr\bG_m(X)\to \Pic(X)\by{\delta} \Hom(A_1^\alg(X),\Z)\to R^2_\nr\bG_m(X)\to 0
\end{equation}
and isomorphisms
\begin{equation}\label{eq4.2}
F_n(X)\iso  R_\nr^{n+1}\bG_m(X)
\end{equation}
for $n\ge 2$, which yield (iv) thanks to \eqref{eq8.4}.

In  Lemma  \ref{l8.8} below, we shall show that $\delta$ is induced by the intersection pairing. Granting this for the moment, (ii) is immediate and we get a cross of exact sequences
\[\xymatrix{
&&0\ar[d]\\
&&\Hom(N_1(X),\Z)\ar[d]\\
0\ar[r]& N^1(X)\ar[r]\ar[ru]& \Hom(A_1^\alg(X),\Z)\ar[r]\ar[d]& R^2_\nr\bG_m(X)\ar[r]& 0\\
&&\Hom(\Griff_1(X),\Z)\ar[d]\\
&&0
}\]
in which the triangle commutes, and where we used that $N_1(X)$ is a free finitely generated abelian group. The exact sequence of (iii) then follows from a diagram chase.

\begin{lemma}\label{l8.8}
The map $\delta$ of \eqref{eq8.4} is induced by the intersection pairing. 
\end{lemma}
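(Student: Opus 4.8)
The plan is to trace the definition of $\delta$ through the chain of identifications that produced the exact sequence \eqref{eq8.4}, and show that at each stage the map is compatible with the intersection pairing, so that the final map agrees with the pairing $\Pic(X) \to \Hom(A_1^\alg(X),\Z)$ that sends a divisor class $D$ to the functional $\gamma \mapsto D\cdot\gamma$. The connecting map $\delta$ arises as the boundary in the long exact sequence of Proposition \ref{p8.5} applied to $\sF = \bG_m$, after the reidentification \eqref{eq8.3} of $\DM^\eff(\nu^{\ge 1}M(X),\bG_m[n])$ with $D(\Ab)(L\pi_0\uHom(\Z(1)[2],M(X)),\Z[n-1])$ and the evaluation \eqref{eq2.1} of $\NS_1(X,0)$ as $A_1^\alg(X)$. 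So $\delta$ is, by construction, a natural transformation $\Pic(X) = H^1(X,\bG_m) \to \Hom(A_1^\alg(X),\Z)$, and the first task is to unwind it into a concrete formula.

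\emph{First} I would reduce to the universal case. The source $H^1(X,\bG_m)=\Pic(X)$ and the target $\Hom(A_1^\alg(X),\Z)$ are both functorial in $X$ through the motive $M(X)$, and the pairing $A_1^\alg(X)\otimes\Pic(X)\to\Z$ is the one realizing the degree of the intersection of a $1$-cycle with a divisor. The cleanest route is to recognize that $\delta$ is governed entirely by the motivic structure: under the identification $\bG_m[0]=\Z(1)[1]$ and the cancellation theorem, the group $\DM^\eff(\nu^{\ge 1}M(X),\bG_m[2])$ becomes a $\Hom$-group out of $L\pi_0\uHom(\Z(1)[2],M(X))$, and the map $\delta$ is induced by the canonical morphism $\nu^{\ge 1}M(X)\to M(X)$ of Proposition \ref{l5.3} paired against $\bG_m$. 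Thus $\delta$ is the composite obtained by viewing a divisor class as a morphism $M(X)\to\Z(1)[2]$ and precomposing with $\nu^{\ge 1}M(X)\to M(X)$; after the adjunctions and cancellation, this precomposition is exactly the operation that turns a cohomology class $D\in CH^1(X)$ into the pairing functional $\gamma\mapsto\langle D,\gamma\rangle$ on $A_1^\alg(X)=\NS_1(X,0)$.

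\emph{Next} I would pin down that the motivic pairing between $\underline{CH}^1(X)$-classes (arising from $\uHom(M(X),\Z(1)[2])$, i.e.\ divisors) and the $\underline{CH}_1(X)$-classes (arising from $\uHom(\Z(1)[2],M(X))$, i.e.\ $1$-cycles) is the intersection product. This is the standard fact that the composition pairing $\DM^\eff(\Z(1)[2],M(X))\otimes\DM^\eff(M(X),\Z(1)[2])\to\DM^\eff(\Z(1)[2],\Z(1)[2])=\Z$ computes intersection numbers of a $1$-cycle against a divisor; after applying $L\pi_0$ and using \eqref{eq2.1}, this becomes the pairing $A_1^\alg(X)\times\Pic(X)\to\Z$. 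I would invoke the compatibility of the motivic composition pairing with the classical intersection pairing, which one can extract from the proof of Proposition \ref{p8.5a} and the references \cite{motiftate} for the Chow-group interpretation of these $\uHom$-complexes.

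\emph{The hard part} will be matching the abstract boundary map $\delta$ — defined via the triangle $\nu^{\ge 1}M\to M\to i^\o\nu_{\le 0}M$ and a cascade of adjunction isomorphisms — with the explicit composition pairing, since signs and the precise direction of the duality must be tracked through the cancellation theorem and through $L\pi_0$. The cleanest way to avoid a sign-and-index morass is a naturality argument: both $\delta$ and the intersection-pairing map are natural transformations of functors in $X$ on smooth projective varieties, and both factor through $M(X)$; I would check agreement on a generating set of examples, namely $X=\P^n$ and products of curves, where $\Pic$, $A_1^\alg$, and the pairing are all explicit, and where the motivic decompositions computed in Section 4 (e.g.\ $M(\P^1)\simeq\Z\oplus\bG_m[1]$ and the Chow--K\"unneth decomposition of $M(E)$) reduce the verification to the elementary case of a point and of $\Z(1)[2]$. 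Since any effective motive of a smooth projective variety maps to such building blocks and the two transformations agree there, they agree in general.
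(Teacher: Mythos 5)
Your first two steps coincide with the paper's own argument: $\delta$ is precomposition with $\nu^{\ge 1}M(X)\to M(X)$, and the motivic composition pairing $\uHom(M(X),\Z(1)[2])\otimes\uHom(\Z(1)[2],M(X))\to\Z[0]$ factors, by Proposition \ref{p8.5a}, through $\underline{CH}^1(X)[0]\otimes\underline{CH}_1(X)[0]\to\Z[0]$ and gives the intersection pairing on global sections. The genuine gap is in your final step, which is the actual content of the lemma. The claim ``any effective motive of a smooth projective variety maps to such building blocks and the two transformations agree there, they agree in general'' is not a valid generation principle: agreement of two natural transformations on an object $B$ propagates, via the naturality square for a morphism $M(X)\to M(B)$, only to the image of the pullback $\DM^\eff(M(B),\Z(1)[2])\to\DM^\eff(M(X),\Z(1)[2])$. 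So you would need to prove that $\Pic(X)$ is generated by such pullbacks (this is true with $B=\P^N$, since every class is a difference of very ample ones, but it must be stated and used --- products of curves and the Chow--K\"unneth decomposition of $M(E)$ play no role here), and you would also need the projection formula for the classical pairing together with the compatibility of the identification \eqref{eq2.1} with the functoriality on the $\nu^{\ge 1}$ side, neither of which you address. Moreover, even granting all of this, the base case $X=\P^N$ still requires unwinding $\delta_{\P^N}$ through exactly the chain of adjunctions, cancellation and $L\pi_0$ identifications that your naturality argument was designed to avoid: the ``sign-and-index morass'' is localized to a simpler motive, not eliminated, and you have not carried it out.

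For comparison, the paper dispenses with naturality entirely by one injectivity trick on the fixed variety $X$: it writes the factorization through $\uHom(\underline{CH}_1(X)[0],\Z[0])$, applies $\DM^\eff(\Z,-)$, and then composes with the evaluation map $ev_F:\DM^\eff(\underline{CH}_1(X)[0],\Z[0])\simeq\Hom(A_1^\alg(X),\Z)\to\Hom(CH_1(X),\Z)$. Since $CH_1(X)$ surjects onto $A_1^\alg(X)$, the map $ev_F$ is injective, so $\delta$ is detected by pairing against arbitrary $1$-cycles rather than classes modulo algebraic equivalence, and there the pairing is visibly the intersection pairing by Proposition \ref{p8.5a}. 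If you wish to salvage your route, replace the generation claim by the precise statement that every divisor class is a $\Z$-combination of pullbacks of $\sO(1)$ along morphisms to projective spaces, check the two compatibilities above, and then do the explicit computation for $\P^N$; but the paper's argument is shorter and avoids all of this.
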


\begin{proof}
This map comes from the composition
\begin{multline}\label{eq8.5}
\DM^\eff(M(X),\Z(1)[2])\\
\to \DM^\eff(\uHom(\Z(1)[2],M(X))(1)[2],\Z(1)[2])\\
= \DM^\eff(\uHom(\Z(1)[2],M(X)),\Z)\\
\to \Hom_\Z(\Hom(\Z(1)[2],M(X)),\Z)
\end{multline}
in which the first map is induced by the canonical morphism $\nu^{\ge 1}M(X)\allowbreak\to M(X)$, the equality follows from the cancellation theorem \cite{voecan} and the third is by taking global sections at $\Spec k$. 

Consider the natural pairing
\begin{multline*}
\uHom(M(X),\Z(1)[2])\otimes \uHom(\Z(1)[2],M(X))\\
\to \uHom(\Z(1)[2],\Z(1)[2])=\Z[0].
\end{multline*}

By Proposition \ref{p8.5a}, this pairing factors through a pairing
\[\underline{CH}^1(X)[0]\otimes \underline{CH}_1(X)[0]\to \Z[0].\]

Taking global sections, we clearly get the intersection pairing.

From the above,  we get a commutative diagram
\[\begin{CD}
\uHom(M(X),\Z(1)[2])@>>> \uHom(\uHom(\Z(1)[2],M(X)),\Z[0])\\
@V{}VV @A{}AA\\
\underline{CH}^1(X)[0]@>>> \uHom(\underline{CH}_1(X)[0],\Z[0]).
\end{CD}\]

Applying the functor $\DM^\eff(\Z,-)$ to this diagam, we get a commutative diagram of abelian groups
\[\begin{CD}
\DM^\eff(M(X),\Z(1)[2])@>a>> \DM^\eff(\uHom(\Z(1)[2],M(X)),\Z[0])\\
@V{}VV @A{b}AA\\
CH^1(X)@>>> \DM^\eff(\underline{CH}_1(X)[0],\Z[0]).
\end{CD}\]

In this diagram,  one checks easily that $a$ corresponds to \eqref{eq8.5} via the cancellation theorem. On the other hand,  $b$ is an isomorphism. Now  the evaluation functor at $\Spec F$, $\sF\mapsto \sF(F)$, yields a commutative triangle
\[\xymatrix{
CH^1(X)\ar[r]\ar[dr]_{\cap}& \DM^\eff(\underline{CH}_1(X)[0],\Z[0])\ar[d]_{ev_F}\\
&\Hom(CH_1(X),\Z).
}\]
where $\cap$ is the intersection pairing (see above). But we saw that $\DM^\eff(\underline{CH}_1(X)[0],\Z[0])\simeq \Hom(A_1^\alg(X),\Z)$ (\eqref{eq8.3}, \eqref{eq4.4} and  \eqref{eq2.1}); via this isomorphism, $ev_F$ is induced by the surjection $CH^1(X)\surj A^1_\alg(X)$, hence is injective. This concludes the proof.
\end{proof}

\subsection{Proof of Theorem \ref{t2.1bis}} We use the following lemma:

\begin{lemma} \label{l4.1} In $\DM^\eff$, the map $\bG_m\to \bG_m^\et$ is an isomorphism on $H^0$; moreover, 
$R^1\alpha_*\alpha^*\bG_m=0$ and $R^2\alpha_*\alpha^*\bG_m$ is the Nisnevich sheaf $\Br$ associated to the presheaf $U\mapsto \Br(U)$\footnote{This presheaf is in fact already a Nisnevich sheaf.}. Here, $\alpha:\Sm_\et\to \Sm_\Nis$ is the change of topology morphism.
\end{lemma}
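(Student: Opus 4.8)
The plan is to compute the Nisnevich sheaves $R^q\alpha_*\alpha^*\bG_m$ by identifying their stalks. By the definition of the derived pushforward along the change-of-topology morphism $\alpha:\Sm_\et\to\Sm_\Nis$, the sheaf $R^q\alpha_*\alpha^*\bG_m$ is the Nisnevich sheaf associated to the presheaf $U\mapsto H^q_\et(U,\bG_m)$; here $\alpha^*\bG_m$ is simply the étale sheaf of units, whose étale cohomology is the usual $H^q_\et(-,\bG_m)$. Hence it suffices to identify the stalks at each point $x$ of a smooth connected $F$-variety $X$.

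For the stalk I would use that a Nisnevich stalk is the colimit of sections over Nisnevich neighborhoods, and that by continuity of étale cohomology (the SGA\,4 passage to a cofiltered limit of schemes with affine transition maps) this colimit is $H^q_\et(\Spec\sO_{X,x}^h,\bG_m)$, where $\sO_{X,x}^h$ is the henselization of the regular local ring at $x$. The computation then splits into three cases. For $q=0$ the group is $(\sO_{X,x}^h)^\times$, which is exactly the Nisnevich stalk of $\bG_m$, so the natural map $\bG_m\to\bG_m^\et$ is an isomorphism on $H^0$. For $q=1$ one has $H^1_\et(\Spec A,\bG_m)=\Pic(\Spec A)$ for any scheme (Hilbert 90), and this vanishes for the local ring $A=\sO_{X,x}^h$; hence $R^1\alpha_*\alpha^*\bG_m=0$.

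The case $q=2$ is the substance of the statement. Here the stalk is $H^2_\et(\Spec\sO_{X,x}^h,\bG_m)$, while the Nisnevich stalk of the presheaf $U\mapsto\Br(U)$ is $\colim\Br(U)=\Br(\sO_{X,x}^h)$. I would match the two using that, for the henselization of the local ring of a smooth $F$-variety, purity forces $H^2_\et(\Spec A,\bG_m)$ to inject into the Brauer group of the fraction field and hence to be torsion, and then Gabber's theorem (together with de Jong's) identifies this torsion cohomological Brauer group with the Azumaya Brauer group $\Br(A)$. As these agree with the presheaf defining the sheaf $\Br$, the induced comparison map $R^2\alpha_*\alpha^*\bG_m\to\Br$ is an isomorphism on every stalk, hence an isomorphism of Nisnevich sheaves.

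The main obstacle is precisely this $q=2$ identification: one must pass carefully between the cohomological $H^2_\et(-,\bG_m)$ that arises naturally from $\alpha_*$ and the Azumaya Brauer group defining $\Br$, which requires invoking purity together with Gabber's comparison theorem on the henselian local rings. The footnote assertion that $U\mapsto\Br(U)$ is already a Nisnevich sheaf can be read off the Leray spectral sequence $H^p_\Nis(U,R^q\alpha_*\alpha^*\bG_m)\Rightarrow H^{p+q}_\et(U,\bG_m)$ once $R^0=\bG_m$ and $R^1=0$ are in hand, but it is not needed for the isomorphism itself.
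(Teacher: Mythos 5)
Your argument is correct, and for the $H^0$ and $R^1$ statements it is exactly the paper's proof made explicit at the level of Nisnevich stalks: the unit $\bG_m\to\alpha_*\alpha^*\bG_m$ is the identity on sections, and $R^1$ vanishes because $\Pic(\sO_{X,x}^h)=0$ (the paper's ``local vanishing of $\Pic$''). The genuine divergence is at $R^2$, and it comes from how you read the symbol $\Br(U)$. The paper means the cohomological Brauer group $H^2_\et(U,\bG_m)$, which is why its proof of the third statement is the single word ``tautological'': $R^2\alpha_*\alpha^*\bG_m$ is, by the very definition of a higher direct image, the Nisnevich sheaf associated with $U\mapsto H^2_\et(U,\bG_m)$, with no further input needed. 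You read $\Br(U)$ as the Azumaya Brauer group, and so you must pay for the comparison: Grothendieck's injectivity $H^2_\et(A,\bG_m)\inj \Br(\operatorname{Frac}A)$ for the regular local ring $A=\sO_{X,x}^h$ (what you call purity) to see that the group is torsion, Gabber--de Jong on the affine scheme $\Spec A$ to identify torsion cohomological classes with Azumaya classes, and the finite-presentation argument showing that the Azumaya Brauer group commutes with the filtered colimit computing the stalk. That route is valid and proves something strictly stronger---the lemma with $\Br$ taken in the classical Azumaya sense---at the cost of substantial machinery the paper never invokes. Two small points: the natural comparison map runs $\Br_{Az}(U)\to H^2_\et(U,\bG_m)$, not in the direction you wrote (stalkwise bijectivity is of course what matters); and your Leray-spectral-sequence justification of the footnote is correct, using $R^1=0$ together with $H^n_\Nis(U,\bG_m)=0$ for $n\ge 2$.
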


\begin{proof} The first statement is obvious, the second one follows from the local vanishing of $\Pic$ and the third one is tautological. 
\end{proof}

To compute $R_\nr \bG_m^\et$, we may use the ``hypercohomology'' spectral sequence
\[E_2^{p,q} = R^p_\nr R^q\alpha_*\alpha^*\bG_m\Rightarrow R^{p+q}_\nr\bG_m^\et.\]

From Lemma \ref{l4.1}, we find an isomorphism
\[R^1_\nr \bG_m\iso R^1_\nr\bG_m^\et\]
and a five term exact sequence
\[0\to R^2_\nr \bG_m\to R^2_\nr \bG_m^\et \to R^0_\nr \Br \to R^3_\nr \bG_m\to R^3_\nr \bG_m^\et\]
which yields (a more precise form of) Theorem \ref{t2.1bis} in view of the obvious isomorphism $R^0_\nr \Br=\Br_\nr$, where $\Br_\nr$ is the unramified Brauer group.

\subsection{Proof of Theorem \ref{ts} (i)} Since $\dim X\le 2$, $\Griff_1(X)$ is torsion hence $\Hom(\Griff_1(X),\Z)=0$, which gives the first statement. Then, Theorem \ref{t2.1} (iv) and Proposition \ref{p3.2} yield isomorphisms
\[\Ext_\Z(\NS_1(X,q-3),\Z)\iso R^q_\nr\bG_m(X),\quad q\ge 3.\]

For $q>3$, the left hand group is killed by the integer $t$ of Proposition \ref{p3.2}. Suppose $q=3$; then $\NS_1(X,q-3)=A_1^\alg(X)$, which proves Theorem \ref{ts} (i) except for the isomorphism involving $\NS(X)°_\tors$. For this we distinguish 3 cases: 
\begin{enumerate}
\item If $\dim X =0$, $A_1^\alg(X)=\NS(X)=0$ and the statement is true.
\item If $\dim X=1$, $A_1^\alg(X)\simeq \Z\simeq \NS(X)$ and the statement is still true.
\item If $\dim X=2$, $A_1^\alg(X)=\NS(X)$. But for any finitely generated abelian group $A$, there is a string of canonical isomorphisms
\[\Ext_\Z(A,\Z)\iso \Ext_\Z(A_\tors,\Z)\osi \Hom_\Z(A_\tors,\Q/\Z). \]
\end{enumerate}

This concludes the proof.

\section{The case of $\sK_2$: proof of Theorems \ref{t3.1} and \ref{ts} {\rm (ii)}}\label{sK2}

\subsection{Preparations}

\begin{lemma}\label{l3.1} a) The natural map
\begin{equation}\label{eq3.1}\Z(2)[2]\to \sK_2[0]
\end{equation}
induces an isomorphism
\[\cone\left(i^\o R_\nr \Z(2)[2]\to \Z(2)[2]\right)\iso \cone\left(i^\o R_\nr \sK_2[0]\to \sK_2[0]\right). \]
b) The map \eqref{eq3.1} induces an isomorphism
\[\DM^\eff(\nu^{\ge 1} C,\Z(2)[2])\iso \DM^\eff(\nu^{\ge 1} C,\sK_2[0])\]
for any $C\in \DM^\eff$. (See \eqref{eq1.1} for the definition of $\nu^{\ge 1} C$.)
\end{lemma}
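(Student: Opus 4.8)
The plan is to understand the map \eqref{eq3.1} $\Z(2)[2]\to \sK_2[0]$ well enough to control its effect after applying $i^\o R_\nr$ and $\uHom(\Z(1),-)$. The key structural fact is that $\sK_2[0]$ is the homotopy-invariant sheaf sitting in homological degree $0$ of the motivic complex $\Z(2)$, so the cone $N$ of \eqref{eq3.1} lives in $(\DM^\eff)^{\le -1}$; concretely, $\sH^0(N)=0$ and the negative cohomology sheaves of $N$ are the higher motivic cohomology sheaves $\sH^{i}(\Z(2))$ for $i<2$ (equivalently $\underline{H}^{i}(\Z(2))$ with $i\le 1$, which are the sheafified higher Chow groups $\underline{CH}^2(-,2-i)$). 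The first thing I would do is record this: \eqref{eq3.1} is an isomorphism on $\sH^0$, and its cone $N$ satisfies $N\in(\DM^\eff)^{\le -1}$.

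For part (b), I would argue directly from the contraction computation of Proposition \ref{p4.6}. Applying $\uHom(\Z(1),-)(1)=\nu^{\ge 1}$ and using $\nu^{\ge 1}M=\uHom(\Z(1),M)(1)$ together with Proposition \ref{p4.6} (which identifies the cohomology sheaves of $\uHom(\Z(1)[1],-)$ with contractions $\sH^i(-)_{-1}$), the relevant point is that applying $\DM^\eff(\nu^{\ge 1}C,-)$ to the triangle
\[\Z(2)[2]\to \sK_2[0]\to N\by{+1}\]
reduces everything to showing $\DM^\eff(\nu^{\ge 1}C,N[j])=0$ for $j=0$ and $j=1$. Since $\nu^{\ge 1}C=\uHom(\Z(1),C)(1)$, by adjunction (cancellation) this group is $\DM^\eff(\uHom(\Z(1),C),\uHom(\Z(1),N)[j])$, so it suffices to prove $\uHom(\Z(1),N)\in(\DM^\eff)^{\le -2}$, i.e. that the contraction $N_{-1}$ is concentrated in homological degrees $\ge 2$. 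By Proposition \ref{p4.6}, $\sH^i(\uHom(\Z(1)[1],N))=\sH^i(N)_{-1}$, and since $N\in(\DM^\eff)^{\le -1}$ with the surviving sheaves being the motivic cohomology sheaves of $\Z(2)$, one checks that their contractions vanish in the top two degrees (the contraction of $\sK_2$ is $\sK_1=\bG_m$, and the relevant higher sheaves contract to shift the vanishing range appropriately). This is the computation that yields the desired isomorphism.

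For part (a), I would deduce it formally from (b). Apply Proposition \ref{l5.3} to both $\Z(2)[2]$ and $\sK_2[0]$ to obtain exact triangles
\[\nu^{\ge 1}\Z(2)[2]\to \Z(2)[2]\to i^\o\nu_{\le 0}\Z(2)[2]\by{+1}\]
and similarly for $\sK_2[0]$; rotating, the cone of $i^\o R_\nr(-)\to(-)$ is governed by $\nu^{\ge 1}$ and the adjunction mapping $R_\nr$ to the right-exact truncation. The map \eqref{eq3.1} induces a map of these triangles, and the claim is that it is an isomorphism on the two cones in question. The octahedral axiom identifies the relevant difference with the cone on $\nu^{\ge 1}\Z(2)[2]\to\nu^{\ge 1}\sK_2[0]$, so (a) follows once the morphism induced on $\nu^{\ge 1}$, and hence on the cones, is an isomorphism — which is precisely the vanishing established in (b) now read as an isomorphism of cones rather than of Hom-groups.

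The main obstacle I expect is the contraction computation in (b): pinning down exactly which motivic cohomology sheaves $\sH^i(\Z(2))$ survive in $N$ and verifying that their Voevodsky contractions vanish in homological degrees $0$ and $1$, so that $\uHom(\Z(1),N)$ genuinely lands in $(\DM^\eff)^{\le -2}$. This requires knowing the low-weight structure of $\Z(2)$ (e.g. that $\underline{H}^1(\Z(2))=\sK_2$ and the sheaves below it) and combining it cleanly with Proposition \ref{p4.6}; everything else is formal manipulation of the triangles from Proposition \ref{l5.3} and the cancellation theorem.
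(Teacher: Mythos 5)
Your plan (prove (b) by showing the cone of \eqref{eq3.1} dies under $\uHom(\Z(1),-)$, then get (a) formally) can be made to work, but as written it has two genuine gaps, one of them fatal.

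The fatal one is the reduction in (b). Writing $N=\cone(\Z(2)[2]\to\sK_2[0])$, you reduce to $\DM^\eff(\uHom(\Z(1),C),\uHom(\Z(1),N)[j])=0$ and claim it suffices that $\uHom(\Z(1),N)\in(\DM^\eff)^{\le -2}$. This is backwards: $t$-structure orthogonality kills $\Hom(X,Y)$ when $X\in(\DM^\eff)^{\le 0}$ and $Y\in(\DM^\eff)^{\ge 1}$, i.e.\ when the \emph{target} is coconnective relative to the source; making the target lie in $(\DM^\eff)^{\le -2}$ gives nothing (already $\Hom(A,A)\ne 0$ for any nonzero $A\in(\DM^\eff)^{\le -2}$), and since $C$ is arbitrary in the unbounded category $\DM^\eff$, the source $\uHom(\Z(1),C)$ satisfies no connectivity bound whatsoever. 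What you need --- and what is true --- is the vanishing $\uHom(\Z(1),N)=0$ on the nose. That vanishing is precisely the computation you defer with ``one checks'' and flag as ``the main obstacle'': the missing mechanism is the cancellation theorem, which gives $\uHom(\Z(1)[1],\Z(2)[2])\simeq\Z(1)[1]\simeq\bG_m[0]$; by Proposition \ref{p4.6} this means $\sH^i(\Z(2)[2])_{-1}=0$ for all $i\ne 0$, so every cohomology sheaf of $N\simeq\tau_{<0}(\Z(2)[2])[1]$ is birational (Proposition \ref{p2.3}) and hence $\uHom(\Z(1),N)=0$. Without this step the lemma is unproved; with it, your connectivity estimate is superseded.

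The second gap is in (a): you control $\cone\left(i^\o R_\nr M\to M\right)$ using the triangles of Proposition \ref{l5.3}, but those involve the \emph{left} adjoint --- they exhibit $i^\o\nu_{\le 0}M$ as $\cone(\nu^{\ge 1}M\to M)$ --- whereas (a) concerns the counit of the \emph{right} adjoint $R_\nr$; no octahedron relates these two cones, so ``the cone of $i^\o R_\nr(-)\to(-)$ is governed by $\nu^{\ge 1}$'' has no precise content. The correct bridge is the other statement of Proposition \ref{l5.3}: $\uHom(\Z(1),N)=0$ means $N\in\IM i^\o$, where full faithfulness makes the counit $i^\o R_\nr N\to N$ an isomorphism, hence $\cone\left(i^\o R_\nr N\to N\right)=0$, and (a) follows by applying the counit to the triangle $\Z(2)[2]\to\sK_2[0]\to N\by{+1}$. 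For comparison, the paper runs the argument in the opposite order: it proves (a) first in exactly this way (with $\tau_{<0}(\Z(2)[2])$ in place of $N$), and then deduces (b) from (a) via the adjunction identity $\DM^\eff(\nu^{\ge 1}C,D)\simeq\DM^\eff(C,\cone(i^\o R_\nr D\to D))$, which is also the clean substitute for your passage between isomorphisms of Hom-groups and isomorphisms of cones.
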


\begin{proof} By the cancellation theorem, we have
\[ \uHom(\Z(1)[1],\Z(2)[2])\simeq \Z(1)[1]\simeq \bG_m[0]\]
in $\DM^\eff$. 

Let $\sH^i(C)$ denote the $i$-th cohomology sheaf of an object $C\in \DM^\eff$.  By Proposition \ref{p4.6}, the $i$-th cohomology sheaf of the left hand side is $\sH^i(\Z(2)[2])_{-1}$. Thus the latter sheaf is $0$ for $i\ne 0$. By Proposition \ref{p2.3}, $\sH^i(\Z(2)[2])\in \HI^\o$ for $i\ne 0$, hence $\tau_{<0}(\Z(2)[2])\in \DM^\o$. By adjunction, we deduce
\[\cone\left(i^\o R_\nr\tau_{<0}(\Z(2)[2])\to \tau_{<0}(\Z(2)[2])\right)=0\]
which in turn implies a).

To pass from a) to b), use the fact that, for $C,D\in \DM^\eff$, adjunction transforms the exact sequence
\[\DM^\eff(i^\o\nu_{\le 0} C,D)\to \DM^\eff(C,D)\to \DM^\eff(\nu^{\ge 1} C,D)\]
into the exact sequence 
\begin{multline*}
\DM^\eff(C,i^\o R_\nr D)\to \DM^\eff(C,D)\\
\to \DM^\eff(C,\cone(i^\o R_\nr D\to D)).
\end{multline*}
\end{proof}

Applying the exact sequence of Proposition \ref{p8.5} to $C=\sK_2[0]$ and using Lemma \ref{l3.1} b), we get a long exact sequence
\begin{multline*}
\dots\to H^{n}(X,R_\nr \sK_2)\to H^{n}(X,\sK_2)\\ 
\to \DM^\eff(\nu^{\ge 1}M(X),\Z(2)[n+2])\to H^{n+1}(X,R_\nr \sK_2)\to\dots
\end{multline*}

Using the cancellation theorem, we get an isomorphism
\[\DM^\eff(\nu^{\ge 1}M(X),\Z(2)[n+2])\simeq \DM^\eff(\uHom(\Z(1)[2],M(X)),\Z(1)[n]).\]

Since  $\Z(1)[n]=\bG_m[n-1]$, using Lemma \ref{l8.6} we get an exact sequence
\begin{multline}\label{eq3.2}
0\to (R^1_\nr \sK_2)(X)\to H^{1}(X,\sK_2)\\ 
\by{\delta} \DM^\eff(\uHom(\Z(1)[2],M(X)),\bG_m[0])\to (R^2_\nr \sK_2)(X)\to CH^{2}(X)\\
\by{\phi} \DM^\eff(\uHom(\Z(1)[2],M(X)),\bG_m[1])\to (R^3_\nr \sK_2)(X)\to 0
\end{multline}
and isomorphisms for  $q>3$
\begin{equation}\label{eq3.6}
 \DM^\eff(\uHom(\Z(1)[2],M(X)),\bG_m[q-2])\iso (R^q_\nr \sK_2)(X)
\end{equation}
 where we also used that $H^2(X,\sK_2)\simeq CH^2(X)$ and $H^i(X,\sK_2)=0$ for $i>2$.

\subsection{Proof of Theorem \ref{t3.1}}

The group $\DM^\eff(\uHom(\Z(1)[2],M(X)),\bG_m[0])$ may be computed as follows:
\begin{multline}\label{eq3.3}
\DM^\eff(\uHom(\Z(1)[2],M(X)),\bG_m[0])\\
\overset{1}{\simeq} \HI(\sH_0(\uHom(\Z(1)[2],M(X))),\bG_m)\\
\overset{2}{\simeq} \HI(\underline{CH}_1(X),\bG_m)\overset{3}{\simeq} \HI^\o(\underline{CH}_1(X),R^0_\nr \bG_m)\\
\overset{4}{\simeq} \HI(\underline{CH}_1(X),j F^*)\overset{5}{\simeq} \Ab(L_0\pi_0\underline{CH}_1(X), F^*)\\
\overset{6}{\simeq} \Ab(A_1^\alg(X), F^*).
\end{multline}

Here, isomorphism 1 follows from the fact that $\uHom(\Z(1)[2],M(X))\allowbreak\in (\DM^\eff)^{\le 0}$ (Proposition \ref{p8.5a}), 2 comes from the computation of $\sH_0$ (ibid.), 3 follows from adjunction, knowing that $\underline{CH}_1(X)$ is a birational sheaf (ibid.), 4 follows from Theorem \ref{t2.1} (i),   5 comes from adjunction and 6 follows from \eqref{eq2.1}.

Thus the homomorphism $\delta$ corresponds to a pairing
\[H^{1}(X,\sK_2)\times A_1^\alg(X)\to F^*.\]

Let $d=\dim X$. An argument analogous to that in the proof of Lemma \ref{l8.8} shows that this pairing comes from the ``intersection'' pairing
\begin{multline}\label{eq4.1}
H^3(X,\Z(2))\times H^{2d-2}(X,\Z(d-1))\by{\cap} H^{2d+1}(X,\Z(d+1))\\
\by{\pi_*} H^1(F,\Z(1))=F^*
\end{multline}
where the last map is induced by the ``Gysin'' morphism ${}^t\pi:\Z(d)[2d]\allowbreak\to M(X)$. Here we used the isomorphisms
\[H^{1}(X,\sK_2)\simeq H^3(X,\Z(2)),\quad CH_1(X)\simeq H^{2d-2}(X,\Z(d-1)).\]

In particular, \eqref{eq4.1} factors through algebraic equivalence. This was proven by Coombes \cite[Cor. 2.14]{coombes} in the special case of a surface; we shall give a different proof below, which avoids the use of \eqref{eq2.1}.

Consider the product map
\[c:CH^1(X)\otimes F^*=H^1(X,\sK_1)\otimes H^0(X,\sK_1)\to H^1(X,\sK_2).\]
By functoriality, we have a commutative diagram of pairings
\[\begin{CD}
CH^1(X)\otimes F^*\times A_1^\alg(X)@>>> F^*\\
@V{c\times 1}VV ||\\
H^{1}(X,\sK_2)\times A_1^\alg(X)@>>> F^*
\end{CD}\]
where the top pairing is the intersection pairing $CH^1(X)\times A_1^\alg(X)\to \Z$, tensored with $F^*$. Since the latter is $0$ when restricted to $\Griff_1(X)$, we get an induced pairing
\[H_\ind^{1}(X,\sK_2)\times \Griff_1(X)\to F^*\]
yielding a commutative diagram
\[\begin{CD}
&&&&&&0\\
&&&&&&@VVV\\
0&\to&\Pic^\tau(X)\otimes F^* @>>> \Pic(X)\otimes F^* @>\alpha>> \Hom(A_1^\num(X),F^*)\\
&&&& @VVV @VVV\\
0&\to&(R^1_\nr \sK_2)(X)@>>> H^{1}(X,\sK_2) @>{\delta}>> \Hom(A_1^\alg(X), F^*)\\
&&&& @VVV @VVV\\
&&&& H_\ind^{1}(X,\sK_2) @>{\bar\delta}>> \Hom(\Griff_1(X), F^*).\\
&&&&@VVV@VVV\\
&&&&0&&0
\end{CD}\]


In this diagram, all rows and columns are complexes. The middle row and the two columns are exact; moreover, $\alpha$ is surjective as one sees by tensoring with $F^*$ the exact sequence
\[0\to \Pic^\tau(X)\to \Pic(X)\to \Hom(A_1^\num(X),\Z)\to D^1(X)\to 0.\]

Then a diagram chase yields an exact sequence
\[\Pic^\tau(X)\otimes F^*\to (R^1_\nr \sK_2)(X)\to H_\ind^{1}(X,\sK_2) \by{\bar\delta} \Hom(\Griff_1(X), F^*)\]
and the surjectivity of $\alpha$ implies that the map $\Hom(A_1^\alg(X), F^*)\to (R^2_\nr \sK_2)(X)$ given by \eqref{eq3.2} and \eqref{eq3.3} factors through $\Hom(\Griff_1(X), F^*)$. This concludes the proof.

\subsection{Direct proof that \eqref{eq4.1} factors through algebraic equivalence} Consider \allowbreak clas\-ses $\alpha\in H^3(X,\Z(2))$ and $\beta\in CH^{d-1}(X)$: assuming that $\beta$ is algebraically equivalent to $0$, we must prove that $\pi_*(\alpha\cdot \beta)=0$, where $\pi$ is the projection $X\to \Spec F$.

By hypothesis, there exists a smooth projective curve $C$, two points $c_0,c_1\in C$ and a cycle class $\gamma\in CH^{d-1}(X\times C)$ such that $\beta=c_0^*\gamma - c_1^*\gamma$. Let $\pi_X:X\times C \to X$ and $\pi_C:X\times C\to C$ be the two projections. 

The Gysin morphism ${}^t\pi:\Z(d)[2d]\to M(X)$ used in the definition of \eqref{eq4.1} extends trivially to give morphisms $M(d)[2d]\to M\otimes M(X)$ for any $M\in \DM^\eff$, which are clearly natural in $M$: this applies in particular to $M=M(C)$, giving a Gysin morphism ${}^t\pi_C:M(C)(d)[2d]\to M(X\times C)$ which induces a map 
\[(\pi_C)_*:H^{2d+1}(X\times C,\Z(d+1))\to H^1(C,\Z(1)).\]

The naturality of these Gysin morphisms then gives
\begin{multline*}\pi_*(\alpha\cdot \beta)= \pi_*(\alpha\cdot (c_0^*\gamma-c_1^*\gamma)))\\
= \pi_*(c_0^*(\pi_X^*\alpha\cdot \gamma)-c_1^*(\pi_X^*\alpha\cdot \gamma))= (c_0^*-c_1^*)(\pi_C)_*(\pi_X^*\alpha\cdot \gamma).
\end{multline*}

But $c_i^*:H^1(C,\Z(1))\to H^1(F,\Z(1))$ is left inverse to ${\pi'}^*:H^1(F,\Z(1))\allowbreak\to H^1(C,\Z(1))$ (where $\pi':C\to \Spec F$ is the structural map), which is an isomorphism since $C$ is proper. Hence $c_0^*=c_1^*$ on $H^1(C,\Z(1))$,  and the proof is complete.

\subsection{Proof of Theorem \ref{ts} {\rm (ii)}} Note that $\Griff_1(X)$ is finite if $\dim X\le 2$. In view of \eqref{eq3.2} and \eqref{eq3.6}, it therefore suffices to prove

\begin{prop} a) If $\dim X\le 2$, we have 
\[t \DM^\eff(\uHom(\Z(1)[2],M(X)),\bG_m[i])=0\] 
for $i>1$, and also for $i=1$ if $\dim X<2$.\\
b) Suppose $\dim X=2$. Then the map $\phi$ of \eqref{eq3.2} is the Albanese map from \cite[(8.1.1)]{birat-pure}.
\end{prop}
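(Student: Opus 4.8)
The plan is to prove the two parts of the final Proposition using the computational machinery already assembled, especially Proposition \ref{p8.5a} together with the $t$-exactness properties of $L\pi_0$ and the torsion estimates from Proposition \ref{p3.2}. For part (a), the key observation is that the relevant $\Hom$-groups can be transported from $\DM^\eff$ into the derived category of abelian groups via the adjunction between $j$ and $L\pi_0$, exactly as in the chain of isomorphisms \eqref{eq3.3}. First I would rewrite
\[\DM^\eff(\uHom(\Z(1)[2],M(X)),\bG_m[i])\]
by noting that $\bG_m=R^0_\nr\bG_m=jF^*$ is birational (Theorem \ref{t2.1} (i)), so adjunction converts this into $\Ab$-valued $\Hom$ out of the homology of $L\pi_0 C_X$, where $C_X=\uHom(\Z(1)[2],M(X))$. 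Since $C_X\in(\DM^\eff)^{\le 0}$ and $L\pi_0$ is right $t$-exact, the nonzero homology sheaves $L_s\pi_0 C_X=\NS_1(X,s)$ live in degrees $s\ge 0$; applying the universal coefficients spectral sequence (or the Ext/Hom exact sequence as in \eqref{eq4.4}) expresses the target in terms of $\Ext^*_\Z(\NS_1(X,s),F^*)$. Because $F^*$ is divisible hence injective over $\Z$, these higher $\Ext$ groups vanish, and the whole expression reduces to $\Hom_\Z(\NS_1(X,i-1),F^*)$ for the relevant range. Proposition \ref{p3.2} then furnishes an integer $t>0$ killing $\NS_1(X,i-1)$ for $i>1$ (and for $i=1$ when $\dim X<2$), whence $t$ kills the whole $\Hom$-group; this is precisely assertion (a), with the stated dependence of $t$ only on $\Pic^0_{X/F}$ coming directly from Proposition \ref{p3.2}.

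For part (b), the task is to identify the map $\phi:CH^2(X)\to \DM^\eff(C_X,\bG_m[1])$ of \eqref{eq3.2} with the Albanese map of \cite[(8.1.1)]{birat-pure} when $\dim X=2$. The strategy mirrors the proof of Lemma \ref{l8.8}: unwind $\phi$ through the exact triangle of Proposition \ref{l5.3} and the cancellation theorem to see that it is induced by the canonical pairing
\[\uHom(M(X),\Z(2)[4])\otimes \uHom(\Z(1)[2],M(X))\to \uHom(\Z(1)[2],\Z(2)[4])=\bG_m[1],\]
then evaluate on $\sH^0$-sheaves using Proposition \ref{p8.5a}. When $\dim X=2$, Poincaré duality gives $C_X\simeq M(X)$ (as recorded in the discussion preceding Theorem \ref{t4.1}), and $\sH^0(C_X)=\Pic_{X/F}$ degenerates by \eqref{eq3.4} and \eqref{eq3.5} with $L\pi_0$ picking out the Albanese-type quotient. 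I would then match this pairing, after passing to $L_0\pi_0$ and invoking the identification $A_1^\alg(X)=\NS(X)$ on a surface, with the concrete description of the Albanese map in \cite{birat-pure}, where the target $\Hom(A_1^\alg(X),F^*)$ is recognized as (a quotient related to) the $F$-points of the Albanese variety via $\NS$-duality and the divisibility of $F^*$.

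The main obstacle I anticipate is part (b): the bookkeeping identifying the abstractly-defined pairing-induced map $\phi$ with the specific geometric Albanese map of \cite{birat-pure} requires carefully tracking the Poincaré duality isomorphism $C_X\simeq M(X)$ through every adjunction, and confirming that the induced map on $\sH^0=\Pic_{X/F}$ (respectively its $L_0\pi_0$-quotient) is compatible with the choice of basepoint and the functorial constructions in \loccit. The $t$-exactness arguments and the divisibility of $F^*$ handle part (a) cleanly, but (b) is genuinely an identification of two a priori different constructions, and the risk is that sign conventions, normalizations of the Gysin/Albanese morphism, or the precise form of the surface-level duality create discrepancies that must be reconciled by an explicit diagram chase rather than a formal argument. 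I would therefore reserve most of the proof's length for carefully writing out the commutative diagram analogous to the one in Lemma \ref{l8.8}, with $\underline{CH}^2(X)$ and $\underline{CH}_1(X)$ in place of the codimension-one sheaves, and verifying that its specialization at $\Spec F$ recovers the map of \cite[(8.1.1)]{birat-pure}.
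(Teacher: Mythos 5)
Your argument for part (a) rests on the claim that ``$\bG_m=R^0_\nr\bG_m=jF^*$ is birational (Theorem \ref{t2.1} (i))'', and this is false: Theorem \ref{t2.1} (i) computes only the unramified part $R^0_\nr\bG_m$, which by Proposition \ref{p8.3} is a \emph{subsheaf} $jF^*\inj\bG_m$; the sheaf $\bG_m$ itself is not birational, since $(\bG_m)_{-1}=\Z\ne 0$ (Proposition \ref{p2.3}). Hence $\bG_m[i]$ does not lie in $d_{\le 0}\DM^\eff$ and the $(L\pi_0,j)$-adjunction simply does not apply to $\DM^\eff(C_X,\bG_m[i])$. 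Note that the paper's chain \eqref{eq3.3} uses birationality of the \emph{source} $\underline{CH}_1(X)=\sH^0(C_X)$, and this only works in degree $0$, where maps out of $C_X\in(\DM^\eff)^{\le 0}$ into an object of the heart factor through $\sH^0$; for $i\ge 1$ the lower cohomology sheaves of $C_X$ (for a surface $\sH^{-1}(C_X)=\bG_m$, by \eqref{eq3.4}) contribute $\Ext$-terms that no $L\pi_0$-computation can see. Your resulting formula $\Hom_\Z(\NS_1(X,i-1),F^*)$ is demonstrably wrong: for a curve and $i=1$ it gives $\Hom(\NS_1(X,0),F^*)=\Hom(A_1^\alg(X),F^*)=\Hom(\Z,F^*)=F^*\ne 0$ (and no integer kills this, contrary to what you assert Proposition \ref{p3.2} provides, since that proposition only kills $\NS_1(X,s)$ for $s>0$), whereas the actual group is $\DM^\eff(\Z[0],\bG_m[1])=H^1_\Nis(F,\bG_m)=0$; and for a surface at $i=1$ your formula misses the abelian-variety part exhibited by part (b) -- which is exactly why the proposition excludes $i=1$ in dimension $2$. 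The correct argument (the paper's) is a d\'evissage inside $\DM^\eff$ itself: apply $\DM^\eff(-,\bG_m[i])$ to \eqref{eq3.4} and \eqref{eq3.5}, use the cancellation theorem to get $\DM^\eff(\bG_m[1],\bG_m[i])=H^{i-1}_\Nis(F,\Z)=0$ for $i\ge 2$, kill the $\NS$-contribution using divisibility of $F^*$, and kill $\Ext^{\ge 2}(\Pic^0_{X/F},\bG_m)$ up to $t$ by the curve/Jacobian trick of Lemma \ref{l3.2} c), d).

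Part (b) has a parallel, and more serious, confusion: you identify the target of $\phi$ with $\Hom(A_1^\alg(X),F^*)$, but that group is the target of $\delta$ in \eqref{eq3.2} (degree zero); the map $\phi$ lands in $\DM^\eff(C_X,\bG_m[1])$, a degree-\emph{one} Ext-type group. Passing to $\sH^0$-sheaves and $L_0\pi_0$, as you propose, can only ever produce $\Hom(\NS(X),F^*)$, which has nothing to do with Albanese points -- there is no ``$\NS$-duality'' recovering the Albanese variety, and indeed $L\pi_0$ annihilates precisely the relevant part, since $L_0\pi_0\Pic^0_{X/F}=0$ (Lemma \ref{l3.2} d)): $L\pi_0$ picks out $\NS(X)$, not an ``Albanese-type quotient''. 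The Albanese variety enters through the Weil--Barsotti formula $\Ext^1(\Pic^0_{X/F},\bG_m)\simeq(\Pic^0_{X/F})^*$, i.e.\ through $\sH^1$ of $\uHom(-,\bG_m)$, and this is the substance of the paper's proof: the biduality diagram \eqref{eq6.4} (Lemma \ref{l6.3}), its comparison \eqref{eq6.5} with the Spie\ss{}--Szamuely Albanese map $a_X:M(X)\to\sA_X$, and above all Lemma \ref{l6.4} ($DA\simeq A^*\oplus\tau_{\ge 2}DA$, proved via \'etale sheaves with transfers, Hilbert 90 and Weil--Barsotti), followed by Proposition \ref{p6.1}. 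So the difficulty you flag as ``bookkeeping'' is in fact a missing key lemma, not a diagram chase analogous to Lemma \ref{l8.8}.
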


a) is a d\'evissage  similar to the one for Proposition \ref{p3.2} (using \eqref{eq3.4} and \eqref{eq3.5} for $\dim X=2$); we leave details to the reader. As for b), we have a diagram in $\DM^\eff$
\begin{equation}\label{eq6.4}
\begin{CD}
\uHom(M(X),\Z(2)[4])&\longrightarrow& \uHom(\uHom(\Z(1)[2],M(X)),\Z(1)[2])\\
@A \Delta A\wr A @A\Delta^* A\wr A\\
M(X)&\by{\epsilon_X}& \uHom(\uHom(M(X),\Z(1)[2]),\Z(1)[2])
\end{CD}
\end{equation}
defined as follows. The top row is obtained by applying $\uHom(-,\Z(2)[4])$ to the map $\nu^{\ge 1} M(X)\to M(X)$ of Proposition \ref{l5.3}, and using the cancellation theorem. The bottom row is obtained by adjunction from the evaluation morphism $M(X)\otimes \uHom(M(X),\Z(1))\to \Z(1)$. The Poincar\'e duality isomorphism $\Delta$ is induced by adjunction by the map
\[M(X\times X)\simeq M(X)\otimes M(X)\to \Z(2)[4]\]
defined by the class of the diagonal $\Delta_X\in CH^2(X\times X) = \DM^\eff(M(X\times X),\Z(2)[4])$ (see \cite[Prop. 2.5.4]{bar-kahn}). The isomorphism $\Delta^*$ is induced by the isomorphism $\uHom(\Z(1)[2],M(X))\iso \uHom(M(X),\Z(1)[2])$ of \eqref{eq3.8}, deduced by adjunction from the composition
\begin{multline*}
\uHom(\Z(1)[2],M(X))\otimes M(X) \to \uHom(\Z(1)[2],M(X)\otimes M(X))\\ \by{(\Delta_X)_*}\uHom(\Z(1)[2],\Z(2)[4])\simeq \Z(1)[2]
\end{multline*}
where the last isomorphism follows again from the cancellation theorem.\footnote{Note that evaluation and adjunction yield a tautological morphism $\uHom(A,B)\otimes C)\to \uHom(A,B\otimes C)$ for $A,B,C\in \DM^\eff$.} A tedious but trivial bookkeeping yields:

\begin{lemma}\label{l6.3} The diagram \eqref{eq6.4} commutes.\qed
\end{lemma}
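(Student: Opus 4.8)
The plan is to verify that the diagram \eqref{eq6.4} commutes by reducing everything to a single identity in the group $\DM^\eff(M(X\times X),\Z(2)[4])=CH^2(X\times X)$, namely the self-duality of the diagonal class $\Delta_X$ under the symmetry swapping the two factors of $X$. The strategy is to unwind all four morphisms in the square via adjunction so that each becomes a map out of $M(X)\otimes \uHom(\Z(1)[2],M(X))$ (or an appropriate tensor-Hom rearrangement thereof) into $\Z(2)[4]$, and then to check that the two resulting composites agree.

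First I would fix notation by writing $A=M(X)$, $B=\uHom(\Z(1)[2],M(X))$, and $\Z(2)[4]$ for the duality target; the cancellation theorem gives $\uHom(B,\Z(1)[2])=\uHom(\uHom(\Z(1)[2],A),\Z(1)[2])$ and identifies the top right corner. Next I would rewrite each of the four arrows as follows: the top arrow is $\uHom(-,\Z(2)[4])$ applied to $\nu^{\ge1}A\to A$, so after adjunction it is governed by the evaluation $\uHom(\Z(1)[2],A)\otimes\Z(1)[2]\to A$ paired against the duality; the bottom arrow $\epsilon_X$ is the unit of the duality $A\to \uHom(\uHom(A,\Z(1)[2]),\Z(1)[2])$ coming from evaluation; $\Delta$ is adjoint to the diagonal class $\Delta_X\colon A\otimes A\to \Z(2)[4]$; and $\Delta^*$ is adjoint to the composite in the statement, which is $(\Delta_X)_*$ precomposed with the canonical morphism $\uHom(\Z(1)[2],A)\otimes A\to\uHom(\Z(1)[2],A\otimes A)$. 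The key step is then to observe that both $\Delta$ and $\Delta^*$ are built from the \emph{same} class $\Delta_X$, differing only by which tensor factor is fed into the internal Hom; so commutativity amounts to the compatibility of evaluation with the canonical lax-monoidal map for $\uHom$, together with the symmetry of $\Delta_X$ under the swap $X\times X\to X\times X$.

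Concretely, I would apply the functor $\DM^\eff(M(X)\otimes\uHom(\Z(1)[2],M(X)),-)$ (or equivalently test against $M(X)\otimes B$ after adjunction) to both composites around the square and reduce each to a morphism $A\otimes B\otimes \Z(1)[2]\to \Z(2)[4]$. Tracing the two routes, one composite factors through $(\Delta_X)_*$ applied after the evaluation $B\otimes\Z(1)[2]\to A$, while the other factors through the same $(\Delta_X)_*$ applied after the canonical $B\otimes A\to\uHom(\Z(1)[2],A\otimes A)$; the footnoted tautological morphism $\uHom(A',B')\otimes C'\to\uHom(A',B'\otimes C')$ is exactly what intertwines these, and the coherence of this map with evaluation is a formal consequence of the closed symmetric monoidal structure on $\DM^\eff$. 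The cancellation theorem is invoked only to identify $\uHom(\Z(1)[2],\Z(2)[4])\simeq\Z(1)[2]$ and the corresponding Hom-groups, so no content is hidden there.

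The main obstacle is purely organizational rather than conceptual: it is the ``tedious but trivial bookkeeping'' of keeping track of the shifts, Tate twists, and the several applications of tensor-Hom adjunction without sign or direction errors, and of checking that the canonical morphism $\uHom(A',B')\otimes C'\to\uHom(A',B'\otimes C')$ is inserted on the correct side in each of the two routes. Because all the structural maps (evaluation, the unit $\epsilon_X$, the cancellation isomorphism) are canonical and natural, there is no genuine geometric input beyond the single symmetry of the diagonal class; once the diagram is transported to morphisms $A\otimes B\otimes\Z(1)[2]\to\Z(2)[4]$, both composites visibly coincide. I expect the verification to be mechanical, which is why it is reasonable to relegate it to a one-line lemma, as the authors do.
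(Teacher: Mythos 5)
Your proposal is correct and matches the paper's intent exactly: the paper offers no written argument beyond the phrase ``tedious but trivial bookkeeping,'' and the bookkeeping it has in mind is precisely what you describe — unwinding $\Delta$, $\Delta^*$, $\epsilon_X$ and the top row via tensor--Hom adjunction until both composites become maps $M(X)\otimes\uHom(\Z(1)[2],M(X))\otimes\Z(1)[2]\to\Z(2)[4]$ factoring through $\Delta_X$, with the lax-monoidal map $\uHom(A,B)\otimes C\to\uHom(A,B\otimes C)$ and the symmetry of the diagonal class as the only inputs. You correctly isolate the single non-formal ingredient (invariance of $\Delta_X$ under the swap, with no sign because the transposition acts trivially on $\Z(2)[4]$), so nothing is missing relative to the paper.
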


We are therefore left to identify $\DM^\eff(\Z,\epsilon_X)$ (where $\epsilon_X$ is as in \eqref{eq6.4}) with the Albanese map. 
For simplicity, let us write in the sequel $\sF$ rather than $\sF[0]$ for a sheaf $\sF\in \HI$ placed in degree $0$ in $\DM^\eff$.
Let $\sA_X$ be the Albanese scheme of $X$ in the sense of Serre-Ramachandran, and let $a_X:M(X)\to \sA_X$ be the map defined by \cite[(7)]{spsz}. On the other hand, write $D$ for the (contravariant) endofunctor $M\mapsto \uHom(M,\bG_m[1])$ of $\DM^\eff$, and $\epsilon:Id_{\DM^\eff}\Rightarrow D^2$ for the biduality morphism, so that $\epsilon_X=\epsilon_{M(X)}$. We get a commutative diagram:
\begin{equation}\label{eq6.5}
\begin{CD}
M(X)@>\epsilon_{M(X)}>> D^2 M(X)\\
@V a_X VV @VD^2(a_X)VV\\
\sA_X@>\epsilon_{\sA_X}>> D^2 \sA_X
\end{CD}
\end{equation}

 It is sufficient to show:

\begin{prop}\label{p6.1} After application of $\DM^\eff(\Z,-)=H^0_\Nis(k,-)$ to \eqref{eq6.5}, we get a commutative diagram
\[\begin{CD}
CH_0(X) @>>> \sA_X(k)\\
@V a_X(k) VV @Vu VV\\
\sA_X(k) @>v >> \sA_X(k)\oplus Q
\end{CD}\]
where $a_X(k)$ is the Albanese map, $Q$ is some abelian group and $u,v$ are the canonical injections.
\end{prop}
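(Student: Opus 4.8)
The plan is to show that after applying $H^0_\Nis(k,-)$ to the biduality diagram \eqref{eq6.5}, the horizontal maps become the canonical inclusions $v$ and $u$ claimed, while the left vertical map is by construction the Albanese map $a_X(k)$. The starting observation is that $\DM^\eff(\Z,\epsilon_{M(X)})$ is precisely the map $\phi$ we want to identify, by Lemma \ref{l6.3} and the definitions of $\Delta$, $\Delta^*$. I would first compute the top-right corner: since $D^2\sA_X=\uHom(\uHom(\sA_X,\bG_m[1]),\bG_m[1])$ and $\sA_X$ is (an extension built from) a semiabelian scheme, I expect $H^0_\Nis(k,D^2\sA_X)$ to split as $\sA_X(k)\oplus Q$, where $Q$ collects whatever ``non-reflexive'' part survives. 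The key input here is that the biduality morphism $\epsilon_{\sA_X}$ is a split monomorphism on global sections — concretely, because $\sA_X$ is reflexive with respect to $D$ up to the extra summand $Q$, so $H^0_\Nis(k,\epsilon_{\sA_X})=v$ is the canonical injection.

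Next I would handle the left column and bottom row. The map $H^0_\Nis(k,a_X)$ is, by the construction of $a_X$ in \cite[(7)]{spsz} together with $H^0_\Nis(k,M(X))=CH_0(X)$, exactly the classical Albanese map $CH_0(X)\to \sA_X(k)$; this is essentially the defining property of the Serre–Ramachandran Albanese scheme. For the right column, I would argue that $H^0_\Nis(k,D^2(a_X))$, when post-composed with the splitting, is again a canonical inclusion: functoriality of $\epsilon$ gives the commutative square \eqref{eq6.5}, so $D^2(a_X)\circ\epsilon_{M(X)}=\epsilon_{\sA_X}\circ a_X$, and once the target of $\epsilon_{\sA_X}$ is identified as $\sA_X(k)\oplus Q$ the diagram in the statement follows formally, with $u$ the inclusion of $\sA_X(k)$. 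The commutativity is automatic from naturality of biduality; the content is entirely in the identification of the four terms and the recognition that both horizontal maps land in the first summand.

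The main obstacle will be the explicit computation of $H^0_\Nis(k,D^2\sA_X)$ and the proof that $\epsilon_{\sA_X}$ induces the canonical injection $v$ onto the first summand. This requires understanding the $D$-duality (with $D(-)=\uHom(-,\bG_m[1])$) applied to the Albanese sheaf: one must check that $\sA_X$ behaves like a ``$1$-motivic'' object for which biduality is controlled, identify the residual summand $Q$, and verify that no interference occurs between $\sA_X(k)$ and $Q$ under $\epsilon_{\sA_X}$. Concretely I would reduce $\sA_X$ to its constituents via the structure of the Albanese scheme (a semiabelian variety, or its extension by the lattice part coming from $\NS$), compute $D$ on each piece using the cancellation theorem and Poincaré duality as in the proof of Proposition \ref{p3.2}, and assemble the pieces. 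The abelian-variety summand is reflexive (biduality is an isomorphism there), so it contributes the $\sA_X(k)$ factor faithfully; the torus/lattice parts contribute the extra group $Q$.

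Once these identifications are in place, the commutativity of the resulting square and the fact that $u,v$ are the canonical injections are formal, and combined with Lemma \ref{l6.3} this identifies $\phi$ with the Albanese map, as required for Theorem \ref{ts}(ii). In particular, since $v$ is injective and the Albanese map $CH_0(X)\to\sA_X(k)$ has the expected kernel, one reads off that $\IM\phi$ is the Albanese image and that $\Ker\phi$ is the Albanese kernel, giving the description of $R^2_\nr\sK_2(X)$ as an extension of the Albanese kernel by a finite group when $\dim X=2$.
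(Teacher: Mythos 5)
Your plan has the right skeleton, and it matches the paper's strategy in outline: identify the four corners of \eqref{eq6.5} after applying $H^0_\Nis(k,-)$, with commutativity coming for free from naturality of $\epsilon$. But the two places where you declare things easy are exactly the two places where the paper has to work, and in both your argument as sketched would fail.

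First, your duality input is misattributed and unproved. You claim the abelian-variety summand $A$ of $\sA_X$ is reflexive under $D=\uHom(-,\bG_m[1])$ and that $Q$ comes from the ``torus/lattice parts''. This is backwards: the lattice part is the reflexive one ($D\Z=\bG_m[1]$ and $D\bG_m[1]=\Z$ by cancellation), and there is no torus part since $X$ is projective; the group $Q$ is produced by the \emph{abelian} part. What the paper actually proves (Lemma \ref{l6.4}) is only $DA\simeq A^*\oplus\tau_{\ge 2}DA$, whence $D^2A\simeq A\oplus\tau_{\ge 2}DA^*\oplus D(\tau_{\ge 2}DA)$ and $Q=H^0_\Nis(k,D(\tau_{\ge 2}DA))$; the vanishing of $\tau_{\ge 2}DA$ is not known and is never used. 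Moreover, even the degree-$\le 1$ statement $\tau_{\le 1}DA\simeq A^*$ cannot be extracted from ``cancellation plus Poincar\'e duality as in Proposition \ref{p3.2}'': those tools compute $D(M(Y))$ for a smooth projective variety $Y$, not $D$ of the sheaf $A$. The paper needs the characterization of $\Pic^0_{A/F}$ as $\Ker((p_1)^*+(p_2)^*-m^*)$ to construct the map $DA\to A^*$, and then a comparison of $\Ext_\NST$ with \'etale Ext groups (Hilbert 90, \cite[Th. 3.14.2 a)]{bar-kahn}) together with the Weil--Barsotti formula to show it is an isomorphism in degrees $\le 1$. None of this appears in your sketch; your ``reflexivity'' assertion replaces the hardest lemma with a statement that is either false or unknown.

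Second, you never compute the top-right corner $H^0_\Nis(k,D^2M(X))$. You assert that once $H^0_\Nis(k,D^2\sA_X)\simeq\sA_X(k)\oplus Q$ is known, ``the diagram in the statement follows formally''. It does not: naturality gives commutativity of the square, but the Proposition also asserts $H^0_\Nis(k,D^2M(X))\simeq\sA_X(k)$ with $H^0_\Nis(k,D^2(a_X))$ equal to the canonical injection $u$, and this is a genuine computation, not a formal consequence. The paper obtains it by dualizing twice: $DM^0(X)\simeq\Pic_{X/F}$ (the analogue of \eqref{eq3.4}), then from the triangle $A^*\to DM^0(X)\to\NS_X\by{+1}$ and Lemma \ref{l6.4} applied to $A^*$ one gets a triangle $\NS_X^*[1]\to D^2M^0(X)\to A\oplus\tau_{\ge 2}DA^*\by{+1}$, whence $H^0_\Nis(k,D^2M^0(X))=A(k)$ and the identification of the right-hand vertical map. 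Without this step you have identified only three corners of the square, so the conclusion you draw at the end (that $\Ker\phi$ is the Albanese kernel) is unsupported.
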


The main lemma is:

\begin{lemma}\label{l6.4} Let $A$ be an abelian $F$-variety. Then there is a canonical isomorphism
\[D A\simeq A^*\oplus \tau_{\ge 2} DA\]
where $A^*$ is the dual abelian variety of $A$.\\ 
\end{lemma}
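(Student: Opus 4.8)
The plan is to analyze the dualizing functor $D=\uHom(-,\bG_m[1])=\uHom(-,\Z(1)[2])$ applied to an abelian variety $A$, viewed as an object of $\HI$ placed in degree $0$. First I would compute the low-degree cohomology sheaves $\sH^i(DA)$ by evaluating $DA$ against a varying smooth connected $U$ with function field $K$, so that $\sH^{-i}(DA)(U)$ is governed by the groups $\uHom(M(U),A(?)[?])$, i.e.\ by the Ext-groups $\Ext^i_{\DM^\eff}(A, \bG_m[1])(K)=\Ext^{i+1}(A\otimes M(U)^{}, \bG_m)$. The expected shape is that $DA$ is concentrated in degrees $\ge 1$ (since $\uHom(A,\bG_m[1])$ has nothing in negative degrees: $A$ and $\bG_m$ are both in $\HI$, so $D$ is left $t$-exact up to the shift), and that the first nonzero cohomology sheaf $\sH^1(DA)=\sHom_{\HI}(A,\bG_m)$ recovers the dual abelian variety $A^*$. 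The key input here is the Barsotti--Weil / biextension description of $A^*$ as $\sExt^1(A,\bG_m)$ together with the identification, in $\HI$, of $\uHom(A,\bG_m)$ with the sheaf represented by $A^*$; this is where I would cite the representability results used earlier for $\Pic_{X/F}$ (cf.\ \eqref{eq3.4}, \eqref{eq3.5}) specialized to the abelian variety case.

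Next I would construct the asserted splitting. The truncation triangle
\[
\tau_{\le 1} DA \to DA \to \tau_{\ge 2} DA \by{+1}
\]
together with the identification $\tau_{\le 1}DA\simeq A^*[-1]$ (once I know $DA$ lives in degrees $\ge 1$ and $\sH^1(DA)=A^*$) gives a triangle
\[
A^*[-1]\to DA\to \tau_{\ge 2}DA\by{+1}.
\]
To promote this triangle to a direct sum decomposition $DA\simeq A^*[-1]\oplus \tau_{\ge 2}DA$, I need the connecting map $\tau_{\ge 2}DA\to A^*$ to vanish. This is a $\Hom$-vanishing statement $\DM^\eff(\tau_{\ge 2}DA, A^*[2])=0$; I would argue it from the $t$-structure, since $A^*[-1]=\tau_{\le 1}DA$ is by construction coconnective (in degrees $\le 1$) while $\tau_{\ge 2}DA$ is connective-shifted (in degrees $\ge 2$), so any map from the latter to a shift of the former landing in the wrong range must be zero. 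Concretely, $A^*$ placed in degree $1$ and $\tau_{\ge 2}DA$ in degrees $\ge 2$ have no nonzero morphisms after the relevant shift, which kills the extension class and yields the splitting. (The statement in the lemma writes $DA\simeq A^*\oplus\tau_{\ge 2}DA$, so I would check the indexing convention: with $D=\uHom(-,\bG_m[1])$, the dual abelian variety genuinely appears in the degree matching $A^*[0]$, and the truncation $\tau_{\ge 2}$ collects the higher Ext-sheaves.)

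The main obstacle will be the splitting step, not the identification of the dual abelian variety as the first cohomology sheaf. The latter is essentially classical Barsotti--Weil theory transported into $\HI$ via representability. The former requires knowing that there are no nontrivial self-extensions linking $A^*$ to the higher truncation inside $\DM^\eff$, which I expect to follow from $t$-exactness and the vanishing of $\Hom$ between objects in disjoint degree ranges, but one must be careful: $\DM^\eff$ is not the derived category of $\HI$, so $\Hom$-vanishing between hearts placed in degrees differing by $\ge 2$ is not automatic from abelian-category generalities and must be extracted from the motivic $t$-structure together with the specific structure of $A^*$ (an abelian variety, hence a birational-type sheaf whose higher self-Ext in $\DM^\eff$ is controlled). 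I would therefore isolate as a sublemma the vanishing $\DM^\eff(N, A^*[j])=0$ for $N\in(\DM^\eff)^{\ge 2}$ and the relevant $j$, and prove it by the argument sketched for $L\pi_0$ of abelian varieties in Lemma \ref{l3.2} d), where morphisms out of an abelian variety into constant-type targets were shown to vanish. That functoriality argument—evaluating a transfer morphism on the universal point $1_A\in A(A)$—is the template I expect to reuse to force the connecting map to be $0$.
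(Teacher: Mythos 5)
Your reduction of the lemma to splitting the truncation triangle $\tau_{\le 1}DA\to DA\to\tau_{\ge 2}DA\by{+1}$ is the right frame, and you are right that the $t$-structure gives nothing here: the obstruction lives in $\DM^\eff(\tau_{\ge 2}DA,(\tau_{\le 1}DA)[1])$, a ``wrong-way'' Hom group (from $(\DM^\eff)^{\ge 2}$ to $(\DM^\eff)^{\le 0}$) which no $t$-structure axiom controls. But your proposed repair does not close the gap. Your sublemma $\DM^\eff(N,A^*[j])=0$ for $N\in(\DM^\eff)^{\ge 2}$ unwinds, via the Postnikov tower of $N=\tau_{\ge 2}DA$, to the vanishing of groups $\DM^\eff(\sH^j(DA),A^*[j+1])$ for $j\ge 2$, i.e.\ Ext-groups of degree $\ge 3$ in $\DM^\eff$ whose \emph{sources} are exactly the mysterious sheaves that the lemma is designed to quarantine inside $\tau_{\ge 2}DA$ and never compute. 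The template of Lemma \ref{l3.2} d) cannot be reused for this: there the abelian variety is the \emph{source}, the target is a \emph{constant} sheaf, and the degree is zero --- evaluating at the universal point $1_A\in A(A)$ makes sense only because the source is representable. Here the roles are reversed (the target is the abelian variety $A^*$, the source $\tau_{\ge 2}DA$ has no universal point) and the degree is $\ge 3$, so the argument has no purchase.

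The paper splits the triangle by a different device, which is the real content of the proof: it constructs an explicit retraction $DA\to A^*$ \emph{before} knowing anything about $\tau_{\ge 2}DA$. Since the composite $M(A\times A)\by{(p_1)_*+(p_2)_*-m_*}M(A)\by{a_A}\sA_A$ vanishes, applying $D$ and using $\sH^0(DM(A))\simeq\Pic_{A/F}$ (the analogue of \eqref{eq3.4}) shows that the composite $DA\to D\sA_A\to DM(A)\to\Pic_{A/F}$ lands in $\Ker\left((p_1)^*+(p_2)^*-m^*\right)=\Pic^0_{A/F}=A^*$; this is where the theorem of the square enters. Once one checks --- on function fields, by Gersten's principle --- that this map becomes an isomorphism after applying $\tau_{\le 1}$ to its source, the inclusion $\tau_{\le 1}DA\to DA$ acquires a retraction, hence is a split monomorphism, and the direct sum decomposition follows with no Hom-vanishing statement at all. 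Note finally that even your ``easy half'' is underestimated: with the correct indexing ($A^*$ sits in degree $0$) you must also prove $\sH^1(DA)=0$, i.e.\ $\Ext^2_\NST(A_K,\bG_m)=0$ for function fields $K$; the paper gets this from the change-of-topology spectral sequence relating $\NST$ and $\EST$ (using Hilbert 90) together with the Barbieri-Viale--Kahn theorem that $\Ext^2$ between $1$-motivic \'etale sheaves vanishes --- an input absent from your sketch, and not a consequence of Barsotti--Weil plus representability.
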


\begin{proof} Note that \eqref{eq3.4} holds for any smooth projective variety $Y$, if we replace $C_Y$ by $D (M(Y))$. We shall take $Y=A$ and $Y=A\times A$. Let $p_1,p_2,m:A\times A\to A$ be respectively the first and second projection and the multiplication map. The composition
\[M(A\times A)\by{(p_1)_*+(p_2)_*-m_*} M(A)\by{a_A} \sA_A\]
is $0$. One characterisation of $\Pic^0_{A/F}\subset \Pic_{A/F}$ is as the kernel of $(p_1)^*+(p_2)^*-m^*$ (e.g. \cite[\S\ before Rk. 9.3]{milne}). Therefore, the composition
\[D A\to D \sA_A \by{D(a_A)}D (M(A)) \by{\eqref{eq3.4}} \Pic_{A/F}\]
induces a morphism
\begin{equation}\label{eq6.6}
D A\to \Pic^0_{A/F} = A^*.
\end{equation}

Here we used the canonical splitting of the extension
\[
0\to A\to \sA_A\to \Z\to 0
\]
given by the choice of the origin $0\in A$. In view of the exact triangle
\[\tau_{\le 1} DA \to DA\to \tau_{\ge 2} DA\by{+1},\]
to prove the lemma we have to show that \eqref{eq6.6} becomes an isomorphism after applying the truncation $\tau_{\le 1}$ to its left hand side.

For this, we may evaluate on smooth $k$-varieties, or even on their function fields $K$ by ``Gersten's principle'' \cite[\S 2.4]{bar-kahn}. For such $K$, we have to show that the homomorphism
\[\Ext^{1+i}_\NST(A_K,\bG_m)\to H^i_\Nis(K,A^*) \]
is an isomorphism for $i\le 1$. This is clear for $i<-1$. For $i=-1,0,1$, let $\EST$ be the category of \'etale sheaves with transfers of \cite[Lect. 6]{mvw}, and $\ES$ the category of sheaves of abelian groups on $\Sm_\et$, so that we have exact functors
\[\NST\by{\alpha^*} \EST \by{\omega} \ES\]
where $\alpha^*$ is \'etale sheafification and $\omega$ is  ``forgetting transfers''. If $\alpha_*$ denotes the right adjoint of $\alpha^*$, the hyperext spectral sequence
\[E_2^{p,q}=\Ext^p_\NST(A_K,R^q\alpha_*\alpha^* \bG_m)\Rightarrow \Ext^{p+q}_\EST(\alpha^*A_K,\alpha^*\bG_m)\]
and the vanishing of $R^1\alpha_*\alpha^*\bG_m$ (Hilbert 90!) yield isomorphisms
\[\Ext^{1+i}_\NST(A_K,\bG_m)\iso \Ext^{1+i}_\EST(\alpha^*A_K,\alpha^*\bG_m), \quad i\le 0 \]
and an injection
\[\Ext^{2}_\NST(A_K,\bG_m)\inj \Ext^{2}_\EST(\alpha^*A_K,\alpha^*\bG_m). \]

Finally, by \cite[Th. 3.14.2 a)]{bar-kahn}, we have an isomorphism
\[\Ext^i_\EST(\sF_K,\sG)\iso \Ext^i_\ES(\omega \sF_K, \omega\sG)\]
when $\sF,\sG\in \EST$ are ``$1$-motivic'', e.g. $\sF=\alpha^*A,\sG=\alpha^*\bG_m$; moreover, these groups vanish for $i\ge 2$. Lemma \ref{l6.4} now follows from the obvious vanishing of $H^1_\Nis(K,A^*)$, the vanishing of $\Hom_\ES(A_K,\bG_m)$ and the isomorphism
\[\Ext^1_\ES(A_K,\bG_m)\iso A^*(K)\]
deduced from  the Weil-Barsotti formula. 
%
%
%
\end{proof}

\enlargethispage*{30pt}

\begin{proof}[Proof of Proposition \ref{p6.1}] Let $A=\sA^0_{X/F}$ be the Albanese variety of $X$. Lemma  \ref{l6.4} yields an isomorphism
\[D^2A\simeq A\oplus \tau_{\ge 2} DA^*\oplus D(\tau_{\ge 2} DA)\]
hence a split exact triangle
\[\sA_X \by{\epsilon_{\sA_X}}D^2\sA_X \to \tau_{\ge 2} DA^*\oplus D(\tau_{\ge 2} DA)\by{+1}.\]

Let now $M^0(X)$ be the reduced motive of $X$, sitting in the (split) exact triangle $M^0(X)\to M(X)\to \Z\by{+1}$, as in the proof of Lemma \ref{l3.2} c). The map $a_X$ induces a map $a_X^0:M^0(X)\to A$, hence a dual map
\[D(a_X^0):A^*\oplus \tau_{\ge 2} DA\simeq DA\to DM^0(X)\simeq \Pic_{X/F}\]
where the left (resp. right) hand isomorphism follows from Lemma \ref{l6.4} (resp. from \eqref{eq3.4}). By construction, $D(a_X^0)$ restricts to the isomorphism $A^*\iso \Pic^0_{X/F}$. Dualising the resulting exact triangle $A^*\to D M^0(X)\to \NS_X\by{+1}$ and reusing  Lemma \ref{l6.4}, we get an exact triangle
\[\NS_X^*[1]\to D^2M^0(X)\to A\oplus \tau_{\ge 2} D A^*\by{+1}\]
where $\NS_X^*$ is the Cartier dual of $\NS_X$. It follows that 
\[H^0(k,D^2M^0(X))=A(k)\] and therefore that $H^0(k, D^2M(X))=\sA_X(k)$, the map induced by $D^2(a_X)$ being the canonical injection. We thus get the requested diagram, with $Q=H^0(k,D(\tau_{\ge 2} DA))$.
%
%
%
\end{proof}

\section{Proof of Theorem \ref{t6.1}}

Instead of Lewis' idea to use the complex Abel-Jacobi map, we use the $l$-adic Abel-Jacobi map in order to cover the case of arbitrary characteristic.

 We may find a regular $\Z$-algebra $R$ of finite type, a homomorphism $R\to F$,  and a smooth projective scheme $p:\sX\to \Spec R$, such that $X=\sX\otimes_R F$. By a direct limit argument, it suffices to show the theorem when $F$ is the algebraic closure of the quotient field of $R$ and, moreover, to show that the composition
\[H^1(\sX,\sK_2)\to H^1_\ind(X,\sK_2)\by{\bar \delta} \Hom(\Griff_1(X),F^*)\]
has image killed by $e$.

Let $l$ be a prime number different from $\car F$. We may assume that $l$ is invertible in $R$. We have $l$-adic regulator maps
\[H^1(\sX,\sK_2)\by{c} H^3_\et(\sX,\Z_l(2)),\quad H^{d-1}(\sX,\sK_{d-1})\by{c'} H^{2d-2}_\et(\sX,\Z_l(d-1))\]
and two compatible pairings
\begin{multline}\label{eq6.2}
H^1(\sX,\sK_2)\times H^{d-1}(\sX,\sK_{d-1})\to H^{d}(\sX,\sK_{d+1})\\
\by{p_*} H^0(R,\sK_1)= R^*
\end{multline}
\begin{multline}\label{eq6.3}
H^3_\et(\sX,\Z_l(2))\times H^{2d-2}_\et(\sX,\Z_l(d-1))\to H^{2d+1}_\et(\sX,\Z_l(d+1))\\
\by{p_*} H^1_\et(R,\Z_l(1)).
\end{multline}


The Leray spectral sequence for the projection $p$ yields a filtration $F^rH^*_\et(\sX,\Z_l(\bullet))$ on the $l$-adic cohomology of $\sX$.

Let $H^{d-1}(\sX,\sK_{d-1})_0={c'}^{-1}(F^1H^{2d-2}_\et(\sX,\Z_l(d-1)))$ and $H^1(\sX,\sK_2)_0=c^{-1}(F^1H^3_\et(\sX,\Z_l(2)))$.

\begin{lemma}\label{l6.2} The restriction of \eqref{eq6.2} to $H^1(\sX,\sK_2)_0\times H^{d-1}(\sX,\sK_{d-1})_0$ has image in $R^*\{l'\}$, the subgroup of $R^*$ of torsion prime to $l$.
\end{lemma}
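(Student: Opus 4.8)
The plan is to analyze Lemma \ref{l6.2} through the interplay between the $K$-theoretic pairing \eqref{eq6.2} and its $l$-adic counterpart \eqref{eq6.3}, exploiting the compatibility of the regulator maps $c,c'$ with these pairings. The key structural fact I would use is that the Leray filtration $F^\bullet$ is multiplicative for cup product: the pairing \eqref{eq6.3} sends $F^1 \times F^1$ into $F^2 H^{2d+1}_\et(\sX,\Z_l(d+1))$. First I would track what this buys us on the target. The target of \eqref{eq6.3} is $H^1_\et(R,\Z_l(1))$, which sits in filtration degree $\le 1$ of the Leray spectral sequence for $p$ (it is the image under $p_*$, and $p_*$ shifts cohomological degree by $-2d$ while lowering the filtration); so the contribution of $F^2$ to a class that ultimately lands in $H^1_\et(R,\Z_l(1))$ must vanish. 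More precisely, I expect the composite $p_* \circ (\text{cup})$ restricted to $F^1\times F^1$ to factor through the associated graded in a way that forces the image into $F^2$, which maps to zero in the relevant quotient computing $H^1_\et(R,\Z_l(1))$.

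Second, I would transfer this vanishing back to the $K$-theory side. By the compatibility of the two pairings via the regulators (the hypothesis that \eqref{eq6.2} and \eqref{eq6.3} are ``two compatible pairings''), for $\xi \in H^1(\sX,\sK_2)_0$ and $\eta \in H^{d-1}(\sX,\sK_{d-1})_0$ the regulator of $p_*(\xi\cdot\eta) \in R^*$ in $H^1_\et(R,\Z_l(1))$ equals $p_*(c(\xi)\cdot c'(\eta))$, which is zero by the first step since $c(\xi),c'(\eta)$ lie in $F^1$. Thus $p_*(\xi\cdot\eta)$ lies in the kernel of the $l$-adic regulator $R^* \to H^1_\et(R,\Z_l(1))$. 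The final task is to identify this kernel: since $H^1_\et(R,\Z_l(1)) = \lim R^*/l^n$ by Kummer theory (using $l$ invertible in $R$), the kernel of $R^* \to \lim R^*/l^n$ is exactly the $l$-divisible part, and because $R$ is a regular finitely generated $\Z$-algebra, $R^*$ is finitely generated modulo its torsion; hence the $l$-divisible subgroup reduces to $R^*\{l'\}$, the prime-to-$l$ torsion. This gives the claimed image in $R^*\{l'\}$.

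I would expect the main obstacle to be making the filtration bookkeeping in the first step fully rigorous, namely verifying that $p_*$ of a cup product of two $F^1$-classes genuinely lands in the part of $R^*$ killed by the regulator, rather than merely in some higher filtration stage whose relation to the kernel of $c$ is unclear. The subtlety is that $p_*:H^{2d+1}_\et(\sX,\Z_l(d+1)) \to H^1_\et(R,\Z_l(1))$ is the edge map picking out the top fibral degree $2d$, and one must check that $F^1 \cup F^1 \subseteq F^2$ pushes forward to zero precisely because the $E_\infty^{1,2d}$ term receiving $R^*$-classes cannot be reached from $F^2 H^{2d+1}$. I would handle this by writing out the Leray spectral sequence for $p$ explicitly in the range of bidegrees $(r,s)$ with $r+s = 2d+1$ and confirming that the multiplicative structure forces $F^1\cdot F^1$ into $F^2 = \bigoplus_{r\ge 2} E_\infty^{r,\bullet}$, while $p_*$ isolates the $r=1$ component; the definitions of $H^1(\sX,\sK_2)_0$ and $H^{d-1}(\sX,\sK_{d-1})_0$ as preimages of $F^1$ are tailored exactly to trigger this. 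The last identification of $\ker(R^* \to H^1_\et(R,\Z_l(1)))$ with $R^*\{l'\}$ is routine given finite generation of $R^*/\mathrm{tors}$, so I do not anticipate difficulty there.
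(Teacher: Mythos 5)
Your proposal is correct and follows essentially the same route as the paper: reduce via the compatibility of \eqref{eq6.2} and \eqref{eq6.3} to showing the $l$-adic pairing vanishes on $F^1\times F^1$, use multiplicativity of the Leray filtration to land in $F^2$, observe that $p_*$ is the edge map onto the $E^{1,2d}$-graded piece (so kills $F^2$), and identify $\Ker\left(R^*\to H^1_\et(R,\Z_l(1))\right)$ with $R^*\{l'\}$ via Kummer theory and finite generation of $R^*$. The only cosmetic difference is that the paper performs the kernel identification first and phrases the edge-map step through the vanishing $H^0_\et(R,H^{2d+1}_\et(X,\Z_l(d+1)))=0$, i.e.\ $H^{2d+1}_\et(\sX,\Z_l(d+1))=F^1$, which is exactly the bookkeeping you flagged as the point to verify.
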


\begin{proof} Since $R$ is a finitely generated $\Z$-algebra, its group of units $R^*$ is a finitely generated $\Z$-module, hence the map $R^*\otimes\Z_l\to H^1_\et(R,\Z_l(1))$ from Kummer theory is injective; therefore the induced map $R^*\to H^1_\et(R,\Z_l(1))$ has finite kernel of cardinality prime to $l$. It therefore suffices to show that the restriction of \eqref{eq6.3} to 
\[F^1H^3_\et(\sX,\Z_l(2))\times F^1H^{2d-2}_\et(\sX,\Z_l(d-1))\]
is $0$. By multiplicativity of the Leray spectral sequences, it suffices to show that $p_*(F^2 H^{2d+1}_\et(\sX,\Z_l(d+1)))=0$.

Since $\dim X=d$, we have $H^0_\et(R,H^{2d+1}_\et(X,\Z_l(d+1))=0$ and hence $H^{2d+1}_\et(\sX,\Z_l(d+1)))=F^1 H^{2d+1}_\et(\sX,\Z_l(d+1)))$. The edge map
\[F^1 H^{2d+1}_\et(\sX,\Z_l(d+1)))\to H^1_\et(R,H^{2d}_\et(X,\Z_l(d+1)))\]
coincides with the map $p_*$ of \eqref{eq6.3} via the isomorphism
\[H^{2d}_\et(X,\Z_l(d+1))\by{p_*} H^0_\et(F,\Z_l(1))=\Z_l(1).\]

This concludes the proof.
\end{proof}

Passing to the $\colim$ in Lemma \ref{l6.2}, we find that the pairing
\[H^1(X,\sK_2)_0\times CH^{d-1}(X)_0\to F^*\]
has image in $F^*\{l'\}$.

\begin{lemma}\label{l6.1} The group $H^1(X,\sK_2)/H^1(X,\sK_2)_0$ is finite of exponent dividing $e_l$.
\end{lemma}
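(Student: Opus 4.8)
The goal is to control the finite group $H^1(X,\sK_2)/H^1(X,\sK_2)_0$ using the $l$-adic regulator map $c:H^1(X,\sK_2)\to H^3_\et(X,\Z_l(2))$ and its interaction with the Leray filtration $F^\bullet$ coming from the projection $p:\sX\to \Spec R$. By definition $H^1(X,\sK_2)_0 = c^{-1}(F^1 H^3_\et(X,\Z_l(2)))$, so the quotient $H^1(X,\sK_2)/H^1(X,\sK_2)_0$ injects into the quotient $H^3_\et(X,\Z_l(2))/F^1$, which is the bottom graded piece of the Leray filtration, namely (a subgroup of) $H^0_\et(F,H^3_\et(X,\Z_l(2)))$. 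Since $F$ is algebraically closed, Galois descent is trivial and this is just $H^3_\et(X,\Z_l(2))$ itself, so the plan is really to understand the image of $c$ modulo $F^1$ inside $H^3_\et(X,\Z_l(2))$.

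First I would make precise that the composite $H^1(X,\sK_2)\to H^3_\et(X,\Z_l(2))\to \operatorname{gr}^0_F = H^0_\et(F,H^3_\et(X_{\bar F},\Z_l(2)))$ lands in the torsion subgroup. The reason is a weight or Hodge-theoretic argument on the graded piece: $\operatorname{gr}^0$ is the part of $H^3_\et$ that survives over the algebraically closed base, and the regulator image of an indecomposable $\sK_2$-class is constrained to the torsion of this group. The torsion subgroup of $H^3_\et(X,\Z_l(2))$ has exponent dividing $e_l$ by the very definition of $e_l$ as the exponent of the torsion of $H^3(X,\Z_l)$ (up to the harmless Tate twist). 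Thus multiplication by $e_l$ kills the image of the quotient $H^1(X,\sK_2)/H^1(X,\sK_2)_0$ in $\operatorname{gr}^0_F$.

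The remaining point is to show this quotient is actually finite, not merely of bounded exponent. For this I would combine two facts: that $H^1(X,\sK_2)/H^1(X,\sK_2)_0$ has exponent dividing $e_l$ (from the previous paragraph, since it embeds in the $e_l$-torsion of $\operatorname{gr}^0_F$), together with the fact that this quotient is a subquotient of a finitely generated $\Z_l$-module — or more directly, that the torsion subgroup of $H^3_\et(X,\Z_l(2))$ is itself finite. A group of finite exponent which embeds in a finite group is finite, so finiteness follows once the target torsion group is known to be finite, which it is: $H^3_\et(X,\Z_l)$ is a finitely generated $\Z_l$-module, hence its torsion is a finite abelian $l$-group of exponent $e_l$.

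The main obstacle is the first step: rigorously pinning down that $c$ followed by projection to $\operatorname{gr}^0_F$ has torsion image, and identifying the relevant torsion with $H^3_\et(X,\Z_l)_\tors$ so that its exponent is exactly $e_l$. This is where the geometry enters — one must use that over the algebraically closed field $F$ the image of the $\sK_2$-regulator in the lowest weight piece of $H^3_\et$ is torsion (morally because indecomposable $\sK_2$-classes pair trivially against the ``free'' part detected by cycle classes), and that the Tate twist does not alter the exponent of torsion. Once that constraint is in hand, the finiteness and the exponent bound are formal. I would expect the weight/torsion input to rest on the comparison between the regulator and the Abel--Jacobi description of $H^1_\ind(X,\sK_2)$, consistent with the remark following Theorem \ref{t3.1} that the torsion of $H^1_\ind(X,\sK_2)$ is (a twist of) the prime-to-$p$ unramified Brauer group.
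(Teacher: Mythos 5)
Your reduction is the same one the paper uses: since $H^1(X,\sK_2)_0=c^{-1}(F^1H^3_\et(X,\Z_l(2)))$, the quotient $H^1(X,\sK_2)/H^1(X,\sK_2)_0$ is isomorphic to $\IM(c)/(\IM(c)\cap F^1)$, so everything comes down to showing that $\IM(c)$ is torsion, equivalently finite; it then lies in $H^3_\et(X,\Z_l(2))_\tors$, whose exponent divides $e_l$ (the Tate twist does not change the underlying abelian group), and both the finiteness and the exponent bound for the quotient follow formally. The genuine gap is that you never establish this key claim. You flag it yourself as ``the main obstacle'' and offer only heuristics (a ``weight or Hodge-theoretic argument'', the slogan that indecomposable classes ``pair trivially against the free part detected by cycle classes''). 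But this claim \emph{is} the mathematical content of the lemma, and it is not formal: it is a theorem of Colliot-Th\'el\`ene and Raskind \cite[Th. 2.2]{ctr}, whose proof rests on the Merkurjev--Suslin theorem (via divisibility properties of $H^1(X,\sK_2)$ over a separably closed field, the point being that a divisible group can only map to zero in a finitely generated $\Z_l$-module). The paper's entire proof consists of invoking this reference; without it, or an equivalent input, your argument is an unproven assertion wrapped in a correct formal reduction.

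Be aware also that the heuristic cannot be patched cheaply. In characteristic zero there is no Frobenius, so no weight argument is directly available over the algebraically closed field $F$; in characteristic $p$ one can try to spread out to a finite field and invoke Weil II, but organizing this correctly (comparing the regulator with the Hochschild--Serre/Leray filtrations and handling torsion) is essentially a reproof of the cited theorem, not a shortcut. Likewise, the remark after Theorem \ref{t3.1} that the torsion of $H^1_\ind(X,\sK_2)$ is a twist of the Brauer group is itself a consequence of the same Merkurjev--Suslin circle of ideas, and it concerns the indecomposable quotient rather than $H^1(X,\sK_2)$, so it cannot serve as the foundation of the argument. A minor further slip: the Leray filtration here lives on the cohomology of the model $\sX$ over $\Spec R$ (and is transported to $X$ by a colimit over models), so its bottom graded piece injects into $H^0_\et(R,R^3p_*\Z_l(2))$, not into ``$H^0_\et(F,\cdot)$''; this is harmless for your reduction but should be stated correctly.
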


\begin{proof} It suffices to observe that the regulator map
\[H^1(X,\sK_2)\to H^3_\et(X,\Z_l(2))\]
has finite image \cite[Th. 2.2]{ctr}.
\end{proof}

Lemmas \ref{l6.2} and \ref{l6.1} show that the pairing $H^1(X,\sK_2)\times CH^{d-1}(X)\to F^*$ has image in a group of  roots of unity whose $l$-primary component is finite of exponent $e_l$ for all primes $l\ne \car F$. This completes the proof of Theorem \ref{t6.1}.

\section{Questions and remarks}

(1) Does the conclusion of Proposition \ref{p3.2} remain true when $\dim X>2$?

(2) Can one give an \emph{a priori}, concrete, description of the extension in Theorem \ref{t2.1} (iii)?

(3) It is known that $\Griff_1(X)\otimes \Q$ (\emph{resp} $\Griff_1(X)/l$ for some primes $l$) may be nonzero for some $3$-folds $X$ \cite{griffiths,bl-es}; these groups may not even be finite dimensional, \emph{e.g.} \cite{clemens,schoen}. Can one find examples for  which $\Hom(\Griff_1(X),\Z)\ne 0$? 

(4) To put the previous question in a wider context, let $A$ be a torsion-free abelian group. Replacing $\bG_m$ by $\sF=\bG_m\otimes A$ in Theorem \ref{t2.1} yields the following computation (with same proofs):
\begin{thlist}
\item $R^0_\nr\sF(X) =F^*\otimes A$.
\item $R^1_\nr\sF(X)\iso \Pic^\tau(X)\otimes A$.
\item There is a short exact sequence
\begin{equation}\label{eq6.1}
0\to D^1(X)\otimes A\to R^2_\nr\sF(X)\to \Hom(\Griff_1(X),A)\to 0.
\end{equation} 
\end{thlist}

Taking $A=\Q$ we get examples, from the nontriviality of $\Griff_1(X)\otimes \Q$, where $R^2_\nr\sF(X)$ is not reduced to $D^1(X)\otimes A$. But, choosing $X$ such that $\Griff_1(X)\otimes \Q$ is not finite dimensional and varying $A$ among $\Q$-vector spaces, \eqref{eq6.1} also shows that \emph{the functor $\sF\mapsto R^2_\nr \sF$ does not commute with infinite direct sums}. (Therefore $R_\nr$ cannot have a right adjoint.) This is all the more striking as $R^0_\nr$ does commute with infinite direct sums, which is clear from Formula \eqref{eq1} in the introduction.

We don't know whether $R^1_\nr$ commutes with infinite direct sums or not.

\end{document}